\def\eea{\end{eqnarray*}}
\newtheorem{thm}{Theorem}[section]
\newtheorem{prop}[thm]{Proposition}
\newtheorem{cor}[thm]{Corollary}
\newtheorem{lemma}[thm]{Lemma}
\newtheorem{remark}[thm]{Remark}
\begin{document}

\title[4-d gradient Ricci solitons with harmonic Weyl curvature]{On a classification of 4-d gradient Ricci solitons with harmonic Weyl curvature}

\author{Jongsu Kim}

\date{\today}

\address{Dept. of Mathematics, Sogang University, Seoul, Korea}
\email{jskim@sogang.ac.kr}

\thanks{This work was supported by the National Research Foundation of Korea(NRF) grant funded by the Korea government(MOE) (No.NRF-2010-0011704)}

\keywords{gradient Ricci soliton,  harmonic Weyl tensor, Codazzi tensor}

\subjclass[2010]{53C21, 53C25}

\begin{abstract}
We study a characterization of 4-dimensional (not necessarily complete) gradient Ricci solitons $(M, g, f)$ which have harmonic Weyl curvature, i.e. $\delta W=0$. Roughly speaking, we prove that the soliton metric $g$  is locally isometric to  one of the following four types: an Einstein metric, the product $ \mathbb{R}^2 \times N_{\lambda}$ of the Euclidean metric and a 2-d Riemannian manifold of constant curvature ${\lambda} \neq  0$, a certain singular metric and a locally conformally flat metric.
The method here is motivated by Cao-Chen's works \cite{CC1, CC2} and  Derdzi\'{n}ski's study on Codazzi tensors \cite{De}.

Combined with the previous results on locally conformally flat solitons, our characterization yields a new classification of 4-d complete steady solitons with $\delta W=0$. For shrinking case, it reproves the rigidity result \cite{FG, MS} in 4-d.  It also helps to understand the expanding case; we now understand all 4-d non-conformally-flat ones with $\delta W=0$.
We also characterize {\it locally} 4-d (not necessarily complete) gradient Ricci solitons with harmonic curvature.
\end{abstract}

\maketitle

\setcounter{section}{0}
\setcounter{equation}{0}

\section{Introduction}

  A gradient Ricci soliton consists of a Riemannian manifold $(M, g)$ and a smooth function $f$ satisfying
$\nabla d f =  -Rc + \lambda g $, where $Rc$ denotes the Ricci tensor of $g$ and $\lambda$ is a constant. Gradient Ricci solitons are essential in Hamilton's Ricci flow theory as
singularity models of the flow. So it is important to understand their geometry and classify them.
 A gradient Ricci soliton is said to be shrinking, steady or expanding if $\lambda$ is positive, zero or negative, respectively.

\smallskip

Two dimensional gradient Ricci solitons are well understood; see \cite{BM} and references therein.
Any 3-d complete noncompact non-flat shrinker (shrinking Ricci soliton) is proved to be a quotient of the round cylinder $\mathbb{S}^2 \times \mathbb{R}$ in \cite{CCZ}; see also \cite{Iv3, NW, P}.
  For the 3-d  gradient steadiers (steady Ricci solitons), one may refer to \cite{Br, Cao1} and references therein.

\smallskip
In higher dimension, there are numerous rigidity and classification results under various geometric conditions. For the relevance to the current work, we shall focus on locally conformally flat solitons and its generalizations.

Complete locally conformally flat gradient shrinkers are classified to be a finite quotient of $\mathbb{R}^n$, $\mathbb{S}^n$, or $\mathbb{S}^{n-1} \times \mathbb{R}$, $n \geq  4$,
in \cite{CWZ, PW2,Z}; see also \cite{ELM, NW}.
Complete locally conformally flat gradient steadiers are  classified to be either flat or isometric to the Bryant soliton \cite{CC1, CM}.
The 4-d half conformally flat steadiers and shrinkers are studied in \cite{CW}. More generally, Bach-flat shrinkers  are classified in \cite{CC2}  and Bach-flat  steadiers with positive Ricci curvature  in \cite{CCCMM}.

\smallskip
A gradient soliton is said to be rigid if it is isometric to a
quotient of $N \times \mathbb{R}^k$ where $N$ is an Einstein manifold and $f = \frac{\lambda}{2}
|x|^2$ on the Euclidean factor.
 Fern\'{a}ndez-L\'{o}pez and Garc\'{i}a-R\'{i}o  \cite{FG} showed that an n-dimensional compact Ricci soliton $(M, g)$ is rigid
if an only if it has harmonic Weyl tensor $W$.
Then Munteanu and Sesum \cite{MS} proved that any $n$-dimensional complete gradient shrinker with
harmonic Weyl tensor is rigid.
In \cite{WWW}, Wu, Wu and Wylie showed that 4-d complete gradient shrinker with   $\delta W^{+}=0$ is
either Einstein, or a finite quotient of
$\mathbb{S}^3 \times \mathbb{R}$, $\mathbb{S}^2 \times \mathbb{R}^2$ or $\mathbb{R}^4$.

\medskip
The purpose of this article is to study  4-dimensional gradient Ricci solitons $(M, g, f)$ which have harmonic Weyl curvature. This work is most related to the above-mentioned works on locally confomally flat solitons and
to \cite{ MS} on shrinking solitons with $\delta W=0$. The latter needs control on geometric decay of curvature and volume from shrinker condition, while the former resorts to the nonnegative curvedness of metrics for locally confomally flat shrinking or steady solitons, which is proved in \cite{Ch, Z}.

As our study includes steady and expanding solitons  with $\delta W=0$, we can use neither geometric decay nor nonnegative curvedness. This work takes a different approach and is inspired by Cao and Chen's works \cite{CC1, CC2} and Derdzi\'{n}ski's \cite{De}.  Note that the harmonicity of Weyl tensor provides a Codazzi tensor $Rc- \frac{R}{6}g$. Riemannian metrics with a Codazzi tensor which have more than two distinct eigenvalue functions of Ricci tensor have been little understood, see Chapter 16 of \cite{Be}. In this article, combining with the soliton condition we managed to analyze in detail
the Codazzi tensor with three and four distinct eigenvalues.

Our argument is mostly local and produces a {\it local} description of soliton metrics and potential functions.  So far we worked out only in four dimension, but we hope that our perspective might provide some way to understand higher dimensional case.

The main theorem of this paper is as follows;
\begin{thm} \label{local}
Any four dimensional (not necessarily complete) connected gradient Ricci soliton  $(M, g, f)$  with harmonic Weyl curvature is one of the following four types.

\medskip
{\rm (i)} $g$ is an Einstein metric with $f$ a constant function.

\medskip
{\rm (ii)} For each point $p \in M$,   there exists a neighborhood $V$ of $p$
such that
 $(V, g)$ is isometric to a domain in the product $ \mathbb{R}^2 \times N_{\lambda}$
where $ \mathbb{R}^2$ has the Euclidean metric and $N_{\lambda}$ is a 2-dimensional Riemannian manifold of constant curvature ${\lambda} \neq  0$.   And $f = \frac{\lambda}{2} |x|^2$  modulo a constant on the Euclidean factor.

\medskip
{\rm (iii)} For each point $p \in M$,   there exists a neighborhood $V$ of $p$ with coordinates $(s,t, x_3, x_4)$ such that
$(V,g)$ is isometric to a domain in $\mathbb{R}^4 \setminus \{ s=0 \}$ with the Riemannian metric
 $ds^2 + s^{\frac{2}{3}} dt^2+ s^{\frac{4}{3}} \tilde{g}$,  where $ \tilde{g}$ is the Euclidean metric on the $(x_3, x_4)$-plane. Also,  $\lambda=0$ and $f=\frac{2}{3} \ln (s)$ modulo a constant.

\medskip

{\rm (iv)}
For each point $p$ in an open dense subset of $M$, there exists a neighborhood $V$ of $p$ with coordinates $(s,t, x_3, x_4)$ such that
$(V,g)$ is isometric to a domain in $\mathbb{R} \times W^3$ with the warped product metric $ ds^2 +    h(s)^2 \tilde{g},$
where $\tilde{g}$ is a constant curvature metric on a 3-manifold $W^3$ and $f$ is not constant.  And $g$ is locally conformally flat.

\end{thm}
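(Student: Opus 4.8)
\emph{Strategy and Step 1 (algebraic reduction).} The plan is to extract a Codazzi tensor from $\delta W=0$, combine it with the gradient soliton identities to force $\nabla f$ to be a Ricci eigenvector, and then classify by running Derdzi\'nski's local structure theory for Codazzi tensors against the number of distinct eigenvalues of the Ricci tensor. In dimension four, $\delta W = 0$ is equivalent to vanishing of the Cotton tensor, i.e.\ to $D := Rc - \tfrac{R}{6}g$ being a Codazzi tensor, $(\nabla_X D)(Y,Z) = (\nabla_Y D)(X,Z)$. I would first record the standard soliton identities coming from $\nabla d f = -Rc + \lambda g$, namely $\Delta f = 4\lambda - R$, $\nabla R = 2\,Rc(\nabla f)$, $R + |\nabla f|^2 - 2\lambda f = \mathrm{const}$, and $\nabla_i R_{jk} - \nabla_j R_{ik} = R_{ijk\ell}\nabla^\ell f$. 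If $\nabla f \equiv 0$ then $g$ is Einstein with $f$ constant, which is type (i); so assume $\nabla f$ is not identically zero and work on the dense open set $\{\nabla f \ne 0\}$. Substituting the Codazzi equation for $D$ into the last identity yields $R_{ijk\ell}\nabla^\ell f = \tfrac16(\nabla_i R\, g_{jk} - \nabla_j R\, g_{ik})$; contracting with $\nabla^k f$ and using the antisymmetry of $R_{ijk\ell}$ in its last two indices forces $\nabla R$ to be parallel to $\nabla f$, whence $\nabla R = 2\,Rc(\nabla f)$ shows $\nabla f$ is an eigenvector of $Rc$, hence of $D$. Re-substituting, one obtains two further structural facts: every $2$-plane containing $\nabla f$ has the same sectional curvature, and $R(X,Y,Z,\nabla f) = 0$ whenever $X,Y,Z \perp \nabla f$. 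A short Bochner-type computation then gives that $|\nabla f|$ is constant on the level sets of $f$ and that the $\nabla f$-flow lines are reparametrized geodesics meeting the level hypersurfaces $\Sigma_c = f^{-1}(c)$ orthogonally, so locally $g = ds^2 + g_s$ with the $\Sigma_c$ as slices; the soliton equation shows that the shape operator of each $\Sigma_c$ is diagonalized by the Ricci eigenframe, with principal curvatures $(\lambda - \rho_i)/|\nabla f|$, the $\rho_i$ being the Ricci eigenvalues.

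\emph{Step 2 (Derdzi\'nski's machinery; $q\le 2$).} On the open dense subset where the number $q$ of distinct eigenvalues of $D$ is locally constant, the eigendistributions are smooth, and Derdzi\'nski's lemmas apply: any eigendistribution of multiplicity $\ge 2$ is integrable with totally umbilic leaves along which its eigenvalue is constant, and the Levi-Civita connection coefficients in an adapted eigenframe are determined by the gradients of the eigenvalue functions (e.g.\ $\langle \nabla_{E_a} E_a, E_b\rangle = E_b(\lambda_a)/(\lambda_a - \lambda_b)$ for distinct eigenvalues). One then argues by cases on $q$ and on the multiplicity of the eigenspace containing $\nabla f$. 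When $q = 1$, $D$ is pointwise a multiple of $g$, so $g$ is Einstein; then either $f$ is constant (type (i)) or $g$ is flat, written as $dr^2 + r^2 g_{\mathbb{S}^3}$ with $f$ quadratic in $r$, a degenerate instance of type (iv). When $q = 2$ with multiplicities $(2,2)$ and $\nabla f$ in one of the two-dimensional eigenspaces $V$: using $\nabla R \parallel \nabla f$ one shows the eigenvalue of the complementary distribution is controlled, and restricting the soliton equation to a leaf $L$ of $V$ one gets that $\mathrm{Hess}_{L}(f|_{L})$ is a multiple of the induced metric, which forces $L$ to be a flat surface; the metric splits locally as $\mathbb{R}^2 \times N$, and the soliton condition on the $N$-factor reads $-Rc_N + \lambda g_N = 0$, so $N$ has constant curvature $\lambda$ --- type (ii) (or flat, hence type (iv), when $\lambda = 0$).

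\emph{Step 3 (the remaining cases, exclusions, globalization).} When $q = 2$ with multiplicities $(1,3)$ and $\nabla f$ in the one-dimensional eigenspace, the level hypersurfaces of $f$ are exactly the umbilic leaves of the three-dimensional distribution, so $g = ds^2 + h(s)^2 \tilde g$ is a warped product over an interval; using $\delta W = 0$ together with the soliton equation one checks that $\tilde g$ is Einstein, hence of constant curvature, so $W \equiv 0$ --- type (iv). When $q = 3$, the multiplicities are $(1,1,2)$, the two-dimensional eigendistribution has umbilic leaves with constant eigenvalue, and $\nabla f$ sits in one of the simple eigenspaces; feeding Derdzi\'nski's connection formulas, the constraint $R + |\nabla f|^2 - 2\lambda f = \mathrm{const}$, and $\nabla R = 2\,Rc(\nabla f)$ into the system of ODEs satisfied by the three eigenvalue functions along the $\nabla f$-flow, the system should close and integrate to force $\lambda = 0$ and the specific warping exponents, producing the singular model $ds^2 + s^{2/3} dt^2 + s^{4/3}\tilde g$ with $f = \tfrac23\ln s$ of type (iii). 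It then remains to rule out the configurations absent from the list --- the case $q = 4$, and the cases in which $\nabla f$ lies in an eigenspace of multiplicity $\ge 2$ within the $(1,3)$ or $(1,1,2)$ patterns --- by showing that Derdzi\'nski's connection formulas, the vanishing $R(X,Y,Z,\nabla f) = 0$, and the soliton equation together over-determine the eigenvalue derivatives and become incompatible unless two eigenvalues coincide. Finally, since each local model (i)--(iv) is rigid, the locally defined type is constant on the connected open-dense regular set and, using real-analyticity of the soliton in harmonic coordinates across the exceptional strata where $\nabla f$ vanishes or the eigenvalue count drops, propagates to all of $M$, giving the stated classification.

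\emph{Main obstacle.} The genuinely hard points are the explicit integration in the $q = 3$ case --- solving the coupled eigenvalue ODEs to recover the metric in (iii) and, in particular, to force steadiness $\lambda = 0$ --- and the exclusion of $q = 4$, where one must wring a contradiction out of the interaction between the Codazzi identities for $D$, the non-umbilic geometry of the level hypersurfaces, and the soliton equation.
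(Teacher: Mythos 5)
Your outline follows essentially the same route as the paper: extract the Codazzi tensor $Rc-\tfrac{R}{6}g$ from $\delta W=0$, show $\nabla f$ is a Ricci eigenvector with $g=ds^2+g_s$ adapted to the level sets of $f$, run Derdzi\'nski's structure theory for Codazzi tensors, split into cases by the eigenvalue multiplicities, and globalize by real analyticity. The case bookkeeping matches after translation (your $(2,2)$ and part of $(1,1,2)$ are the paper's $\lambda_2\neq\lambda_3=\lambda_4$ analysis yielding types (ii) and (iii); your $(1,3)$ is the paper's $\lambda_2=\lambda_3=\lambda_4$ case yielding (iv); your $q=4$ exclusion is the paper's Proposition on pairwise distinct $\lambda_2,\lambda_3,\lambda_4$). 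However, there is a genuine gap at the foundation of your Steps 2--3: before you can speak of ``the system of ODEs satisfied by the eigenvalue functions along the $\nabla f$-flow,'' you must know that the Ricci eigenvalues are constant on the level hypersurfaces of $f$, i.e.\ functions of $s$ alone. Derdzi\'nski's lemma only controls the derivative $E_j(\lambda_i)$ in terms of connection coefficients for \emph{distinct} eigenvalues, and gives constancy along leaves only for eigenspaces of multiplicity $\ge 2$; for the simple eigenvalues occurring in your $(1,1,2)$ and $q=4$ patterns nothing in your Step 1 or Step 2 rules out tangential variation of $\lambda_i$ along $\Sigma_c$, and without that the reduction to ODEs in $s$ collapses. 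The paper devotes a separate, nontrivial argument to this (its Lemma 2.7): it shows $\mathrm{tr}(Rc^k)$ for $k=1,2,3$ depend only on $s$ by computing divergences such as $\nabla_k(f_iR_{ij}R_{jk})$ two ways, using the soliton equation together with the Codazzi identity, and then deduces each $\lambda_i=\lambda_i(s)$ from knowledge of $\lambda_1$ and the first three power sums. You would need to supply this step or an equivalent.

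Beyond that, the two places you flag as ``main obstacles'' are precisely where the remaining substance lies, and your proposal does not actually carry them out. For the exclusion of $q=4$ the paper writes the brackets $[E_i,E_j]$ of the tangential eigenframe with coefficients $\alpha,\beta,\gamma$, uses the Jacobi identity and the Codazzi relations to express $\alpha^2$ as an explicit rational function of the $\zeta_i$, and differentiates in $s$ to force $f'=0$, contradicting non-constancy of $f$; this is a specific computation, not a generic over-determination argument. For the $(1,1,2)$ integration the paper derives a factored polynomial identity $b(\lambda+3ab)(\lambda-2a^2+ab)=0$ in the principal curvatures $a=\zeta_2$, $b=\zeta_3$, disposes of each factor by separate ODE arguments, and crucially invokes real analyticity of $a,b,a',b'$ to pass from ``the product vanishes'' to ``one factor vanishes identically'' on a connected set --- a point your sketch does not anticipate. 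Finally, your globalization step needs more than ``each local model is rigid'': the paper proves a separate lemma that no two types coexist on a connected $M$ (via real-analytic invariants such as $|W|^2$ and $R$) and handles points of $M\setminus(M_{\mathcal A}\cap\{\nabla f\neq0\})$ for type (iii) by a path argument bounding the oscillation of $s$, showing the exceptional set is actually empty there. So the architecture is right, but as written the proof is incomplete at the key lemma and at both hard cases.
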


\bigskip

For 4-d {\it complete} shrinking soliton case, we reprove the rigidity result in \cite{FG, MS} by a distinct method. For 4-d complete steady case, with the result of \cite{CC1, CM} on locally conformally flat solitons, we obtain the following classification.

\begin{thm} \label{steady}
A 4-dimensional complete steady gradient Ricci soliton with $\delta W=0$,
is either Ricci flat, or isometric
to the Bryant Soliton.
\end{thm}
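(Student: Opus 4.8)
\medskip
\noindent\textbf{Proof strategy for Theorem \ref{steady}.} The plan is to plug the hypotheses ``complete'' and ``steady'' (so $\lambda=0$) into the local classification of Theorem~\ref{local} and eliminate, one by one, the cases that cannot occur on a complete steady metric, so that only the Ricci-flat and Bryant possibilities survive. So let $(M,g,f)$ be a $4$-dimensional complete steady gradient Ricci soliton with $\delta W=0$; by Theorem~\ref{local} it is of one of the four types (i)--(iv). Type (ii) is excluded at once: it is stated for a $2$-dimensional factor $N_\lambda$ of constant curvature $\lambda\neq 0$, while a steady soliton has $\lambda=0$.

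In type (i), $g$ is Einstein and $f$ is constant, so the soliton equation $\nabla df=-Rc+\lambda g$ collapses to $0=-Rc$; thus $g$ is Ricci flat. In type (iv), $g$ is locally conformally flat on an open dense subset $U\subseteq M$; since the Weyl tensor $W$ is a smooth tensor field vanishing on the dense set $U$, it vanishes identically on $M$, so $(M,g)$ is globally locally conformally flat. Being also complete and steady, it falls under the classification of complete locally conformally flat gradient steadiers recalled in the introduction, namely \cite{CC1, CM}, which says it is either flat (hence Ricci flat) or isometric to the Bryant soliton.

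The remaining task, and the one real point, is to rule out type (iii). There, every point of $M$ has a neighborhood on which $g=ds^2+s^{2/3}dt^2+s^{4/3}\tilde g$ with $\lambda=0$ and $f=\tfrac23\ln s$ modulo a constant. A direct computation of the scalar curvature of this warped-product metric gives $R=-\tfrac{4}{9s^2}$ (equivalently, since $\lambda=0$ forces $R=-\Delta f$, one checks $\Delta f=\tfrac{4}{9s^2}$), so $R<0$ at every such point. But a complete steady gradient Ricci soliton has $R\ge 0$ by B.-L. Chen's theorem \cite{Ch}; this contradiction excludes type (iii). I expect this step to be the only one that uses more than Theorem~\ref{local} and bookkeeping with the soliton equation: one must notice the explicit negative scalar curvature of the model in (iii) and invoke nonnegativity of $R$ for complete steady solitons. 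Combining the four cases, $(M,g)$ is Ricci flat or isometric to the Bryant soliton, which is the assertion of the theorem.
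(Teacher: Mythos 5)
Your proof is correct and follows essentially the same route as the paper: reduce to the four local types of Theorem~\ref{local}, discard types (ii) and (iii), read off Ricci flatness in type (i), and invoke the classification of complete locally conformally flat steady solitons from \cite{CC1, CM} in type (iv). The only cosmetic difference is that you exclude type (iii) via its negative scalar curvature $R=-4/(9s^2)$ together with the nonnegativity of $R$ on complete steady solitons, whereas the paper discards that case as necessarily incomplete --- though the paper itself records your scalar-curvature observation in the remark following the proof of Theorem~\ref{local}.
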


 The  expanding solitons are much less rigid and many works have been done recently, e.g. \cite{PW, CD, SS, Cho} and references therein. We prove;

\begin{thm} \label{expand}
A  4-dimensional complete expanding gradient Ricci soliton  with harmonic Weyl curvature is one of the following;

\smallskip
{\rm (i)} $g$ is an Einstein metric with $f$ a constant function.

{\rm (ii)} $g$ is isometric to a finite quotient of $ \mathbb{R}^2 \times N_{\lambda}$
where $ \mathbb{R}^2$ has the Euclidean metric and $N_{\lambda}$ is a 2-dimensional Riemannian manifold of constant curvature ${\lambda} < 0$.   And $f = \frac{\lambda}{2} |x|^2$ on the Euclidean factor.

{\rm (iii)} $g$ is locally conformally flat.
\end{thm}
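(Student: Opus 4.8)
The plan is to derive Theorem~\ref{expand} from the local classification of Theorem~\ref{local} by a globalization argument, the only genuinely nontrivial case being the local-product one. Since the soliton is expanding, $\lambda<0$; in particular $\lambda\neq 0$, so type~{\rm (iii)} of Theorem~\ref{local}, which requires $\lambda=0$, is immediately excluded. Hence $(M,g,f)$ is of type~{\rm (i)}, {\rm (ii)} or {\rm (iv)}. Type~{\rm (i)} is precisely conclusion~{\rm (i)} here, and type~{\rm (iv)} (where, among other things, $g$ is locally conformally flat) gives conclusion~{\rm (iii)}. So the whole task is to upgrade the \emph{local} product description of type~{\rm (ii)} to the \emph{global} statement~{\rm (ii)}, which is where completeness is used.

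Assume $(M,g,f)$ is of type~{\rm (ii)}. The first step is the de~Rham splitting. By Theorem~\ref{local}{\rm (ii)}, $g$ is locally a Riemannian product of a flat surface with a surface of constant curvature $\lambda<0$; in particular $g$ is locally reducible, with one parallel $2$-plane field flat and the other of curvature $\lambda$. The universal cover $(\tilde M,\tilde g)$ is complete (as $g$ is) and has the same local structure, so the de~Rham decomposition theorem yields an isometric splitting $\tilde M\cong\mathbb{R}^2\times N$, where the first factor---complete, simply connected, flat---is the Euclidean plane and $N$---complete, simply connected of constant curvature $\lambda<0$---is the hyperbolic plane $\mathbb{H}^2(\lambda)$; the curvature structure forbids any further splitting.

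The second step is to pin down the potential function and the deck group $\Gamma$ with $M=\tilde M/\Gamma$. The lift $\tilde f$ satisfies the soliton equation on $\tilde M$, and from $Rc_{\tilde g}=0\oplus\lambda g_{\mathbb{H}^2}$ one gets $\nabla d\tilde f=\lambda g_{\mathbb{R}^2}\oplus 0$. Since the $\mathbb{H}^2$-block of $\nabla d\tilde f$ vanishes and the $\mathbb{H}^2$-slices are totally geodesic, the restriction of $\tilde f$ to each slice has parallel gradient; irreducibility of $\mathbb{H}^2$ forces this gradient to vanish, so $\tilde f$ is constant along the $\mathbb{H}^2$-factor, and integrating $\mathrm{Hess}_{\mathbb{R}^2}\tilde f=\lambda g_{\mathbb{R}^2}$ gives $\tilde f=\tfrac{\lambda}{2}|x-x_0|^2+\mathrm{const}$; after translating the $\mathbb{R}^2$-factor we may take $x_0=0$. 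Now $\Gamma$ acts on $\tilde M$ freely, properly discontinuously, and by isometries preserving $\tilde f$; every isometry of $\mathbb{R}^2\times\mathbb{H}^2$ respects the de~Rham factors, so each $\gamma\in\Gamma$ is a pair $(\gamma_1,\gamma_2)$, and $\tilde f\circ\gamma=\tilde f$ forces $\gamma_1\in O(2)$. Consequently $\Gamma$ preserves the totally geodesic slice $\{0\}\times\mathbb{H}^2$ and acts on it through $\gamma\mapsto\gamma_2$, which is therefore injective with discrete, torsion-free image; writing $\Gamma_0:=\{\gamma\in\Gamma:\gamma_1=\mathrm{id}\}$, the quotient $\tilde M/\Gamma_0$ is the Riemannian product $\mathbb{R}^2\times N_\lambda$ with $N_\lambda:=\mathbb{H}^2/\{\gamma_2:\gamma\in\Gamma_0\}$ a complete surface of constant curvature $\lambda<0$, while $\Gamma/\Gamma_0\cong\{\gamma_1:\gamma\in\Gamma\}\subset O(2)$ acts orthogonally on the Euclidean factor. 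This exhibits $g$ as a quotient of $\mathbb{R}^2\times N_\lambda$ of the form asserted in conclusion~{\rm (ii)}, with $f=\tfrac{\lambda}{2}|x|^2$ on the Euclidean factor modulo a constant.

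I expect the main obstacle to be the globalization itself: extracting a de~Rham splitting of the universal cover from the purely local product structure furnished by Theorem~\ref{local}{\rm (ii)}, and then controlling how $\Gamma$ sits inside $\mathrm{Isom}(\mathbb{R}^2)\times\mathrm{Isom}(\mathbb{H}^2)$ via the invariance of $\tilde f$. What makes it tractable is precisely that one de~Rham factor has \emph{nonzero constant} curvature---hence is irreducible, so $\tilde f$ is constant along it and its universal cover is unambiguously $\mathbb{H}^2$---and that $\lambda<0$, which both identifies that factor and rules out type~{\rm (iii)} of Theorem~\ref{local} at the outset.
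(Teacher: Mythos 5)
Your proposal is correct and follows essentially the same route as the paper: Theorem \ref{expand} is deduced from the local classification of Theorem \ref{local} (via its complete version, Theorem \ref{claim112na5}), with the expanding hypothesis $\lambda<0$ disposing of the singular type {\rm (iii)} and the only real work being the globalization of the local product structure of type {\rm (ii)}, which the paper dispatches in a single sentence by passing to the universal cover. Your de~Rham splitting and deck-group analysis supply details the paper leaves implicit; the one point that neither you nor the paper establishes is why the residual part $\Gamma/\Gamma_0\subset O(2)$ of the deck group is \emph{finite}, as the word ``finite'' in conclusion {\rm (ii)} literally demands (an irrational screw motion $(\rho_\theta,\gamma_2)$ with $\gamma_2$ a hyperbolic translation preserves $\tilde f=\tfrac{\lambda}{2}|x|^2$ and yields an infinite such quotient), so this is a shared imprecision in the statement rather than a defect of your argument.
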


In \cite{PW} Petersen and Wylie proved that any complete gradient Ricci soliton with harmonic curvature is rigid.
But it is not clear if their argument extends to work for a local soliton.
The classification of any (not necessarily complete) gradient Ricci soliton with harmonic curvature comes from Theorem \ref{local}; we demonstrated it as Corollary \ref{har2} in the final section.

\medskip
To prove Theorem \ref{local},  from the harmonic Weyl curvature condition on gradient Ricci solitons, we observe by the arguments of \cite{CC2, FG} that  $\frac{\nabla f}{ | \nabla f | }$ is a Ricci-eigen vector field with its eigenvalue $\lambda_1$, there is a local function $s$ with $\nabla  s =\frac{\nabla f}{ | \nabla f | }$, and $\lambda_1$ and $R$ are functions of $s$ only.
Next we obtain important geometric informations on (Ricci-)eigenvalues, eigenvectors and eigenspaces  from the Codazzi tensor $Rc- \frac{R}{6}g$ through  Derdzi\'{n}ski's Lemma \ref{derdlem} and its extension Lemma \ref{abc60}.

Based on all the above, we show in Lemma \ref{raas} that
the Ricci-eigenvalues $\lambda_i$, $i=1,\cdots ,4$ locally depend only on the variable $s$; this key lemma is crucial in the later argument.
Then we divide the proof of Theorem \ref{local} into several cases, depending on the distinctiveness of  $\lambda_2, \lambda_3, \lambda_4$.
 There arise two subtle cases; when these three are pairwise distinct and when exactly two of them are equal. In the latter case we reduce the analysis to ordinary differential equations in Lemma \ref{bb8} and resolve them to get the types {\rm (ii)} and {\rm (iii)}.
In the former we compute on the soliton equation using Codazzi tensor property, which eliminates the case in Proposition \ref{4dformc}.

 The last case $\lambda_2=\lambda_3= \lambda_4$ is relatively simpler and produces the types {\rm (i)} and {\rm (iv)}.
Theorem \ref{steady}, \ref{expand} and Corollary \ref{har2} on the harmonic curvature case can be easily deduced from Theorem \ref{local}.

\bigskip
This paper is organized as follows. In section 2 we develop properties common to any gradient Ricci solitons with harmonic Weyl curvature and nonconstant $f$;  in particular we prove  that  $\lambda_i$'s, $i=1,\cdots ,4$  depend only on $s$.
In section 3 we study the case that  the  three $\lambda_i$'s, $i=2,3,4$, are pairwise distinct.
In section 4, 5 and 6, we analyze the case when two of the three $\lambda_i$'s, $i=2,3,4$, are equal.
In section 7, we treat the remaining case that $\lambda_2=\lambda_3= \lambda_4$.
In the final section 8, we summarize and prove theorems.

\section{Gradient Ricci solitons with harmonic Weyl curvature}

We shall begin by recalling some properties of a gradient Ricci soliton with  harmonic Weyl curvature  in a few lemmas.

 \begin{lemma} \label{solitonformulas}
For any gradient Ricci soliton $(M,g,f)$, we have;

\smallskip
{\rm (i)} $\frac{1}{2} dR = R(\nabla f, \cdot ) $, where $R$ in the left hand side denotes the scalar curvature, and $R(\cdot, \cdot)$ is a Ricci tensor.

\smallskip
{\rm (ii)} $R + |\nabla f|^2 - 2\lambda f = constant$.
 \end{lemma}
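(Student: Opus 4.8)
The plan is to derive both identities directly from the soliton equation $\nabla df = -Rc + \lambda g$ together with the contracted second Bianchi identity $\nabla^i R_{ij} = \frac{1}{2}\nabla_j R$ and the commutation rule for covariant derivatives; both are classical (Hamilton's identities), so the argument is short and the only real care needed is sign bookkeeping.

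For part (i), I would first take the metric trace of the soliton equation to get $\Delta f = -R + n\lambda$, hence $\nabla_j \Delta f = -\nabla_j R$. Next, differentiate the soliton equation once more: $\nabla_i\nabla_j\nabla_k f = -\nabla_i R_{jk}$. Contracting with $g^{ij}$ and commuting the third derivatives via the curvature tensor, the left side becomes $\nabla_k\Delta f + R_{k}{}^{l}\nabla_l f$ (the curvature correction produces a Ricci term after contraction), while the right side becomes $-\nabla^i R_{ik} = -\frac{1}{2}\nabla_k R$ by the contracted Bianchi identity. Substituting $\nabla_k\Delta f = -\nabla_k R$ gives $-\nabla_k R + R_k{}^l\nabla_l f = -\frac{1}{2}\nabla_k R$, i.e. $R_{kl}\nabla^l f = \frac{1}{2}\nabla_k R$, which is exactly $R(\nabla f,\cdot) = \frac{1}{2}dR$. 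Equivalently, one may apply the divergence operator to $\nabla df = -Rc+\lambda g$ and use $\mathrm{div}(\nabla df) = d(\Delta f) + Rc(\nabla f,\cdot)$ (Bochner) together with $\mathrm{div}(Rc) = \frac{1}{2}dR$; I would keep this as a cross-check on the signs.

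For part (ii), I would contract the soliton equation with $\nabla f$. Pairing $\nabla_i\nabla_j f$ with $\nabla^j f$ gives $\frac{1}{2}\nabla_i|\nabla f|^2$, so the soliton equation yields $\frac{1}{2}\nabla_i|\nabla f|^2 = -R_{ij}\nabla^j f + \lambda\nabla_i f$. Applying part (i) to rewrite $R_{ij}\nabla^j f = \frac{1}{2}\nabla_i R$, this becomes $\nabla_i\big(|\nabla f|^2 + R - 2\lambda f\big) = 0$. Since $M$ is connected, the function $|\nabla f|^2 + R - 2\lambda f$ is constant.

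The computations are routine tensor calculus, and no genuine obstacle arises; the one point to watch is the sign convention in the commutation/Bochner step used for part (i), which is why I would confirm it by the two independent derivations indicated above.
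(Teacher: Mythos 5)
Your proof is correct. The paper states this lemma without proof, as it is a classical fact (Hamilton's identities for gradient Ricci solitons), and your derivation --- tracing the soliton equation to get $\Delta f = -R + n\lambda$, commuting the third covariant derivatives of $f$ and invoking the contracted second Bianchi identity $\nabla^i R_{ik} = \tfrac{1}{2}\nabla_k R$ for (i), then pairing the soliton equation with $\nabla f$ and substituting (i) for (ii) --- is precisely the standard argument the paper implicitly relies on; the only hypothesis worth making explicit is connectedness of $M$ for the constant in (ii), which you correctly noted.
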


Our notational convention is as follows; for orthonormal vector fields $E_i$, $i=1, \cdots, n$ on an $n$-dimensional Riemannian manifold, the curvature components are

$R_{ijkl}:=R(E_i, E_j, E_k, E_l) = < \nabla_{E_i} \nabla_{E_j} E_k - \nabla_{E_j} \nabla_{E_i} E_k  -  \nabla_{[E_i, E_j]} E_k , E_l>  $.

\smallskip
\noindent We recall the formula (2.1) in  \cite{FG};

 \begin{lemma} \label{threesol}
For a  gradient Ricci soliton $(M^n, g, f)$ with harmonic Weyl curvature on an $n$-dimensional manifold $M^n$, we have;
\begin{eqnarray*} \label{solba}
R(X, Y, Z,\nabla  f ) &=& \frac{1}{n - 1} R(X,\nabla  f )g(Y, Z) - \frac{1}{n - 1} R(Y,\nabla f )g(X, Z)\\
&=& \frac{1}{2(n - 1)} dR(X) g(Y, Z) - \frac{1}{2(n - 1)} dR(Y)g(X, Z).
\end{eqnarray*}
\end{lemma}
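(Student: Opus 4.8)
The plan is to combine two standard facts: first, that for $n\ge 4$ the hypothesis $\delta W=0$ is equivalent to the vanishing of the Cotton tensor, hence to the statement that the Schouten tensor $Rc-\frac{R}{2(n-1)}g$ is a Codazzi tensor; and second, that the soliton equation converts a third covariant derivative of $f$ into a covariant derivative of $Rc$, so that commuting covariant derivatives brings in the full curvature tensor contracted against $\nabla f$.

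Concretely, I would first record that $\delta W=0$ gives, for all vector fields $X,Y,Z$,
\[
(\nabla_X Rc)(Y,Z)-(\nabla_Y Rc)(X,Z)=\frac{1}{2(n-1)}\left(dR(X)\,g(Y,Z)-dR(Y)\,g(X,Z)\right).
\]
Next, differentiating the soliton identity $\nabla df=-Rc+\lambda g$ once more and using that $\lambda g$ is parallel gives $(\nabla_X Rc)(Y,Z)=-(\nabla_X\nabla df)(Y,Z)$, so the left-hand side above equals $-\left[(\nabla_X\nabla df)(Y,Z)-(\nabla_Y\nabla df)(X,Z)\right]$. Applying the Ricci identity (commutation of covariant derivatives) to the $1$-form $df$ identifies this antisymmetrization, in the curvature convention $R_{ijkl}=\langle R(E_i,E_j)E_k,E_l\rangle$ fixed in the paper, with $-R(X,Y,Z,\nabla f)$; hence
\[
R(X,Y,Z,\nabla f)=\frac{1}{2(n-1)}\left(dR(X)\,g(Y,Z)-dR(Y)\,g(X,Z)\right).
\]

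Finally it remains only to substitute $dR(X)=2\,R(X,\nabla f)$ from Lemma \ref{solitonformulas}(i), which turns the right-hand side into $\frac{1}{n-1}\left(R(X,\nabla f)\,g(Y,Z)-R(Y,\nabla f)\,g(X,Z)\right)$; this is the first displayed equality of the lemma, and the second displayed equality is then a mere restatement using Lemma \ref{solitonformulas}(i) again. There is nothing deep here: the only points requiring care are the bookkeeping of curvature- and Ricci-identity signs, and the (standard, but worth quoting) equivalence between $\delta W=0$ and the Codazzi property of the Schouten tensor in dimension $\ge 4$; the latter is exactly what makes $Rc-\frac{R}{6}g$ a Codazzi tensor in the four-dimensional case of interest.
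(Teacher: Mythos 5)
Your argument is correct and is exactly the standard derivation: the paper itself gives no proof of this lemma, simply quoting it as formula (2.1) of \cite{FG}, and the computation you outline (Codazzi property of the Schouten tensor from $\delta W=0$, plus the commutation of third covariant derivatives of $f$ via the soliton equation and the substitution $dR=2\,Rc(\nabla f,\cdot)$) is precisely how that formula is obtained there. The signs work out in the paper's curvature convention as you describe, so nothing further is needed.
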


One may mimic arguments in \cite{CC2} and get the next lemma.

 \begin{lemma} \label{threesolb}
Let  $(M^n, g, f)$ be  a  gradient Ricci soliton  with harmonic Weyl curvature. Let $c$ be a regular value of $f$ and $\Sigma_c= \{ x | f(x) =c  \}$  be the level surface of $f$. Then the following hold;

{\rm (i)} Where $\nabla f \neq 0$,  $E_1 := \frac{\nabla f }{|\nabla f  | }$ is an eigenvector field of $Rc$.

{\rm (ii)}  $R$ and $ |\nabla f|^2$  are constant on a connected component of $\Sigma_c$.

{\rm (iii)} There is a function $s$ locally defined with   $s(x) = \int  \frac{   d f}{|\nabla f|} $, so that

$ \ \ \ \ ds =\frac{   d f}{|\nabla f|}$ and $E_1 = \nabla s$.

{\rm (iv)}  $R({E_1, E_1})$ is constant on a connected component of $\Sigma_c$.

{\rm (v)}  Near a point in $\Sigma_c$, the metric $g$ can be written as

$\ \ \ g= ds^2 +  \sum_{i,j > 1} g_{ij}(s, x_2, \cdots  x_n) dx_i \otimes dx_j$, where
    $x_2, \cdots  x_n$ is a local coordinates system on $\Sigma_c$.

{\rm (vi)}  $\nabla_{E_1} E_1=0$.
\end{lemma}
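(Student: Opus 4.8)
The plan is to read off all six statements from two ingredients already available: the soliton identities of Lemma~\ref{solitonformulas}, and the pointwise curvature identity of Lemma~\ref{threesol}, which is where the harmonic Weyl hypothesis enters. The one substantive step is to show that $\nabla R$ is everywhere a multiple of $\nabla f$; once that is in hand, parts (i)--(vi) are consequences of it together with standard Riemannian geometry, and the rest of the argument is bookkeeping.

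\emph{Parts (i) and (ii).} In Lemma~\ref{threesol} substitute $Z=\nabla f$. The left side is $R(X,Y,\nabla f,\nabla f)$, which vanishes by the antisymmetry of the curvature tensor in its last two arguments, so $dR(X)\,g(Y,\nabla f)=dR(Y)\,g(X,\nabla f)$ for all vector fields $X,Y$. Setting $Y=\nabla f$ yields $dR=\frac{dR(\nabla f)}{|\nabla f|^2}\,df$ wherever $\nabla f\neq0$; thus $\nabla R\parallel\nabla f$ there. Combining with Lemma~\ref{solitonformulas}(i), the one-form $R(\nabla f,\cdot)=\tfrac12\,dR$ is proportional to $df$, i.e.\ $\nabla f$ (hence $E_1$) is a Ricci-eigenvector field, which is (i). Since $\nabla R\parallel\nabla f$, the function $R$ is locally constant along level sets of $f$, giving the statement for $R$ in (ii); then Lemma~\ref{solitonformulas}(ii), asserting that $R+|\nabla f|^2-2\lambda f$ is constant, forces $|\nabla f|^2$ to be constant on each connected component of $\Sigma_c$ as well.

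\emph{Parts (iii) and (iv).} Work in a neighborhood of a regular point of $f$ small enough that the level sets of $f$ meeting it are connected and $f$ has no critical point there. By (ii), $|\nabla f|$ is then a function of $f$ alone, say $|\nabla f|=\phi(f)>0$; set $s:=\int df/\phi(f)$, a function of $f$ defined up to a constant, so that $ds=df/|\nabla f|$, $\nabla s=\nabla f/|\nabla f|=E_1$, and $|\nabla s|\equiv1$. For (iv): since $R$ is constant on level sets it is locally a function $R=R(f)$, and (i) together with Lemma~\ref{solitonformulas}(i) give $R(E_1,E_1)=R(\nabla f,\nabla f)/|\nabla f|^2=\tfrac12\,dR(\nabla f)/|\nabla f|^2=\tfrac12\,R'(f)$, which is constant on $\Sigma_c$.

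\emph{Parts (v) and (vi).} Because $E_1=\nabla s$ with $|\nabla s|\equiv1$, for every vector field $X$ the symmetry of the Hessian gives $g(\nabla_{E_1}E_1,X)=\nabla^2 s(\nabla s,X)=\nabla^2 s(X,\nabla s)=\tfrac12\,X|\nabla s|^2=0$, so $\nabla_{E_1}E_1=0$; that is (vi), and it says the integral curves of $E_1$ are unit-speed geodesics normal to $\Sigma_c$. For (v), choose local coordinates $x_2,\dots,x_n$ on $\Sigma_c$ and extend them to a neighborhood by keeping them constant along the flow of $E_1$, using $s$ as the remaining coordinate; the standard Gauss-lemma / Fermi-coordinate argument (orthogonality of $\partial_s$ to the $s$-level sets is preserved along the normal geodesics, since $\partial_s\,g(\partial_s,\partial_{x_i})$ reduces to $g(\partial_s,\nabla_{\partial_{x_i}}\partial_s)=\tfrac12\partial_{x_i}|\partial_s|^2=0$) removes all cross terms and gives $g=ds^2+\sum_{i,j>1}g_{ij}(s,x_2,\dots,x_n)\,dx_i\otimes dx_j$. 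I do not anticipate a real obstacle: the only step needing care is the contraction in Lemma~\ref{threesol} that produces $\nabla R\parallel\nabla f$, and the only subtlety is that everything is asserted locally, near a regular point of $f$, which is why one restricts to a neighborhood on which the relevant level sets are connected.
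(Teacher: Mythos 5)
Your proof is correct and follows essentially the same route as the paper: parts (i)--(iv) come from substituting $\nabla f$ into Lemma~\ref{threesol} together with Lemma~\ref{solitonformulas}, and (v)--(vi) are the standard normal-coordinate/Gauss-lemma facts for the unit gradient field $\nabla s$. The only cosmetic differences are that you prove (vi) before (v) via symmetry of the Hessian of $s$ (the paper derives it from the coordinates of (v)) and you get the constancy of $|\nabla f|^2$ from Lemma~\ref{solitonformulas}(ii) rather than by differentiating $|\nabla f|^2$ directly with the soliton equation; both variants are fine.
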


\begin{proof}  Lemma \ref{threesol} gives $R(\nabla f, X) = 0$ for $X \perp \nabla f$, hence $E_1 = \frac{\nabla f }{|\nabla f  | }$ is an eigenvector of $Rc$.

 As $d R = 2 R(\nabla f, \cdot)$ from Lemma \ref{solitonformulas}, $d R (X) =0$ for $X \perp \nabla f$.
Also, $\frac{1}{2}\nabla_X |\nabla f|^2  = -  R( \nabla  f, X ) + \lambda g( \nabla  f, X ) = 0$  for $X \perp \nabla f$. We proved {\rm (ii)}.

 $d (\frac{   d f}{|\nabla f|}) = -\frac{1}{2 |\nabla f|^{\frac{3}{2}}} d |\nabla f|^{2} \wedge df= 0  $ as $\nabla_X ( |\nabla f|^2 )=0$ for $X \perp \nabla f$. So, {\rm (iii)} is proved.

Locally, $R$ may be considered as a function of the local variable $s$ only.
We can express $dR(E_1)=\frac{dR}{ds}ds (E_1)= \frac{dR}{ds} g( \nabla s, \nabla s ) =\frac{dR}{ds}$.
By Lemma \ref{solitonformulas},  we have $dR(E_1) = 2R({E_1, E_1}) |\nabla f|$, so $R({E_1, E_1})$ is constant on a connected component of $\Sigma_c$.

 As $\nabla f$ and the level surfaces of $f$ are perpendicular, one gets  {\rm (v)}.

For  {\rm (vi)}, one follows the proof of Proposition 5.1  in \cite{CC2}; with the local coordinates $s, x_2, \cdots  x_n$ in {\rm (v)}, one readily gets $\nabla s= \frac{\partial}{\partial s}$ so that  $[\frac{\partial}{\partial x_i}, \nabla s]=0 $. Then $\langle  \nabla s, \nabla s  \rangle=1$ and $\langle \frac{\partial}{\partial x_i}  , \nabla s \rangle=0$ yield {\rm (vi)}.
\end{proof}

A Codazzi tensor  on a Riemannian manifold $M$ is a symmetric tensor $A$ of covariant order 2 such that  $d^{\nabla} A =0$, which can be written in local coordinates as $\nabla_k A_{ij}  = \nabla_i A_{kj}$.
Derdzi\'{n}ski  \cite{De} described the following;
for a Codazzi tensor $A$ and a point $x$ in $M$, let  $E_A(x)$ be the number of distinct eigenvalues of $A_x$,
and set $M_A = \{    x \in M \  |  \  E_A {\rm \ is \ constant \ in \ a \ neighborhood of \ } x \}$, so that $M_A$ is an open dense subset of $M$ and that in each connected component of $M_A$, the eigenvalues are well-defined and differentiable functions.
The next lemma is from the section 2 of \cite{De}.

\begin{lemma} \label{derdlem}
For a Codazzi tensor $A$ on a Riemannian manifold $M$,
in each connected component of $M_A$,

{\rm (i)} Given distinct eigenfunctions $\lambda, \mu$ of $A$ and local vector fields $v, u$ such that  $A v = \lambda v$, $Au = \mu u$ with $|u|=1$, it holds that

$ \ \ \ \ \  v(\mu) = (\mu - \lambda) <\nabla_u u, v > $.

{\rm (ii)} For each eigenfunction $\lambda$, the $\lambda$-eigenspace distribution is integrable and its leaves are totally umbilic submanifolds of $M$.

{\rm (iii)} Eigenspaces of $A$ form mutually orthogonal differentiable distributions.
\end{lemma}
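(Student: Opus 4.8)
The plan is to derive everything from the Codazzi identity $(\nabla_X A)(Y,Z)=(\nabla_Y A)(X,Z)$, valid for all vector fields, together with the elementary fact that a symmetric $2$-tensor has pairwise orthogonal eigenspaces at each point. Fix a connected component $U$ of $M_A$; by definition the number of distinct eigenvalues equals a fixed integer $k$ on $U$, and the characteristic polynomial of $A$ factors as $\prod_{i=1}^{k}(t-\lambda_i)^{m_i}$ with constant multiplicities $m_i$ and pairwise distinct continuous functions $\lambda_i$. I would first settle the regularity asserted in {\rm (iii)}: fix $x_0\in U$ and disjoint small circles $\gamma_1,\dots,\gamma_k$ in $\mathbb{C}$ around the distinct numbers $\lambda_1(x_0),\dots,\lambda_k(x_0)$; by continuity of the spectrum, for $x$ near $x_0$ the circle $\gamma_i$ encloses exactly the eigenvalue $\lambda_i(x)$ with its multiplicity $m_i$. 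Then the spectral projection $P_i(x)=\frac{1}{2\pi i}\oint_{\gamma_i}(zI-A(x))^{-1}\,dz$ is smooth in $x$, has constant rank $m_i$ and image $D_{\lambda_i}(x)$, so the eigenspace distributions are smooth subbundles, and $\lambda_i=\frac{1}{m_i}\mathrm{tr}(AP_i)$ is smooth; mutual orthogonality of the $D_{\lambda_i}$ is immediate from symmetry of $A$.

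For {\rm (i)}, take a unit local eigenvector field $u$ with $Au=\mu u$ and a local eigenvector field $v$ with $Av=\lambda v$, $\lambda\ne\mu$, and expand both sides of $(\nabla_v A)(u,u)=(\nabla_u A)(v,u)$. On the left, $A(u,u)=\mu$ because $|u|=1$, while $A(\nabla_v u,u)=\mu\langle\nabla_v u,u\rangle=\tfrac12\mu\,v(|u|^2)=0$, so $(\nabla_v A)(u,u)=v(\mu)$. On the right, $A(v,u)=\lambda\langle v,u\rangle=0$ since $v\perp u$ (distinct eigenvalues), and using $\langle\nabla_u v,u\rangle=-\langle v,\nabla_u u\rangle$ one gets $(\nabla_u A)(v,u)=-\mu\langle\nabla_u v,u\rangle-\lambda\langle v,\nabla_u u\rangle=(\mu-\lambda)\langle\nabla_u u,v\rangle$. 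Equating the two sides gives the stated formula.

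For {\rm (ii)}, the same bookkeeping applied to a different slot of the Codazzi identity yields the decisive relation. Let $v_1,v_2$ be sections of $D_\lambda$ and $u$ a unit eigenvector field with eigenvalue $\mu\ne\lambda$. Expanding $(\nabla_u A)(v_1,v_2)=(\nabla_{v_1}A)(u,v_2)$, the left side reduces to $u(\lambda)\,g(v_1,v_2)$ (the two $\lambda$-terms from differentiating the metric cancel), while the right side, after using $A(u,v_2)=0$ and $\langle\nabla_{v_1}u,v_2\rangle=-\langle u,\nabla_{v_1}v_2\rangle$, reduces to $(\lambda-\mu)\langle u,\nabla_{v_1}v_2\rangle$. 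Hence
\[
\langle\nabla_{v_1}v_2,\,u\rangle\;=\;\frac{u(\lambda)}{\lambda-\mu}\,g(v_1,v_2).
\]
Since the right-hand side is symmetric in $v_1,v_2$, we obtain $\langle[v_1,v_2],u\rangle=0$ for every eigenvector $u$ with eigenvalue $\ne\lambda$, so $[v_1,v_2]$ is a section of $D_\lambda$ and $D_\lambda$ is integrable. Moreover the displayed identity says the second fundamental form of a leaf $L$ of $D_\lambda$ is $\mathrm{II}(v_1,v_2)=g(v_1,v_2)\,H$ with the fixed normal field $H=\sum_{\mu\ne\lambda}\big(u_\mu(\lambda)/(\lambda-\mu)\big)u_\mu$ (sum over a local orthonormal frame of normal eigenvectors), i.e.\ $L$ is totally umbilic.

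The only genuinely delicate point is the differentiability claimed in {\rm (iii)}, that on a connected component of $M_A$ the eigenvalues and spectral projections vary smoothly; this is precisely where the constancy of $E_A$ enters, and it requires the resolvent/functional-calculus argument above (or an equivalent implicit-function argument for the roots of the characteristic polynomial). Granting that, parts {\rm (i)} and {\rm (ii)} are short, purely algebraic consequences of the Codazzi equation as sketched.
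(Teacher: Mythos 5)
Your proof is correct. Note, however, that the paper itself offers no proof of this lemma: it simply cites Section~2 of Derdzi\'{n}ski's paper \cite{De}, so there is no ``paper's argument'' to compare against line by line. What you have written is a sound, self-contained reconstruction of the classical argument. The expansions of $(\nabla_v A)(u,u)=(\nabla_u A)(v,u)$ in part (i) and of $(\nabla_u A)(v_1,v_2)=(\nabla_{v_1}A)(u,v_2)$ in part (ii) are exactly the right slots of the Codazzi identity, and I checked that the cancellations you claim do occur; in fact your computation for (i) is the same bookkeeping the paper does carry out explicitly when it proves its extension, Lemma~\ref{abc60}, namely writing $\langle(\nabla_{E_i}\mathcal{A})E_j,E_k\rangle=(\lambda_j-\lambda_k)\langle\nabla_{E_i}E_j,E_k\rangle+\nabla_{E_i}\langle E_k,\mathcal{A}E_j\rangle$ and symmetrizing. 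Your identity $\langle\nabla_{v_1}v_2,u\rangle=\frac{u(\lambda)}{\lambda-\mu}\,g(v_1,v_2)$ correctly delivers both integrability (by symmetry in $v_1,v_2$ and Frobenius) and total umbilicity (the normal component of $\nabla_{v_1}v_2$ is $g(v_1,v_2)H$ with $H$ independent of $v_1,v_2$), and the resolvent-projection argument for smoothness of the eigenvalue functions and eigendistributions is the standard way to use the local constancy of $E_A$; one could equally invoke the implicit function theorem on the characteristic polynomial, as you note. The only cosmetic caveat is that in the formula for $H$ the sum should run over an orthonormal frame of the full normal bundle, with $\mu$ denoting the eigenvalue attached to each frame vector (eigenvalues of multiplicity greater than one contribute several terms); this is clearly what you intend.
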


When a Riemannian manifold $M$ of dimension $n \geq 4$  has harmonic Weyl curvature, i.e. $\delta W=0$, it is equivalent to $d^{\nabla} (Rc - \frac{R}{2n-2} g) =0$.  So, $\mathcal{A}:=Rc - \frac{R}{2n-2} g$ is a Codazzi tensor.
By Lemma \ref{derdlem}, each eigenspace distribution of $\mathcal{A}$ is integrable in the open dense subset $M_{\mathcal{A}}$ of $M$.  The leaves are totally umbilic submanifolds of $M$.
Let
 $D_1, \cdots, D_k$ be all the eigenspace distributions of ${\mathcal{A}}$ in a connected component of $M_{\mathcal{A}}$. Then,
 the Ricci tensor also has $D_1, \cdots, D_k$ as its  eigenspace distributions. Let the dimension of $D_l$ be $d_l$ for $l=1, \cdots, k$.
 Then in a neighborhood of each point of the connected component of $M_{\mathcal{A}}$, there exist an orthonormal Ricci-eigen vector fields $E_i$, $i=1, \cdots  , n$ with corresponding eigenfunctions $\lambda_i$ such that
$E_1,  \cdots , E_{d_1}  \in D_1$,   $ \ \ E_{d_1 +1}   ,  \cdots, E_{d_1+ d_2}  \in D_2 $, $\cdots$ , and    $E_{d_1 + \cdots +d_{k-1}+1}   ,  \cdots, E_{n}  \in D_k $.

\bigskip
Let $(M^n,g,f)$  be a gradient Ricci soliton with harmonic Weyl curvature. As a gradient Ricci soliton, $(M,g,f)$ is real analytic in harmonic coordinates; see \cite{Iv}  or argue as in \cite[Prop. 2.4]{HPW}. Then if $f$ is not constant, $\{ \nabla f \neq 0  \}$ is open and dense in $M$.  As in the above paragraph, we consider orthonormal Ricci-eigen vector fields $E_i$ in a neighborhood of each point in  $M_A \cap \{ \nabla f \neq 0  \}$. By just requiring $E_1= \frac{\nabla f}{|\nabla f| }$ to be in $D_1$ and using Lemma \ref{threesolb}, we obtain;

\begin{lemma} \label{aa8}
Let $(M^n,g,f)$  be an $n$-dimensional gradient Ricci soliton with harmonic Weyl curvature and non constant $f$.
For any point $p$ in the open dense subset $M_{\mathcal{A}} \cap \{ \nabla f \neq 0  \}$ of $M^n$,
there is a neighborhood $U$ of $p$ where there exists an orthonormal Ricci-eigen vector fields $E_i$, $i=1, \cdots  , n$  such  that for all the eigenspace distributions $D_1, \cdots, D_k$ of ${\mathcal{A}}$ in $U$,

  \medskip

 {\rm (i)}  $E_1= \frac{\nabla f}{|\nabla f| }$ is in $D_1$,

 {\rm (ii)} for $i>1$, $E_i$ is tangent to smooth level hypersurfaces of $f$,

 {\rm (iii)} let $d_l$ be the dimension of $D_l$ for $l=1, \cdots, k$, then $E_1,  \cdots , E_{d_1}  \in D_1$,   $ \ \ E_{d_1 +1}   ,  \cdots, E_{d_1+ d_2}  \in D_2 $, $\cdots$ , and    $E_{d_1 + \cdots +d_{k-1}+1}   ,  \cdots, E_{n}  \in D_k $.
\end{lemma}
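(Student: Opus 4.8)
The plan is to combine the Codazzi structure of $\mathcal{A}:=Rc-\frac{R}{2n-2}g$ with the soliton facts of Lemma \ref{threesolb}. Since $\delta W=0$ we have $d^{\nabla}\mathcal{A}=0$, so $\mathcal{A}$ is a Codazzi tensor, and on the open dense set $M_{\mathcal{A}}$ its eigenvalues are well-defined differentiable functions whose eigendistributions $D_1,\dots,D_k$ are mutually orthogonal and differentiable by Lemma \ref{derdlem}(iii). Because $\mathcal{A}$ and $Rc$ differ by a multiple of $g$, the $D_l$ are precisely the eigendistributions of $Rc$ as well, and an orthonormal frame diagonalizing $Rc$ is the same as one diagonalizing $\mathcal{A}$.

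Next I would fix $p\in M_{\mathcal{A}}\cap\{\nabla f\neq 0\}$ and locate $\nabla f$ among the $D_l$. By Lemma \ref{threesolb}(i), wherever $\nabla f\neq 0$ the unit field $\nabla f/|\nabla f|$ is a Ricci-eigenvector, hence an $\mathcal{A}$-eigenvector; at $p$ it thus lies in one of the eigenspaces, which we relabel as $D_1$. The one point needing care is that $\nabla f/|\nabla f|$ stays inside $D_1$ on a whole neighborhood of $p$, not merely at $p$: its eigenvalue is the continuous function $R(E_1,E_1)$, the eigenvalues of $\mathcal{A}$ are continuous and pairwise distinct on the component of $M_{\mathcal{A}}$ through $p$, and $\nabla f$ is real-analytic (hence continuous) where it is nonzero since the soliton is analytic in harmonic coordinates. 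So by continuity $\nabla f/|\nabla f|$ is a differentiable unit section of the single distribution $D_1$ on some neighborhood $U$ of $p$.

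Then I would assemble the frame. Set $E_1=\nabla f/|\nabla f|$; shrinking $U$ if necessary, take a differentiable local frame of $D_1$, orthogonalize it against $E_1$ and apply Gram–Schmidt to get differentiable orthonormal sections $E_2,\dots,E_{d_1}$ of $D_1$, and similarly choose differentiable orthonormal frames $E_{d_1+1},\dots,E_n$ of $D_2,\dots,D_k$. Since the $D_l$ are mutually orthogonal, $E_1,\dots,E_n$ is an orthonormal frame of Ricci-eigenvectors, giving (i) and the ordering (iii) by construction. For (ii), each $E_i$ with $i>1$ is orthogonal to $E_1=\nabla f/|\nabla f|$, so $df(E_i)=g(\nabla f,E_i)=0$; since $\nabla f\neq 0$ on $U$ the level sets of $f$ there are smooth hypersurfaces, and $E_i$ is tangent to them.

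The only genuine obstacle — and the step I would state explicitly — is the continuity argument that $\nabla f/|\nabla f|$ cannot jump between distinct eigendistributions near $p$; once that is in hand, the rest is routine bookkeeping with orthonormal frames adapted to an orthogonal splitting of the tangent bundle.
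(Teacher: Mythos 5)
Your proposal is correct and follows essentially the same route as the paper, which gives no separate proof but simply asserts the lemma from the preceding discussion of Derdzi\'{n}ski's results (the adapted frame construction for the Codazzi tensor $\mathcal{A}$) together with Lemma \ref{threesolb}(i). You have merely made explicit the continuity argument that $\nabla f/|\nabla f|$ remains in a single eigendistribution near $p$ and the Gram--Schmidt assembly of the frame, both of which the paper leaves implicit.
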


These local orthonormal Ricci-eigen vector fields $E_i$ of Lemma \ref{aa8} shall be called an {\it adapted frame field} of $(M, g, f)$.

\bigskip
For an adapted frame field $E_i$, $i=1, \cdots, n$, with $R_{ij}:= R(E_i, E_j)= \lambda_i \delta_{ij} $, from Lemma \ref{threesol}, for $j \in \{ 2, \cdots , n \}$ we get

\begin{equation} \label{ree}
R(E_1, E_j, E_j,\nabla  f )= \frac{1}{n - 1} Ric(E_1,\nabla  f )= \frac{1}{2(n - 1)} dR(E_1).
\end{equation}

Due to Lemma \ref{threesolb}, in a neighborhood of a point $p \in M_{\mathcal{A}} \cap \{ \nabla f \neq 0  \}$, $f$ and $R$ may be considered as functions of the variable $s$ only, and we write the derivative in $s$ by a prime: $f^{'} = \frac{df}{ds}$ and $R^{'} = \frac{dR}{ds}$, etc..
We recall $dR(E_1)=R^{'}ds (E_1)= R^{'} g( \nabla s, \nabla s ) =R^{'}$ and similarly  $df(E_1) = f^{'}$. Also, $df(E_1) = g(\nabla f, \frac{\nabla f}{ |\nabla f| })  = |\nabla f |$. So, $|\nabla f |= f^{'}$.
Then (\ref{ree}) becomes;
\begin{eqnarray} \label{0110g}
   R_{1jj1}  |\nabla  f| = \frac{1}{n - 1} R_{11}  |\nabla  f|  = \frac{1}{2(n - 1)} R^{'} .
  \end{eqnarray}

 \begin{lemma} \label{threesolb01}
For a  gradient Ricci soliton $(M,g,f)$ with harmonic Weyl curvature, and for a local adapted  frame field $\{ E_i \}$ in $M_{\mathcal{A}} \cap \{ \nabla f \neq 0  \}$,
setting
$ \zeta_i= - <   \nabla_{E_i}  E_i ,  E_1  >$, for $i >1$, we have;
\begin{equation} \label{lambda06a}
\nabla_{E_1}  E_1=0,    \ \ \  {\rm and}   \ \ \ \nabla_{E_i}  E_1 =   \frac{1}{ |\nabla f|} ( \lambda - \lambda_{i})E_i.
\end{equation}
\begin{equation} \label{lambda06}
\zeta_i =     \frac{1}{ |\nabla f|} (\lambda - \lambda_{i}).
\end{equation}
\end{lemma}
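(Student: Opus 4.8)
The plan is to read the soliton equation $\nabla d f = -Rc + \lambda g$ directly in the adapted frame $\{E_i\}$. Since $\nabla d f(X,Y)=g(\nabla_X\nabla f,Y)$, the soliton equation says $\nabla_X\nabla f=-Rc(X)+\lambda X$ for the Ricci endomorphism $Rc$. Taking $X=E_i$ and using that $E_i$ is a Ricci-eigenvector with eigenvalue $\lambda_i$ gives, for every $i$,
\begin{equation*}
\nabla_{E_i}\nabla f=(\lambda-\lambda_i)\,E_i .
\end{equation*}
On the other hand, by Lemma \ref{threesolb}(iii) and the identity $|\nabla f|=f'$ recorded before the statement, $\nabla f=f' E_1=|\nabla f|\,E_1$, so the Leibniz rule yields
\begin{equation*}
\nabla_{E_i}\nabla f=E_i(|\nabla f|)\,E_1+|\nabla f|\,\nabla_{E_i}E_1 .
\end{equation*}

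For $i>1$, the vector field $E_i$ is tangent to the level hypersurfaces of $f$ (Lemma \ref{aa8}(ii)), and $|\nabla f|$ is constant on each connected component of such a level set by Lemma \ref{threesolb}(ii); hence $E_i(|\nabla f|)=0$. Comparing the two expressions for $\nabla_{E_i}\nabla f$ then gives $|\nabla f|\,\nabla_{E_i}E_1=(\lambda-\lambda_i)E_i$, which is the second identity in (\ref{lambda06a}); the first identity $\nabla_{E_1}E_1=0$ is exactly Lemma \ref{threesolb}(vi). Finally, differentiating the orthogonality relation $\langle E_i,E_1\rangle=0$ along $E_i$ gives $\langle\nabla_{E_i}E_i,E_1\rangle=-\langle E_i,\nabla_{E_i}E_1\rangle$, and substituting the formula just obtained for $\nabla_{E_i}E_1$ produces $\zeta_i=-\langle\nabla_{E_i}E_i,E_1\rangle=\frac{1}{|\nabla f|}(\lambda-\lambda_i)$, which is (\ref{lambda06}).

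The argument is essentially a direct computation, so I do not expect a genuine obstacle; the only point requiring care is the vanishing of $E_i(|\nabla f|)$ for $i>1$, which rests entirely on the level-set structure established in Lemma \ref{threesolb} (that $|\nabla f|=f'$ is a function of $s$ alone, hence constant on the level hypersurfaces of $f$) together with $E_1=\nabla s$.
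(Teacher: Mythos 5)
Your proposal is correct and follows essentially the same route as the paper: both apply the soliton equation $\nabla_{E_i}\nabla f=(\lambda-\lambda_i)E_i$ to the unit normal $E_1=\nabla f/|\nabla f|$, using that $E_i(|\nabla f|)=0$ for $i>1$ (the paper leaves this step implicit when it writes $\nabla_{E_i}(\nabla f/|\nabla f|)=\nabla_{E_i}\nabla f/|\nabla f|$, whereas you spell it out via the Leibniz rule), and then obtain $\zeta_i$ by differentiating $\langle E_i,E_1\rangle=0$. No gaps.
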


\begin{proof}
From Lemma \ref{threesolb} we get $\nabla_{E_1}  E_1=0$.
From the gradient Ricci soliton equation, for $i >1$,
$\nabla_{E_i}  E_1 = \nabla_{E_i} (\frac{\nabla f}{  | \nabla f |}) =   \frac{ \nabla_{E_i} \nabla f }{  | \nabla f |}=   \frac{ - R({E_i}, \cdot) + \lambda g( {E_i}, \cdot  ) }{  | \nabla f |}  =  - \frac{1}{ |\nabla f|} ( \lambda_{i} - \lambda)E_i$.
Then,   $    \zeta_i =- <   \nabla_{E_i}  E_i ,  E_1  > =   <    E_i ,   \nabla_{E_i} E_1  > =  \frac{1}{ |\nabla f|} (\lambda - \lambda_{i}) $.
\end{proof}

 \begin{lemma} \label{raas} For a 4-dimensional gradient Ricci soliton $(M, g, f)$ with harmonic Weyl curvature, and for a local adapted  frame field $\{ E_i \}$ in $M_{\mathcal{A}} \cap \{ \nabla f \neq 0  \}$,
  the Ricci-eigen functions
 $ \lambda_i $, $i=1, \cdots ,4$, are constant on a connected component of a regular level hypersurface $\Sigma_c$ of $f$, and so depend on the local variable  $s$ only. And $\zeta_i$, $i=2,3,4$, in Lemma \ref{threesolb01} also depend on $s$ only.
 In particular, we have
$E_i (\lambda_j) = E_i (\zeta_k)= 0$ for $i,k >1$ and any $j$.

 \end{lemma}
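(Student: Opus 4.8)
\emph{Proof proposal.}

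I would use the Codazzi tensor $\mathcal A:=Rc-\frac{R}{6}g$ coming from $\delta W=0$; its eigenfunctions are $\mu_i=\lambda_i-\frac{R}{6}$, sharing the eigenframe $\{E_i\}$ with $Rc$. By Lemma~\ref{threesolb}(ii),(iv), $R$ and $\lambda_1=R_{11}$ are constant on each connected component of a regular level set $\Sigma_c$, hence functions of $s$ alone; since $E_i\perp\nabla s$ for $i>1$ this gives $E_i(R)=E_i(\lambda_1)=0$, so $E_i(\mu_j)=E_i(\lambda_j)$ there. Thus the lemma is equivalent to $E_i(\lambda_j)=0$ for all $i>1$ and all $j$; granting that, $\zeta_i=\frac{\lambda-\lambda_i}{|\nabla f|}$ of Lemma~\ref{threesolb01} is automatically a function of $s$ alone, since $\lambda$ is constant and $|\nabla f|=f'$, $\lambda_i$ are functions of $s$. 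Note also the trace relation $\sum_{j>1}\mu_j=\frac{R}{3}-\mu_1$, a function of $s$.

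Let $D_1,\dots,D_k$ be the eigenspace distributions of $\mathcal A$ with $E_1\in D_1$. If $k\le 2$, every $\lambda_j$ with $E_j\notin D_1$ equals the one eigenvalue different from $\lambda_1$, which the trace relation exhibits as a function of $s$, so all $\lambda_j$ are functions of $s$ and we are done. Assume $k\ge 3$; then every $D_m$ with $m\neq 1$ lies in $E_1^{\perp}=T\Sigma_c$, and I claim it suffices to establish: $(\ast)$ \emph{for each $m\neq 1$, the leaves of $D_m$ are minimal submanifolds of the level surfaces of $f$.} Indeed, granting $(\ast)$: for $E_a\in D_\ell$ and a unit $E_b\in D_m$ with $\ell\neq m$, Lemma~\ref{derdlem}(i) (with $u=E_b$, $v=E_a$) gives $E_a(\mu^{(m)})=(\mu^{(m)}-\mu^{(\ell)})\langle\vec{H}_m,E_a\rangle$, where $\vec{H}_m$ is the mean curvature vector of the $D_m$-leaf in $M$; by $(\ast)$ and the fact that $E_1$ is normal to $\Sigma_c$, this vector is a multiple of $E_1$, so $\langle\vec{H}_m,E_a\rangle=0$ and $E_a(\mu^{(m)})=0$ whenever $\ell\neq m$. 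For $E_a\in D_m$ with $\dim D_m\ge 2$, the Codazzi identity forces $\mu^{(m)}$ to be constant along the $D_m$-leaves, so $E_a(\mu^{(m)})=0$. For $E_a\in D_m$ with $\dim D_m=1$, applying $E_a$ to $\sum_{j>1}\mu_j=\frac{R}{3}-\mu_1$ gives $0$ on the right and $E_a(\mu_a)+\sum_{j>1,\ j\neq a}E_a(\mu_j)$ on the left, where each term $E_a(\mu_j)$ with $j\neq a$ is of the first type and vanishes; hence $E_a(\mu^{(m)})=E_a(\mu_a)=0$ too. So $E_a(\mu^{(m)})=0$ for every $E_a$ tangent to $\Sigma_c$ and every $m$, which is the assertion.

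Therefore the whole proof reduces to $(\ast)$, and this is the step I expect to be the main obstacle. The Codazzi property of $\mathcal A$ alone does not suffice: writing $\Gamma^c_{ab}=\langle\nabla_{E_a}E_b,E_c\rangle$, the Codazzi identity among three mutually transverse eigendirections only determines the triple $(\Gamma^a_{bc},\Gamma^b_{ca},\Gamma^c_{ab})$ up to a one-parameter family, not to zero -- unsurprisingly, since a bare $3$-manifold carrying a Codazzi tensor need not have locally constant eigenvalues. To eliminate this freedom I would feed in the soliton curvature identities: by Lemma~\ref{threesol}, in the adapted frame $R_{1\alpha\beta\gamma}=0$ whenever $\alpha,\beta,\gamma$ are tangent to $\Sigma_c$, and $R_{1\alpha\beta1}=\tfrac{\lambda_1}{3}\delta_{\alpha\beta}$. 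Combined with $\nabla_{E_1}E_1=0$ and $\nabla_{E_b}E_1=\frac{\lambda-\lambda_b}{|\nabla f|}E_b$ (Lemma~\ref{threesolb01}), these first show -- at least where the eigenvalues are pairwise distinct -- that the adapted frame is parallel along the integral curves of $E_1$, and then, re-expressing the vanishing tangential components $R_{1\alpha\beta\gamma}$ via the structure equations, turn each $\Gamma^c_{ab}$ into the solution of a linear homogeneous ODE along those curves. As such an ODE does not by itself force vanishing, the decisive further input is a second, independent relation among the $\Gamma^c_{ab}$ -- an extension of Derdzi\'{n}ski's Lemma adapted to gradient Ricci solitons, obtained by differentiating the soliton and Codazzi identities once more and using the total umbilicity of the eigenspace leaves from Lemma~\ref{derdlem}(ii) -- which, together with the ODE, forces $\Gamma^c_{ab}\equiv 0$ and hence $(\ast)$. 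Granting $(\ast)$, the argument of the previous paragraph completes the proof.
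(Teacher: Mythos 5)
Your proposal reduces the lemma to the claim $(\ast)$ that the leaves of each tangential eigendistribution are minimal inside the level hypersurfaces, and the reduction itself (via Lemma~\ref{derdlem}(i), total umbilicity, and the trace identity) is sound. But $(\ast)$ is exactly the content you do not prove: you acknowledge it is ``the main obstacle'' and offer only a plan --- an ODE along integral curves of $E_1$ plus ``a second, independent relation'' that is never written down. This is a genuine gap, not a routine verification. Worse, the plan as stated aims at the wrong target: you say the combined relations should force $\Gamma^c_{ab}\equiv 0$, but in the paper's subsequent analysis of the pairwise-distinct case (Lemma~\ref{77} and (\ref{coeff01})) the mixed Christoffel symbols $\Gamma^k_{ij}$ with $i,j,k>1$ distinct are generically \emph{nonzero} (they encode the structure constants $\alpha,\beta,\gamma$ of Lemma~\ref{4dformb}); only the components $\langle\nabla_{E_b}E_b,E_a\rangle$ vanish, and in the paper even that vanishing is \emph{deduced from} Lemma~\ref{raas} together with Derdzi\'{n}ski's lemma, not the other way around. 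So your route runs against the logical order of the paper and would need a genuinely new argument for $(\ast)$ that you have not supplied. (A smaller point: the fact that an eigenvalue with eigenspace of dimension $\ge 2$ is constant along its leaves is used but not justified from the quoted form of Lemma~\ref{derdlem}.)

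The paper avoids $(\ast)$ entirely. Its proof is a global trace computation: $R$ and $\lambda_1$ depend only on $s$ by Lemma~\ref{threesolb}; then, computing $\sum_j\nabla_j\nabla_j R$ two ways (once via Lemma~\ref{threesolb01}, once via the contracted second Bianchi identity and the soliton equation) shows ${\rm tr}(Rc^2)$ depends only on $s$; a second two-way computation of $\nabla_k(f_iR_{ij}R_{jk})$, using the Codazzi form of $\delta W=0$, shows ${\rm tr}(Rc^3)$ depends only on $s$. Knowing $\lambda_1$ and the first three power sums pins down each $\lambda_i$ as a function of $s$. You should either adopt that strategy or supply a complete, correct proof of $(\ast)$ --- as it stands the proposal does not establish the lemma.
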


\begin{proof} We write $R_{ij}:= R(E_i, E_j)$. Recall that $\lambda_i=R_{ii}$.
We set $Rc^1 = Rc$ and for $k \geq 2$,
\noindent $Rc^k_{ij}= \sum_{s_1, s_2,  \cdots , s_{k-1}=1}^{4} R_{i s_1} R_{s_1 s_2}  \cdots  R_{s_{k-1} j}$ with its trace ${\rm tr}(Rc^k)= \sum_{i=1}^4 (\lambda_i)^k$.    We will show ${\rm tr}(Rc^k)$, $k=1,2,3$, depend on $s$ only.

First, $R={\rm tr}(Rc^1)$ and $\lambda_1= R_{11}$ depend on $s$ only  by Lemma  \ref{threesolb}. Next, for $k \geq 1$, writing  the Hessian $\nabla_j \nabla_i R :=  \nabla_{E_j} \nabla_{ E_{i}} R$, by Lemma \ref{threesolb01} we compute the following;
\begin{eqnarray} \label{3456}
 \sum_{j, s_1, s_2,  \cdots , s_{k-1}=1}^{4} ( \nabla_j \nabla_{ s_1} R) R_{s_1 s_2}  \cdots R_{s_{k-1} j}=  \sum_{j=1}^4 ( \nabla_j\nabla_{ j}R) (R_{j j})^{k-1}  \nonumber   \\
 = ( \nabla_1\nabla_{ 1}R) \lambda_1^{k-1}+ \sum_{i>1} (\nabla_i \nabla_i R) \lambda_i^{k-1} \hspace{2cm} \nonumber  \\
   =  (R^{''} ) \lambda_1^{k-1} +  \sum_{i>1} \{ E_iE_i(R) - (\nabla_{E_i} E_i) R \} \lambda_i^{k-1}  \hspace{0.3cm} \nonumber  \\
    = ( R^{''}) \lambda_1^{k-1}- \sum_{i>1}  \frac{R^{'}}{ |\nabla f|} (\lambda_i^{k} - \lambda \cdot \lambda_i^{k-1}) .  \hspace{1.5cm}
\end{eqnarray}
 In particular, for $k = 1$, (\ref{3456}) shows that
 $$\sum_{j=1}^4 \nabla_j \nabla_j R= R^{''} - \sum_{i>1}  \frac{R^{'}}{ |\nabla f|} (\lambda_{i}- \lambda) =R^{''} -   \frac{R^{'}}{ | \nabla f|}(R-  \lambda_{1}-3 \lambda), $$ which depends only on $s$. We drop summation symbols using the Einstein summation convention below.
\begin{eqnarray*}
\sum_{j=1}^4 \frac{1}{2}\nabla_j \nabla_{j} R=&  \nabla_j (f_i R_{ij}) =f_{ij} R_{ij} + f_i \nabla_j  R_{ij} = -(R_{ij} - \lambda g_{ij} ) R_{ij} + \frac{1}{2} f_i R_{i} \\
=&  -R_{ij}R_{ij} + \lambda R   + \frac{1}{2} f^{'} R^{'}. \hspace{5.5cm}
\end{eqnarray*}
So, ${\rm tr}(Rc^2)= R_{ij}R_{ij}$ depends only on $s$.

\medskip

We shall use the Codazzi equation  $\nabla_k R_{ij}  = \nabla_i R_{kj}  -  \frac{R_i}{6} g_{kj} + \frac{R_k}{6} g_{ij}$.
\begin{eqnarray} \label{rrt}
 \nabla_k (f_i R_{ij} R_{jk})  =  f_{ik} R_{ij} R_{jk}  +   f_i (\nabla_k R_{ij}) R_{jk} +   f_i R_{ij}  \nabla_k R_{jk}  \hspace{2.8cm} \nonumber \\
  = - (R_{ik} - \lambda g_{ik} ) R_{ij} R_{jk}  +   f_i (\nabla_i R_{kj}  -  \frac{R_i}{6} g_{kj} + \frac{R_k}{6} g_{ij}) R_{jk}
 +    \frac{1}{2} f_i R_{ij}  R_{j}  \nonumber \\
  = - {\rm tr}(Rc^3) + \lambda R_{ij} R_{ij}  +   \frac{1}{2} f_i \nabla_i ( R_{jk}R_{jk})  - f_i \frac{R_i}{6} R +\frac{ f_i R_k}{6} R_{ik}   \hspace{1.4cm} \nonumber \\
   +    \frac{1}{2} f_i R_{ij}  R_{j}.  \hspace{8.8cm}
\end{eqnarray}

All terms except ${\rm tr}(Rc^{3})$ in the right hand side of (\ref{rrt}) depend on $s$ only.
From (\ref{3456}) we also get
\begin{eqnarray} \label{rrtb}
2\nabla_k (f_i R_{ij} R_{jk})=\nabla_k ( R_j R_{jk}) = (\nabla_k  R_j) R_{jk}+ \frac{1}{2} R_j R_{j}  \hspace{1.5cm} \nonumber \\
 =  R^{''} R_{11} - \sum_{i>1}  \frac{R^{'}}{ |\nabla f|} (R_{ii}^2 - \lambda R_{ii})  +   \frac{1}{2} R_j R_j,  \hspace{0.5cm} \nonumber
\end{eqnarray}
 which depends only on $s$.
So, we compare this with (\ref{rrt}) to see that
 ${\rm tr}(Rc^3)$ depends only on $s$.
  Now $\lambda_1$ and  $\sum_{i=1}^4 (\lambda_i)^k$, $k=1, \cdots ,3$,  depend only on $s$.
This implies that
each $\lambda_i$, $i=1, \cdots ,4$,  is  a constant depending only on $s$.
By (\ref{lambda06}), $\zeta_i$, $i=2, 3 ,4$ depend on $s$ only.
\end{proof}

We now extend Lemma \ref{derdlem} (i);

\begin{lemma} \label{abc60} For a Riemannian metric $g$ of dimension $n \geq 4$ with harmonic Weyl curvature, consider orthonormal vector fields $E_i$, $i=1, \cdots n$ such that
$Rc(E_i, \cdot ) = \lambda_i g(E_i, \cdot)$. Then the following holds;

\smallskip
\noindent {\rm (i)}
 $(\lambda_j - \lambda_k ) \langle \nabla_{E_i} E_j, E_k \rangle   + \nabla_{E_i} \langle E_k, {\mathcal{A}}E_j \rangle=(\lambda_i - \lambda_k ) \langle \nabla_{E_j} E_i, E_k\rangle +\nabla_{E_j} \langle E_k, {\mathcal{A}} E_i \rangle, \ \ $

 for any $i,j,k =1, \cdots n$.

\smallskip
\noindent {\rm (ii)}  If $k \neq i$ and $k \neq j$,
$ \ \ (\lambda_j - \lambda_k ) \langle \nabla_{E_i} E_j, E_k\rangle=(\lambda_i - \lambda_k ) \langle \nabla_{E_j} E_i, E_k\rangle .$

\end{lemma}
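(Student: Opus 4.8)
The plan is to read both identities straight off the Codazzi equation for $\mathcal{A} = Rc - \frac{R}{2n-2}g$, which is a Codazzi tensor precisely because $\delta W = 0$; that is, $(\nabla_{E_i}\mathcal{A})(E_j,E_k) = (\nabla_{E_j}\mathcal{A})(E_i,E_k)$ for the frame $E_i$. The only work is to expand the covariant derivative of $\mathcal{A}$ by the Leibniz rule and feed in the fact that the $E_i$ are Ricci-eigenvectors.

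First I would record the two observations that make the computation collapse. Since $Rc(E_i,\cdot) = \lambda_i g(E_i,\cdot)$, we have $\mathcal{A}E_i = \mu_i E_i$ with $\mu_i := \lambda_i - \frac{R}{2n-2}$, so $\langle E_k, \mathcal{A}E_j\rangle = \mu_j \delta_{jk}$ is a bona fide smooth function on the domain of the frame (it does not require $\lambda_j$ to be separately differentiable), and the scalar differences satisfy $\mu_j - \mu_k = \lambda_j - \lambda_k$. Second, orthonormality $\langle E_j, E_k\rangle = \delta_{jk}$ yields $\langle \nabla_{E_i}E_j, E_k\rangle = -\langle E_j, \nabla_{E_i}E_k\rangle$ for every $i$.

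Now expand
\[
(\nabla_{E_i}\mathcal{A})(E_j,E_k) = E_i\langle E_k, \mathcal{A}E_j\rangle - \langle \nabla_{E_i}E_j, \mathcal{A}E_k\rangle - \langle \mathcal{A}E_j, \nabla_{E_i}E_k\rangle .
\]
Substituting $\mathcal{A}E_k = \mu_k E_k$ and $\mathcal{A}E_j = \mu_j E_j$ and using the orthonormality relation, the last two terms combine into $(\mu_j - \mu_k)\langle \nabla_{E_i}E_j, E_k\rangle = (\lambda_j - \lambda_k)\langle \nabla_{E_i}E_j, E_k\rangle$, so
\[
(\nabla_{E_i}\mathcal{A})(E_j,E_k) = \nabla_{E_i}\langle E_k, \mathcal{A}E_j\rangle + (\lambda_j - \lambda_k)\langle \nabla_{E_i}E_j, E_k\rangle .
\]
Interchanging $i$ and $j$ and equating the two expressions via the Codazzi equation gives (i) verbatim. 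For (ii), if $k \neq i$ and $k \neq j$ then $\langle E_k, \mathcal{A}E_j\rangle = \mu_j\delta_{jk} = 0$ and $\langle E_k, \mathcal{A}E_i\rangle = \mu_i\delta_{ik} = 0$ identically, so both scalar-derivative terms in (i) drop out and one is left with $(\lambda_j - \lambda_k)\langle \nabla_{E_i}E_j, E_k\rangle = (\lambda_i - \lambda_k)\langle \nabla_{E_j}E_i, E_k\rangle$.

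There is essentially no obstacle here beyond sign bookkeeping in the Leibniz expansion; the one point I would be careful about is exactly the one noted above, namely that $\langle E_k,\mathcal{A}E_j\rangle$ is the honest smooth function $\mu_j\delta_{jk}$ so that $\nabla_{E_i}$ of it is unambiguous (the $E_i$ and $\mathcal{A}$ are smooth). It is worth remarking that this is the promised extension of Derdziński's Lemma \ref{derdlem}(i): specializing (i) to $k = j$ with $i \neq j$ gives $E_i(\mu_j) = (\mu_j - \mu_i)\langle \nabla_{E_j}E_j, E_i\rangle$, which is exactly $v(\mu) = (\mu - \lambda)\langle \nabla_u u, v\rangle$; the gain is that we may use an arbitrary orthonormal Ricci-eigenframe $\{E_i\}$, even when some eigenvalues coincide, rather than an eigenbasis of $\mathcal{A}$.
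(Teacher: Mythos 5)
Your proof is correct and follows essentially the same route as the paper: expand $(\nabla_{E_i}\mathcal{A})(E_j,E_k)$ by the Leibniz rule, use orthonormality and the eigenvector property to collapse the two connection terms into $(\lambda_j-\lambda_k)\langle\nabla_{E_i}E_j,E_k\rangle$, and then invoke the Codazzi identity, with (ii) following because $\langle E_k,\mathcal{A}E_j\rangle$ and $\langle E_k,\mathcal{A}E_i\rangle$ vanish identically when $k\neq i,j$. The closing remark identifying the $k=j$ specialization with Derdzi\'nski's formula is a correct and pleasant observation not made explicit in the paper.
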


\begin{proof}  The tensor ${\mathcal{A}}= Rc - \frac{R}{2n-2} g$ is a Codazzi tensor with
   eigenfunctions $\lambda_i -  \frac{R}{2n-2} $. We have
\begin{eqnarray*}
\langle (\nabla_{E_i}{\mathcal{A}}) E_j    , E_k \rangle =  - \langle \nabla_{E_i} E_j, {\mathcal{A}} E_k\rangle  -   \langle \nabla_{E_i} E_k, {\mathcal{A}}E_j\rangle  + \nabla_{E_i}\langle  E_k, {\mathcal{A}}E_j\rangle \hspace{1.5cm} \\
 = - (\lambda_k -  \frac{R}{2n-2})\langle \nabla_{E_i} E_j, E_k\rangle  -  ( \lambda_j -  \frac{R}{2n-2} )\langle \nabla_{E_i} E_k, E_j\rangle  + \nabla_{E_i}\langle  E_k, {\mathcal{A}}E_j\rangle \\
 = (\lambda_j - \lambda_k ) \langle \nabla_{E_i} E_j, E_k\rangle   + \nabla_{E_i}\langle  E_k, {\mathcal{A}}E_j\rangle.\hspace{5.7cm}
\end{eqnarray*}
As ${\mathcal{A}}$ is a Codazzi tensor,
$\langle(\nabla_{E_i}{\mathcal{A}}) E_j    , E_k\rangle =\langle(\nabla_{E_j}{\mathcal{A}}) E_i    , E_k\rangle$. So, we get {\rm (i)}.
Then  {\rm (ii)} holds since  $\nabla_{E_i} \langle  E_k, {\mathcal{A}}E_j \rangle =\nabla_{E_j} \langle E_k, {\mathcal{A}}E_i \rangle =0$.
\end{proof}

\begin{lemma} \label{abc60b} For a  gradient Ricci soliton  $(M,g,f)$ with harmonic Weyl curvature, and for a local adapted frame field $\{ E_i \}$ in $M_{\mathcal{A}} \cap \{ \nabla f \neq 0  \}$, the following holds.

\medskip
For $i,j,k >1$, with $k \neq i$ and $k \neq j$,  setting $\Gamma^k_{ij}:= < \nabla_{E_i} E_j,  E_k>$,

$ (\zeta_k - \zeta_j ) \Gamma^k_{ij}=(\zeta_k - \zeta_i ) \Gamma^k_{ji}, \ \  $  $(\zeta_k - \zeta_j ) \Gamma^k_{ij}=(\zeta_i - \zeta_j ) \Gamma^i_{kj} \ $
and
$ \  \    \Gamma^k_{ij} =  - \Gamma^j_{ik}$.
\end{lemma}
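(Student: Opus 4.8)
The plan is to obtain all three relations from Lemma \ref{abc60}(ii) together with the skew-symmetry of the connection coefficients of the orthonormal adapted frame, after rewriting Ricci-eigenvalue differences in terms of the $\zeta_i$ via (\ref{lambda06}). By Lemma \ref{threesolb01} we have $\zeta_i = \frac{1}{|\nabla f|}(\lambda - \lambda_i)$ for $i > 1$, hence $\zeta_k - \zeta_j = \frac{1}{|\nabla f|}(\lambda_j - \lambda_k)$ and $\zeta_k - \zeta_i = \frac{1}{|\nabla f|}(\lambda_i - \lambda_k)$; since $|\nabla f| \neq 0$ on $M_{\mathcal{A}} \cap \{\nabla f \neq 0\}$, each claimed identity is equivalent to the one obtained by replacing every $\zeta$-difference by the corresponding $\lambda$-difference.

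The relation $\Gamma^k_{ij} = -\Gamma^j_{ik}$ is immediate metric compatibility: because $k \neq j$ the function $\langle E_j, E_k\rangle$ vanishes identically, so $0 = E_i\langle E_j, E_k\rangle = \langle \nabla_{E_i} E_j, E_k\rangle + \langle E_j, \nabla_{E_i} E_k\rangle = \Gamma^k_{ij} + \Gamma^j_{ik}$. For the first identity I would apply Lemma \ref{abc60}(ii) with the index triple $(i,j,k)$ — legitimate since $k \neq i$ and $k \neq j$ — to get $(\lambda_j - \lambda_k)\Gamma^k_{ij} = (\lambda_i - \lambda_k)\Gamma^k_{ji}$, and then divide by $|\nabla f|$ and use the translation above to obtain $(\zeta_k - \zeta_j)\Gamma^k_{ij} = (\zeta_k - \zeta_i)\Gamma^k_{ji}$.

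For the second identity I would combine the first two. Applying Lemma \ref{abc60}(ii) with the triple $(i,k,j)$, which requires $j \neq i$ and $j \neq k$, yields $(\lambda_k - \lambda_j)\Gamma^j_{ik} = (\lambda_i - \lambda_j)\Gamma^j_{ki}$; substituting $\Gamma^j_{ik} = -\Gamma^k_{ij}$ and $\Gamma^j_{ki} = -\Gamma^i_{kj}$ (skew-symmetry, valid since $k \neq j$ and $i \neq j$) and dividing by $|\nabla f|$ gives $(\zeta_k - \zeta_j)\Gamma^k_{ij} = (\zeta_i - \zeta_j)\Gamma^i_{kj}$. The only situation not covered by this argument is $i = j$ (which the hypotheses allow), where the claim reduces to $(\zeta_k - \zeta_i)\Gamma^k_{ii} = 0$: this is vacuous when $\zeta_k = \zeta_i$, and when $\lambda_k \neq \lambda_i$ it follows by applying Derdzi\'{n}ski's Lemma \ref{derdlem}(i) to the Codazzi tensor $\mathcal{A}$ with $u = E_i$, $v = E_k$, whose left-hand side is $E_k\bigl(\lambda_i - \frac{R}{6}\bigr)$ and hence vanishes by Lemma \ref{raas}. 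I do not expect a real obstacle here; the only thing to watch is the index bookkeeping, i.e. checking that the required non-coincidences of indices hold each time Lemma \ref{abc60}(ii) or the frame skew-symmetry is invoked, together with isolating the degenerate case $i=j$.
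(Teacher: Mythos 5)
Your proof is correct and follows essentially the same route as the paper, which simply cites (\ref{lambda06}) together with Lemma \ref{abc60}(ii) for the first identity and says the others "hold readily" (i.e.\ via the metric-compatibility skew-symmetry $\Gamma^k_{ij}=-\Gamma^j_{ik}$ and a second application of Lemma \ref{abc60}(ii) with the roles of $j$ and $k$ swapped, exactly as you do). Your extra care with the degenerate case $i=j$, handled via Lemma \ref{derdlem}(i) and Lemma \ref{raas}, is a legitimate point the paper glosses over, and your treatment of it is sound.
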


\begin{proof}
From (\ref{lambda06}) and Lemma \ref{abc60}, $(\zeta_k - \zeta_j ) \Gamma^k_{ij}=(\zeta_k - \zeta_i ) \Gamma^k_{ji}$.  Others hold readily.
\end{proof}

\section{4-dimensional solitons with distinct  $\lambda_2, \lambda_3, \lambda_4$}

Let $(M,g,f)$ be a four dimensional gradient Ricci soliton with harmonic Weyl curvature and non constant $f$.
In a neighborhood of any point in the open dense subset $M_{\mathcal{A}} \cap \{ \nabla f \neq 0  \}$ of $M$, there exists
an adapted frame field $E_j$, $j=1,2,3,4$, of Lemma \ref{aa8} with its eigenfunction $\lambda_j$

We may only consider three cases depending on the distinctiveness of $\lambda_2,\lambda_3,\lambda_4$;
the first case is when  $ \lambda_i$, $i=2,3,4$ are all equal (on an open subset),
 and  the second  is when exactly two of the three are equal. And the last is when the three  $ \lambda_i$, $i=2,3,4$, are mutually different.

In this section we shall study the last case.
\begin{lemma}\label{77}
Let $(M,g,f)$ be a four dimensional gradient Ricci soliton with harmonic Weyl curvature and non constant $f$.
Suppose that for an adapted frame fields $E_j$, $j=1,2,3,4$,
in an open subset $W$ of $M_{\mathcal{A}} \cap \{ \nabla f \neq 0  \}$,   the eigenfunctions $\lambda_2, \lambda_3, \lambda_4$ are distinct from each other. Then the following hold in $W$;

\medskip
\noindent for $i, j >1$, $i \neq j$,

$\nabla_{E_1}  E_1 =0$, $\ \ \ \ \ \nabla_{E_i} E_1 = \zeta_i E_i $,
$\ \ \ \ \ \  \nabla_{E_i}  E_i = -\zeta_i E_1  $,  $\ \ \ \ \ \nabla_{E_1} E_i=0$.

 $\nabla_{E_i} E_j= \Gamma_{ij}^k E_k$ where $k \neq 1,i,j. \ \ \ $

\end{lemma}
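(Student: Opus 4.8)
The plan is to extract all the covariant-derivative identities from the Codazzi structure of $\mathcal{A}$ together with Lemma~\ref{threesolb01} and Lemma~\ref{raas}. We already have $\nabla_{E_1}E_1=0$ from Lemma~\ref{threesolb01}, and the formula $\nabla_{E_i}E_1 = \frac{1}{|\nabla f|}(\lambda-\lambda_i)E_i = \zeta_i E_i$ for $i>1$ from the same lemma; taking the inner product of $\nabla_{E_i}E_i$ with $E_1$ and with $E_j$ (for $j\neq 1,i$) gives $\nabla_{E_i}E_i = -\zeta_i E_1 + (\text{terms in }E_j)$, so the first task is to kill those $E_j$-components. For this I would use Lemma~\ref{abc60}(ii) with the index pattern $(i,i,j)$: since $E_i$ is a unit Ricci-eigenvector and $j\neq i$, the lemma yields $(\lambda_i-\lambda_j)\langle\nabla_{E_i}E_i,E_j\rangle = (\lambda_i-\lambda_j)\langle\nabla_{E_i}E_i,E_j\rangle$, which is vacuous — so instead I pair indices as in Lemma~\ref{abc60b}: the relation $\Gamma^k_{ij}=-\Gamma^j_{ik}$ lets me relate $\langle\nabla_{E_i}E_i,E_j\rangle$ to $\langle\nabla_{E_i}E_j,E_i\rangle$, and then apply Lemma~\ref{derdlem}(i) or Lemma~\ref{abc60}(i) with the extra input from Lemma~\ref{raas} that $E_i(\lambda_j)=0$ for $i>1$. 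Because the three eigenvalues $\lambda_2,\lambda_3,\lambda_4$ are pairwise distinct and also distinct from $\lambda_1$ (if $\lambda_1$ coincided with one of them we'd be in a different case — but even allowing it, $\lambda_1$ plays the role of the $E_1$-direction), the nonvanishing eigenvalue gaps let me divide through and conclude the unwanted components vanish.

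Concretely, for $\nabla_{E_1}E_i=0$: write $\nabla_{E_1}E_i = a\,E_1 + \sum_{k\neq 1,i}\Gamma^k_{1i}E_k$. The $E_1$-component is $\langle\nabla_{E_1}E_i,E_1\rangle = -\langle E_i,\nabla_{E_1}E_1\rangle = 0$. For the $E_k$-component with $k\neq 1,i$, apply Lemma~\ref{abc60b}'s first relation in the form coming from Lemma~\ref{abc60}(ii) with indices $(1,i,k)$: $(\lambda_i-\lambda_k)\langle\nabla_{E_1}E_i,E_k\rangle = (\lambda_1-\lambda_k)\langle\nabla_{E_i}E_1,E_k\rangle = (\lambda_1-\lambda_k)\zeta_i\langle E_i,E_k\rangle = 0$, and since $\lambda_i\neq\lambda_k$ we get $\Gamma^k_{1i}=0$. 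The same index pattern $(i,j,k)$ with all of $i,j,k$ in $\{2,3,4\}$ distinct shows that $\nabla_{E_i}E_j$ has vanishing $E_1$-component (that component is $-\langle E_j,\nabla_{E_i}E_1\rangle = -\zeta_i\langle E_j,E_i\rangle=0$) and vanishing $E_i$- and $E_j$-components (the latter because $\langle\nabla_{E_i}E_j,E_j\rangle = \tfrac12 E_i|E_j|^2=0$, the former by $\Gamma^i_{ij}=-\Gamma^j_{ii}=-\tfrac12 E_i|E_i|^2=0$), leaving only $\nabla_{E_i}E_j=\Gamma^k_{ij}E_k$ with $k$ the unique index in $\{2,3,4\}\setminus\{i,j\}$. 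Finally $\nabla_{E_i}E_i=-\zeta_iE_1$ follows because its $E_1$-component is $-\langle E_i,\nabla_{E_i}E_1\rangle=-\zeta_i$ and, by the argument just given (the relation $\Gamma^k_{ij}=-\Gamma^j_{ik}$ applied with $j=i$), each $E_k$-component ($k\neq 1,i$) equals $-\langle\nabla_{E_i}E_k,E_i\rangle=-\Gamma^i_{ik}$, which is one of the Christoffel symbols just shown to vanish, or alternatively use Lemma~\ref{derdlem}(ii): the $\lambda_i$-eigenspace distribution is one-dimensional, hence totally geodesic modulo the mean-curvature-type term in the normal direction, forcing $\nabla_{E_i}E_i\in\mathrm{span}(E_1)$.

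The main obstacle I anticipate is the careful bookkeeping of which eigenvalue gaps are available: Lemma~\ref{abc60}(ii) requires $k\neq i$ and $k\neq j$, so one must always arrange the "free" index to be the one labeling a distinct eigenvalue, and one must make sure that in the relevant triples the pair of eigenvalues being differenced is genuinely unequal — this is exactly where the hypothesis that $\lambda_2,\lambda_3,\lambda_4$ are pairwise distinct is essential, since in dimension four the index $k$ in $\nabla_{E_i}E_j$ is forced to be the third element of $\{2,3,4\}$ and there is no room to maneuver otherwise. A secondary point requiring care is that Lemma~\ref{raas}'s conclusion $E_i(\lambda_j)=0$ for $i>1$ is what makes the $\nabla_{E_i}\langle E_k,\mathcal{A}E_j\rangle$ terms in Lemma~\ref{abc60}(i) drop out when we need the cleaner divided form; without it the identities would carry extra derivative-of-eigenvalue terms. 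Once these are handled, everything else is a short orthonormality-and-antisymmetry computation.
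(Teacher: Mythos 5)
Your overall route is the paper's: Lemma \ref{threesolb01} gives $\nabla_{E_1}E_1=0$ and $\nabla_{E_i}E_1=\zeta_iE_i$, orthonormality kills the components along the differentiated field, Lemma \ref{abc60}(ii) applied to the triple $(1,i,k)$ together with the gap $\lambda_i\neq\lambda_k$ gives $\nabla_{E_1}E_i=0$, and the pairwise distinctness of $\lambda_2,\lambda_3,\lambda_4$ is used to kill the remaining cross terms. The one step that is wrong as written is your justification that the $E_j$-component of $\nabla_{E_i}E_i$ vanishes (equivalently that $\Gamma^i_{ij}=0$). You write $\Gamma^i_{ij}=-\Gamma^j_{ii}=-\tfrac12 E_i|E_i|^2=0$, but $\tfrac12 E_i|E_i|^2$ equals $\langle\nabla_{E_i}E_i,E_i\rangle$, the $E_i$-component of $\nabla_{E_i}E_i$, not the $E_j$-component $\Gamma^j_{ii}=\langle\nabla_{E_i}E_i,E_j\rangle$, which is precisely the quantity that does not vanish for free. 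Your two fallbacks also fail: the antisymmetry $\Gamma^j_{ii}=-\Gamma^i_{ij}$ only says the two unknowns are negatives of each other, so invoking it is circular; and Lemma \ref{derdlem}(ii) gives nothing here, because a one-dimensional distribution has curves as leaves and these are automatically totally umbilic, so umbilicity places no constraint on the direction of $\nabla_{E_i}E_i$.

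The correct argument --- which you do name in your opening plan but then abandon in the ``concretely'' paragraph --- is Derdzi\'{n}ski's Lemma \ref{derdlem}(i) applied to the Codazzi tensor $\mathcal{A}$ with $u=E_i$, $v=E_j$: it gives $E_j\bigl(\lambda_i-\tfrac{R}{6}\bigr)=(\lambda_i-\lambda_j)\langle\nabla_{E_i}E_i,E_j\rangle$, and since Lemma \ref{raas} gives $E_j(\lambda_i)=E_j(R)=0$ for $j>1$ while $\lambda_i\neq\lambda_j$ by hypothesis, the bracket vanishes. This is exactly how the paper obtains $\nabla_{E_i}E_i=-\zeta_iE_1$, after which $\Gamma^i_{ij}=-\langle E_j,\nabla_{E_i}E_i\rangle=0$ follows and your decomposition $\nabla_{E_i}E_j=\Gamma^k_{ij}E_k$ with $k\neq 1,i,j$ closes. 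With that substitution the proof is complete and coincides with the paper's.
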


\begin{proof}
From Lemma \ref{threesolb01} we have  $\nabla_{E_1}  E_1 =0$ and $\nabla_{E_i} E_1 = \zeta_i E_i$. From Lemma  \ref{derdlem} (i) and Lemma \ref{raas},
 $ \langle  \nabla_{E_i} E_i ,   E_j\rangle=0$.
  And  $ \langle  \nabla_{E_i} E_i ,   E_1\rangle= - \langle   E_i ,   \nabla_{E_i} E_1\rangle= - \zeta_i$. So, we get  $\nabla_{E_i}  E_i = -\zeta_i E_1  $.
Now, $ -\langle  \nabla_{E_i} E_j, E_i \rangle=0$,
 $\langle  \nabla_{E_i} E_j, E_j \rangle=0  $.  And $\langle\nabla_{E_i} E_j, E_1 \rangle = -\langle   \nabla_{E_i} E_1 ,  E_j\rangle =0 $. So,  $\nabla_{E_i} E_j= \Gamma_{ij}^k E_k$ where $k \neq 1,i,j$.  Clearly  $\Gamma_{ij}^k =- \Gamma_{ik}^j $.

From Lemma  \ref{abc60} {\rm (ii)}, $(\lambda_i - \lambda_j ) \langle \nabla_{E_1} E_i, E_j\rangle=(\lambda_1 - \lambda_j ) \langle \nabla_{E_i} E_1, E_j\rangle $.  As $\langle \nabla_{E_i} E_1, E_j\rangle=0 $,  $\langle\nabla_{E_1} E_i, E_j\rangle=0$.     This gives $\nabla_{E_1} E_i =0$.
\end{proof}

\noindent
From above Lemma, we may write
\begin{equation} \label{coeff01}
[E_2, E_3] = \alpha E_4, \ \ \  [E_3, E_4] = \beta E_2,  \ \ \ [E_4, E_2] = \gamma E_3.
\end{equation}

\begin{lemma} \label{4dformb} Under the hypothesis of Lemma \ref{77}, we have the following relation on $\zeta_i$'s and the coefficients of {\rm (\ref{coeff01})}.
\begin{eqnarray*}
E_1(\alpha) = \alpha ( \zeta_4 -  \zeta_2 - \zeta_3    ),  \ \  \    E_1(\beta) = \beta ( \zeta_2 - \zeta_3 - \zeta_4    ), \  \      E_1(\gamma) = \gamma ( \zeta_3 -  \zeta_2 - \zeta_4    ) \\
\beta =   \frac{(\zeta_3 - \zeta_4   )^2}{(\zeta_2 - \zeta_3   )^2} \alpha,   \ \ \ \ \ \ \   \ \ \ \ \  \gamma =   \frac{(\zeta_2 - \zeta_4   )^2}{(\zeta_2 - \zeta_3   )^2} \alpha. \hspace{5.5cm}
\end{eqnarray*}
\end{lemma}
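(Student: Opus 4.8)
The plan is to read off the claimed relations from the structure of the adapted frame recorded in Lemma \ref{77}, the Codazzi-type identities of Lemma \ref{abc60b}, the vanishing $E_i(\zeta_j)=0$ for $i>1$ (Lemma \ref{raas}), and the Jacobi identity.

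First I would encode the connection in three scalars. By Lemma \ref{77}, $\nabla_{E_i}E_j=\Gamma^k_{ij}E_k$ with $\{i,j,k\}=\{2,3,4\}$; put $p:=\Gamma^4_{23}$, $q:=\Gamma^2_{34}$, $r:=\Gamma^3_{42}$. The antisymmetry $\Gamma^k_{ij}=-\Gamma^j_{ik}$ of Lemma \ref{abc60b} gives $\Gamma^4_{32}=-q$, $\Gamma^2_{43}=-r$, $\Gamma^3_{24}=-p$, so that $[E_i,E_j]=\nabla_{E_i}E_j-\nabla_{E_j}E_i$ together with (\ref{coeff01}) yields
\[
\alpha=p+q,\qquad \beta=q+r,\qquad \gamma=r+p.
\]

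Next I would plug the weighted identity $(\zeta_k-\zeta_j)\Gamma^k_{ij}=(\zeta_k-\zeta_i)\Gamma^k_{ji}$ of Lemma \ref{abc60b} into the cyclic triples $(i,j,k)=(2,3,4),(3,4,2),(4,2,3)$. Since $\lambda_2,\lambda_3,\lambda_4$ are distinct, (\ref{lambda06}) shows $\zeta_2,\zeta_3,\zeta_4$ are distinct, so no denominator below vanishes. The three instances read
\[
(\zeta_4-\zeta_3)p=-(\zeta_4-\zeta_2)q,\qquad (\zeta_2-\zeta_4)q=-(\zeta_2-\zeta_3)r,\qquad (\zeta_3-\zeta_2)r=-(\zeta_3-\zeta_4)p,
\]
and any two of them imply the third; they give $q=\frac{\zeta_3-\zeta_4}{\zeta_4-\zeta_2}p$ and $r=\frac{\zeta_3-\zeta_4}{\zeta_2-\zeta_3}p$. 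Substituting these into the formulas of the previous paragraph and simplifying,
\[
\alpha=\frac{\zeta_3-\zeta_2}{\zeta_4-\zeta_2}\,p,\qquad \beta=\frac{(\zeta_3-\zeta_4)^2}{(\zeta_2-\zeta_4)(\zeta_2-\zeta_3)}\,p,\qquad \gamma=\frac{\zeta_2-\zeta_4}{\zeta_2-\zeta_3}\,p,
\]
and dividing the last two by the first (if $p=0$, then $\alpha=\beta=\gamma=0$ and the identities are trivial) gives precisely $\beta=\frac{(\zeta_3-\zeta_4)^2}{(\zeta_2-\zeta_3)^2}\alpha$ and $\gamma=\frac{(\zeta_2-\zeta_4)^2}{(\zeta_2-\zeta_3)^2}\alpha$.

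Finally, for the derivatives along $E_1$ I would note that Lemma \ref{77} gives $[E_1,E_i]=\nabla_{E_1}E_i-\nabla_{E_i}E_1=-\zeta_iE_i$ for $i>1$. Feeding this and (\ref{coeff01}) into the Jacobi identity $[E_1,[E_2,E_3]]=[[E_1,E_2],E_3]+[E_2,[E_1,E_3]]$, and discarding the terms $(E_3\zeta_2)E_2$ and $(E_2\zeta_3)E_3$ which vanish by Lemma \ref{raas}, the $E_4$-component becomes $E_1(\alpha)-\zeta_4\alpha=-\zeta_2\alpha-\zeta_3\alpha$, i.e.\ $E_1(\alpha)=\alpha(\zeta_4-\zeta_2-\zeta_3)$; the two cyclic variants, obtained by applying the Jacobi identity to $[E_1,[E_3,E_4]]$ and to $[E_1,[E_4,E_2]]$, give the stated formulas for $E_1(\beta)$ and $E_1(\gamma)$.

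The only delicate point I foresee is the bookkeeping in the second step: keeping the sign conventions for the $\Gamma^k_{ij}$ straight and checking that the three weighted Codazzi relations are mutually consistent rather than overdetermined. There is no genuine analytic obstacle.
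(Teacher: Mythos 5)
Your proof is correct and follows essentially the same route as the paper: the Jacobi identity with $[E_1,E_i]=-\zeta_iE_i$ and $E_i(\zeta_j)=0$ gives the derivative formulas, and Lemma \ref{abc60b} gives the ratio formulas. The only difference is bookkeeping — you solve for the connection coefficients $\Gamma^4_{23},\Gamma^2_{34},\Gamma^3_{42}$ first and then assemble $\alpha,\beta,\gamma$, whereas the paper uses the Koszul formula to express the $\Gamma$'s in terms of $\alpha,\beta,\gamma$ and manipulates the resulting three symmetric relations; the two computations are inverse to each other and yield the same identities.
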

\begin{proof}
From Jacobi identity $[[X, Y], Z]   +   [[Y, Z], X] + [[Z, X], Y]=0  $ applied to $(X, Y, Z) = (E_1, E_2, E_3)$ gives
$E_1(\alpha) = \alpha ( \zeta_4 - \zeta_2 - \zeta_3    )$. Apply it to  $E_1, E_2, E_4$ and  $E_1, E_3, E_4$, we get  the next two.

Using $2 \langle  \nabla_X Y, Z \rangle  =  X \langle Y, Z  \rangle   + Y \langle  X, Z \rangle  - Z  \langle  X,Y \rangle    +\langle [X,Y], Z \rangle  -  \langle [X,Z], Y \rangle -  \langle [Y,Z], X \rangle  $ for vector fields $X,Y, Z$, from Lemma \ref{abc60b} we get;

$ \frac{-\alpha - \gamma + \beta}{2}= \Gamma_{24}^3 =  \frac{( \zeta_2 - \zeta_4 )}{ \zeta_3 - \zeta_4   }\Gamma_{34}^2    =\frac{( \zeta_2 - \zeta_4 )}{ \zeta_3 - \zeta_4   }\frac{\alpha - \gamma + \beta}{2}. \ \  $
 So,
$-\alpha - \gamma + \beta= \frac{( \zeta_2 - \zeta_4 )}{ \zeta_3 - \zeta_4   }(\alpha - \gamma + \beta).$
By symmetry we have, $-\beta - \alpha + \gamma= \frac{( \zeta_3 - \zeta_2 )}{ \zeta_4 - \zeta_2   }(\beta - \alpha + \gamma) $
and $-\gamma - \beta + \alpha= \frac{( \zeta_4 - \zeta_3 )}{ \zeta_2 - \zeta_3   }(\gamma- \beta + \alpha).$
From these, we can get the other formulas.
\end{proof}

\begin{lemma} \label{4dform}
Let a four dimensional gradient Ricci soliton $(M,g,f)$ with harmonic Weyl curvature satisfy the hypothesis of Lemma \ref{77}. Then the following hold in $W$;

\medskip
\noindent For distinct $\ i, j, k>1$,
$ \  R_{1ii1} =-\zeta_i^{'}  -  \zeta_i^2 =R_{1jj1}, $ where $\zeta_i^{'}= \frac{d \zeta_i}{ds}$,  $\ \     R_{1ij1}= 0$.

$R_{11} =  -3\zeta_2^{'}
 -  3\zeta_2^2  .$

$R_{22} = - \zeta_2^{'}
 -  \zeta_2^2 -\zeta_2 \zeta_3
 -\zeta_2 \zeta_4  -2 \Gamma_{34}^2 \Gamma_{43}^2 .$

$R_{33} = -\zeta_3^{'}
 -  \zeta_3^2 -\zeta_3 \zeta_2
 -\zeta_3 \zeta_4  +2 \frac{( \zeta_2 - \zeta_4 )}{ \zeta_3 - \zeta_4   }\Gamma_{34}^2 \Gamma_{43}^2 .$

$R_{44} = -\zeta_4^{'}
 -  \zeta_4^2 -\zeta_4 \zeta_2
 -\zeta_4 \zeta_3  +2\frac{( \zeta_2 - \zeta_3 )}{ \zeta_4 - \zeta_3   } \Gamma_{34}^2 \Gamma_{43}^2 .$

$R_{1i}=0,$  $\ \ \ \ \ \ \ R_{ij} = E_k (\Gamma^k_{ij}).$

\end{lemma}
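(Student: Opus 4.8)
The plan is to substitute the connection data of Lemma~\ref{77} and the bracket relations~(\ref{coeff01}) into the curvature formula
\[
R_{ijkl}=\langle \nabla_{E_i}\nabla_{E_j}E_k-\nabla_{E_j}\nabla_{E_i}E_k-\nabla_{[E_i,E_j]}E_k,\ E_l\rangle,
\]
and then to read off the Ricci entries from $R_{jk}=\sum_{l}R_{ljkl}$. Throughout I would use Lemma~\ref{raas}, so that each $\zeta_i$ is a function of $s$ only; thus $E_1(\zeta_i)=\zeta_i'$ while $E_m(\zeta_i)=0$ for $m>1$, and also $[E_1,E_i]=\nabla_{E_1}E_i-\nabla_{E_i}E_1=-\zeta_iE_i$.

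First I would handle the mixed radial curvatures. From $\nabla_{E_1}E_1=0$ and $\nabla_{E_1}E_i=0$ one gets $\nabla_{E_1}\nabla_{E_i}E_i=\nabla_{E_1}(-\zeta_iE_1)=-\zeta_i'E_1$ and $\nabla_{[E_1,E_i]}E_i=-\zeta_i\nabla_{E_i}E_i=\zeta_i^2E_1$, hence $R_{1ii1}=-\zeta_i'-\zeta_i^2$; carrying out the same computation with a distinct $E_j$ in the third slot leaves only vectors orthogonal to $E_1$, so $R_{1ij1}=0$. By~(\ref{0110g}) (equivalently, Lemma~\ref{threesol}) all of the $R_{1ii1}$, $i>1$, agree, and therefore $R_{11}=\sum_{i>1}R_{1ii1}=-3\zeta_2'-3\zeta_2^2$.

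Next come the horizontal curvatures $R_{ijji}$ for distinct $i,j>1$. Expanding with $\nabla_{E_j}E_j=-\zeta_jE_1$, $\nabla_{E_i}E_1=\zeta_iE_i$, $\nabla_{E_i}E_j=\Gamma_{ij}^kE_k$ (with $k\neq1,i,j$) and~(\ref{coeff01}), each $R_{ijji}$ comes out as $-\zeta_i\zeta_j$ plus a quadratic expression in the structure functions $\alpha,\beta,\gamma$ (equivalently in the $\Gamma_{ij}^k$). Summing $R_{jj}=R_{1jj1}+\sum_{i>1,\,i\neq j}R_{ijji}$ and expressing every Christoffel symbol through $\alpha,\beta,\gamma$ by the Koszul formula exactly as in the proof of Lemma~\ref{4dformb}, the quadratic part of the $E_2$-row collapses to $-2\Gamma_{34}^2\Gamma_{43}^2$, which gives $R_{22}$. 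For the $E_3$- and $E_4$-rows I would then invoke the identities recorded in Lemma~\ref{4dformb}, in particular $\Gamma_{24}^3=\frac{\zeta_2-\zeta_4}{\zeta_3-\zeta_4}\Gamma_{34}^2$ and its cyclic analogue together with $\Gamma_{ij}^k=-\Gamma_{ik}^j$; these rewrite the surviving product as $2\frac{\zeta_2-\zeta_4}{\zeta_3-\zeta_4}\Gamma_{34}^2\Gamma_{43}^2$, respectively $2\frac{\zeta_2-\zeta_3}{\zeta_4-\zeta_3}\Gamma_{34}^2\Gamma_{43}^2$, yielding $R_{33}$ and $R_{44}$.

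Finally, $R_{1i}=0$ and $R_{ij}=0$ for $i\neq j$ are immediate, since the adapted frame diagonalizes $Rc$ by construction. Independently, in $R_{ij}=\sum_l R_{lijl}$ every term drops out by the connection relations of Lemma~\ref{77} except the one with $l$ the remaining index $k$, and a short computation using $\nabla_{E_k}E_k=-\zeta_kE_1$, $\nabla_{E_k}E_j=\Gamma_{kj}^iE_i$, and $[E_k,E_i]$ a multiple of $E_j$ gives $R_{kijk}=E_k(\Gamma_{ij}^k)$, so $R_{ij}=E_k(\Gamma_{ij}^k)$. I expect the only real obstacle to be the sign-and-index bookkeeping in the $\Gamma$-quadratic simplification of the third step — ensuring the asymmetric coefficients $\frac{\zeta_2-\zeta_4}{\zeta_3-\zeta_4}$ and $\frac{\zeta_2-\zeta_3}{\zeta_4-\zeta_3}$ appear exactly as stated — but this is entirely controlled by the identities already proved in Lemmas~\ref{4dformb} and~\ref{abc60b}.
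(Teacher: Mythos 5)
Your proposal is correct and follows essentially the same route as the paper: a direct computation of the curvature components from the connection formulas of Lemma~\ref{77}, the identification $R_{1ii1}=R_{1jj1}$ via (\ref{0110g}), and the use of Lemma~\ref{abc60b} (equivalently the identities recorded in Lemma~\ref{4dformb}) to convert the $\Gamma$-products in the $E_3$- and $E_4$-rows into the stated multiples of $\Gamma_{34}^2\Gamma_{43}^2$. Nothing essential is missing.
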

\begin{proof} One uses Lemma \ref{77} and Lemma \ref{raas}. Recall $ R_{1ii1}  =R_{1jj1} $ from (\ref{0110g}).
By direct computation we get $ R_{1ii1} =-\zeta_i^{'}  -  \zeta_i^2$, $R_{jiij} = -\zeta_j \zeta_i  - \Gamma_{ji}^k \Gamma_{ik}^j  - \Gamma_{ji}^k \Gamma_{ki}^j + \Gamma_{ij}^k \Gamma_{ki}^j$ and $ R_{kijk} = E_k (\Gamma^k_{ij})$.
Use  Lemma \ref{abc60b} to express $R_{33}$ and $R_{44}$.

\end{proof}

\noindent  Here we set $a:= \zeta_2$, $b:= \zeta_3$ and  $c:= \zeta_4$. From the soliton equation $\lambda- \zeta_i f^{'} = R_{ii}$, $i >1$ and Lemma \ref{4dform},

$ -(a - b)f^{'} =R_{22} - R_{33} =  (b  -a )c
 -2\{ 1+  \frac{( a - c )}{ b - c   }\}\Gamma_{34}^2 \Gamma_{43}^2   $.
So,
\begin{equation} \label{fpri}
 f^{'} = c  +2 \frac{( a+ b   - 2c )}{ (a - b)(b - c )  }\Gamma_{34}^2 \Gamma_{43}^2.
 \end{equation}
Similarly,
 $-(a - c)f^{'}=  (c  -a )b
 -2\{ 1+  \frac{( a - b )}{ c - b   }\}\Gamma_{34}^2 \Gamma_{43}^2   $.
 So,
\begin{equation} \label{fpr}
f^{'}=  b
 +2  \frac{( a + c - 2b )}{(a - c) (c - b )  }\Gamma_{34}^2 \Gamma_{43}^2.
 \end{equation}
From (\ref{fpri}) and (\ref{fpr}),  we get
\begin{equation} \label{fpr31}
4\Gamma_{34}^2 \Gamma_{43}^2 = \frac{(a - b)(a - c) (b - c )^2}{ ( a^2  +b^2+ c^2 - ab  -  bc - ac ) },
 \end{equation}

\begin{equation} \label{fpr2}
f^{'}=   \frac{a^2  b +   a^2 c  + a   b^2 +  a  c^2 +  b^2 c +   c^2   b   - 6a  b   c }{ 2 ( a^2  +b^2+ c^2 - ab  -  bc - ac ) }.
 \end{equation}

We are now ready to prove the following.

\begin{prop} \label{4dformc}
Let $(M,g,f)$ be a four dimensional gradient Ricci soliton with harmonic Weyl curvature and non constant $f$.
For any adapted frame field $E_j$, $j=1,2,3,4$,
in an open dense subset $M_{\mathcal{A}} \cap \{ \nabla f \neq 0  \}$ of $M$,
 the three eigenfunctions $\lambda_2, \lambda_3, \lambda_4$ cannot be  pairwise distinct, i.e. at least two of the three coincide.

\end{prop}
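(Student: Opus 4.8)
The plan is to show that in the pairwise-distinct case the structure equations become over-determined, and in fact force $|\nabla f|\equiv 0$, which is absurd on $\{\nabla f\neq 0\}$. Throughout I write $a=\zeta_2,\ b=\zeta_3,\ c=\zeta_4$; by Lemma \ref{raas} these are functions of $s$ only, and since $\zeta_i=(\lambda-\lambda_i)/|\nabla f|$ with $|\nabla f|\neq 0$ on $W$, the distinctness of $\lambda_2,\lambda_3,\lambda_4$ is equivalent to the distinctness of $a,b,c$. Consequently $a-b$, $b-c$, $c-a$, and $D:=a^2+b^2+c^2-ab-bc-ca=\tfrac12\bigl[(a-b)^2+(b-c)^2+(c-a)^2\bigr]$ are nowhere zero on $W$, which legitimizes every division that follows.

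First I would determine $\alpha^2$ as an explicit rational function of $a,b,c$. From the Koszul formula together with Lemma \ref{abc60b} one gets $\Gamma_{34}^2=\tfrac12(\alpha+\beta-\gamma)$ and $\Gamma_{43}^2=\tfrac12(\alpha-\beta-\gamma)$, whence $\Gamma_{34}^2\Gamma_{43}^2=\tfrac14\bigl[(\alpha-\gamma)^2-\beta^2\bigr]$. Plugging in $\beta=\tfrac{(b-c)^2}{(a-b)^2}\alpha$ and $\gamma=\tfrac{(a-c)^2}{(a-b)^2}\alpha$ from Lemma \ref{4dformb} and using the algebraic identity $\bigl((a-b)^2-(a-c)^2\bigr)^2-(b-c)^4=4(a-b)(a-c)(b-c)^2$, this becomes $\Gamma_{34}^2\Gamma_{43}^2=\dfrac{\alpha^2(a-c)(b-c)^2}{(a-b)^3}$. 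Comparing with (\ref{fpr31}) and cancelling the nonzero factor $(a-c)(b-c)^2$ yields
\[
\alpha^2=\frac{(a-b)^4}{4D}.
\]

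Then I would differentiate this relation in $s$ in two ways. Since $R_{1221}=R_{1331}=R_{1441}$ — from Lemma \ref{4dform} and (\ref{0110g}) — the quantity $\zeta_i'+\zeta_i^2$ is independent of $i$, so $(a-b)'=-(a-b)(a+b)$ and cyclically. As $D$ depends on $a,b,c$ only through the pairwise differences, this gives $D'=-\bigl[(a-b)^2(a+b)+(b-c)^2(b+c)+(c-a)^2(c+a)\bigr]=\mu-2(a^3+b^3+c^3)$ with $\mu:=\sum_{\mathrm{sym}}a^2b$, and hence $\tfrac{(\alpha^2)'}{\alpha^2}=-4(a+b)-\tfrac{\mu-2(a^3+b^3+c^3)}{D}$. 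On the other hand Lemma \ref{4dformb} gives $E_1(\alpha)=\alpha(c-a-b)$, i.e. $(\alpha^2)'=2\alpha^2(c-a-b)$. Equating the two expressions and using $(a+b+c)D=a^3+b^3+c^3-3abc$, everything collapses to $\mu=6abc$. But (\ref{fpr2}) says precisely $|\nabla f|=f'=\dfrac{\mu-6abc}{2D}$, so $\mu=6abc$ forces $|\nabla f|\equiv 0$ on $W$, contradicting $W\subseteq\{\nabla f\neq 0\}$. Since $M_{\mathcal A}\cap\{\nabla f\neq 0\}$ is open and dense, the three eigenfunctions can never be pairwise distinct.

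I do not anticipate a real obstacle here: once organized this way the computation is short, and its mechanism is transparent — the derivatives $a',b',c'$ never appear because $\alpha^2$ and $D$ depend on $a,b,c$ only through $a-b,b-c,c-a$, whose $s$-derivatives are exactly what the identity $R_{1ii1}=-\zeta_i'-\zeta_i^2$ supplies. The only points demanding care are the algebraic bookkeeping in the two identities above and the verification that $a,b,c$ are genuinely pairwise distinct on $W$, so that $D$ and the differences are nonzero.
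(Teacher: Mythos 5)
Your proposal is correct and follows essentially the same route as the paper's proof: both derive $\alpha^2=(a-b)^4/(4P)$ from (\ref{fpr31}) together with Lemmas \ref{abc60b} and \ref{4dformb}, differentiate this in $s$ two ways (once via the relations $\zeta_i'+\zeta_i^2=\zeta_j'+\zeta_j^2$, once via $E_1(\alpha)=\alpha(c-a-b)$), and equate to obtain $\sum_{\mathrm{sym}}a^2b=6abc$, which by (\ref{fpr2}) forces $f'=0$ and yields the contradiction. The only differences are cosmetic (logarithmic differentiation and a direct factorization of $\Gamma_{34}^2\Gamma_{43}^2$ in place of the paper's intermediate identity $(\Gamma_{34}^2)^2=\Gamma_{34}^2\Gamma_{43}^2\,\tfrac{a-b}{a-c}$).
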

\begin{proof}
Suppose that  $\lambda_2, \lambda_3, \lambda_4$ are pairwise distinct. We shall prove then that $g$ should be an Einstein metric, so a contradiction.

 \noindent In this proof again we set $a:= \zeta_2$, $b:= \zeta_3$ and  $c:= \zeta_4$.
From (\ref{fpr31}) and Lemma \ref{abc60b},

$ (\alpha - \gamma + \beta)^2= 4(\Gamma_{34}^2)^2  = 4 \Gamma_{34}^2 \Gamma_{43}^2 \frac{(a-b)}{(a-c)}  =\frac{(a - b)^2 (b - c )^2}{ ( a^2  +b^2+ c^2 - ab  -  bc - ac ) }$.

\noindent For convenience set $P:= a^2  +b^2+ c^2 - ab  -  bc - ac $.
From Lemma \ref{4dformb},
\begin{eqnarray*}
(\alpha - \gamma + \beta)^2 =\alpha^2 \{1 -  \frac{(a - c   )^2}{(a - b   )^2}  +  \frac{(b - c   )^2}{(a - b   )^2}  \}^2  = \frac{4 \alpha^2 (b  - c )^2}{(a - b   )^2}.
 \end{eqnarray*}

So,   $ \alpha^2    = \frac{(a - b)^4 }{ 4P } .$ Since $a,b,c$ are all functions of $s$ only, so is $\alpha$.

Differentiating this in $s$ and using $ b^{'}  - a^{'} =  a^2  -  b^2  $ and  $ c^{'} - a^{'} =  a^2   -  c^2  $,  we get
 \begin{eqnarray*}
 2 \alpha \alpha^{'}    =  &  \frac{(a - b)^3 (a^{'} - b^{'}) }{ P}  -   \frac{(a - b)^4 ( 2a a^{'}  +2b b^{'} + 2c c^{'}  - a b^{'} - b a^{'} -  a c^{'} -c a^{'} - c b^{'}- b c^{'}  )  }{ 4P^2 } \hspace{1.5cm} \\
 =  &  \frac{-(a - b)^3 (a^2 - b^2) }{ P}  -   \frac{(a - b)^4 \{ (a-b) (a^{'} -b^{'})  + (a-c) (a^{'} -c^{'})+ (b-c) (b^{'} -c^{'})    \}  }{ 4P^2 } \hspace{1.5cm} \\
  =  &  \frac{-(a - b)^4 (a+b) }{ P}  +   \frac{(a - b)^4 \{ (a-b) (a^2 -b^2)  + (a-c) (a^2 -c^2)+ (b-c) (b^2 -c^2)    \}  }{ 4P^2 } \hspace{1.5cm} \\
    =  &  -\frac{(a - b)^4  }{ P} [(a+b)  -   \frac{ \{ 2(a^3 +b^3 + c^3 -3abc) + 6abc -a^2b -ab^2 -a^2c -ac^2 -b^2c -bc^2 \}  }{ 4P }] \hspace{0.5cm} \\
 =  &  -\frac{(a - b)^4  }{ P} [(a+b)  - \frac{(a+b+c)}{2} - \frac{ \{ 6abc -a^2b -ab^2 -a^2c -ac^2 -b^2c -bc^2 \}  }{ 4P }] \hspace{1.5cm} \\
  =  &  -\frac{(a - b)^4  }{ P} [ \frac{(a+b-c)}{2} - \frac{ \{ 6abc -a^2b -ab^2 -a^2c -ac^2 -b^2c -bc^2 \}  }{ 4P }] \hspace{3cm}
 \end{eqnarray*}

Meanwhile, from Lemma \ref{4dformb} and  $ \alpha^2    = \frac{(a - b)^4 }{ 4P } $,

\begin{equation*}
 2 \alpha \alpha^{'}= 2 \alpha E_1(\alpha)=-2 \alpha^2 ( a +  b - c    )= -\frac{(a - b)^4 }{ 2P }( a +  b - c    ).
 \end{equation*}

\noindent Equating these two expressions for $ 2 \alpha \alpha^{'}$, we get;

$6abc = a^2b + b^2a + a^2c + c^2a + b^2c + c^2b   $.
From (\ref{fpr2}), $f^{'} =0$. So, $g$ is an Einstein metric.
\end{proof}

\section{4-dimensional soliton with $\lambda_2 \neq \lambda_3 =\lambda_4$ }

In this section we begin to study the case when exactly two of the three eigenvalues $\lambda_2, \lambda_3, \lambda_4$ are equal.  We may well assume that $ \lambda_2  \neq \lambda_3=  \lambda_4 $.

\begin{lemma} \label{claim112na}
Let $(M, g, f)$ be a four dimensional gradient Ricci soliton with harmonic Weyl curvature.
Suppose that  $ \lambda_2  \neq \lambda_3=  \lambda_4$ for an adapted frame fields $E_j$, $j=1,2,3,4$,
on an open subset of $M_{\mathcal{A}} \cap \{ \nabla f \neq 0  \}$.
Then the following hold on the open subset;

 \medskip
$\nabla_{E_1}  E_1=0  $.

$\nabla_{E_i}  E_1= \zeta_i(s) E_i$ for $i=2,3,4$, with $ \zeta_i(s)=\frac{ 1}{|\nabla f |} (\lambda -   \lambda_i ) $.

$\nabla_{E_2}  E_2 = -\zeta_2(s) E_1  $.
$\nabla_{E_3}  E_3 = -\zeta_3  E_1 -  \beta_3  E_4  $, $\nabla_{E_4}  E_4 = -\zeta_4  E_1 + \beta_4 E_3  $,

$   \ \     $ for some functions $\beta_3$ and $\beta_4$.

$\nabla_{E_1}  E_2=  0$, $\nabla_{E_1}  E_3= \rho E_4$ and $\nabla_{E_1}  E_4= -\rho E_3$  for some function $\rho$.

$\nabla_{E_2}  E_3=   q E_4$ and $\nabla_{E_2}  E_4=  - q E_3$  for some function $q$.

 $\nabla_{E_3}  E_2=0$ and $\nabla_{E_4}  E_2=0$.

$\nabla_{E_3}  E_4 = \beta_3 E_3  $ and $\nabla_{E_4}  E_3 = - \beta_4 E_4  $.

$[E_1, E_2]= -\zeta_2 E_2  $ and $[E_3, E_4]= \beta_3 E_3 + \beta_4 E_4.$

\medskip
In particular, the distribution spanned by $E_1$ and $E_2$ is integrable. So is that  spanned by $E_3$ and $E_4$.

\end{lemma}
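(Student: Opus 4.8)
The plan is to extract all the covariant derivatives $\nabla_{E_i}E_j$ by systematically pairing the Codazzi-tensor identities of Lemma \ref{abc60} with the general structure equations already available. First I would record the inputs: from Lemma \ref{threesolb01} (equivalently Lemma \ref{raas} and \eqref{lambda06}) we already have $\nabla_{E_1}E_1=0$ and $\nabla_{E_i}E_1=\zeta_i(s)E_i$ with $\zeta_i=\frac{1}{|\nabla f|}(\lambda-\lambda_i)$; since $\lambda_3=\lambda_4$ we get $\zeta_3=\zeta_4$. The $\lambda_2$-eigenspace $\mathrm{span}\{E_1\}$-complement structure: $E_1$ lies in $D_1$ (the distribution containing $\nabla f$), $E_2$ spans the $\lambda_2$-eigenspace, and $E_3,E_4$ span the $\lambda_3$-eigenspace $D_3$. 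Derdzi\'nski's Lemma \ref{derdlem}(ii),(iii) then tells us each eigenspace distribution is integrable with totally umbilic leaves, and the three distributions are mutually orthogonal; this already forces many connection coefficients between distinct eigenspaces to be symmetric or to vanish.

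The key computational steps, in order, are as follows. (1) Compute $\langle\nabla_{E_i}E_i,E_j\rangle$ for $i\neq j$ using Lemma \ref{derdlem}(i): when $E_i,E_j$ lie in eigenspaces of distinct eigenvalues this gives a term proportional to $E_j(\lambda_i)$, which vanishes by Lemma \ref{raas} since $E_j(\lambda_i)=0$ for $j>1$; when $E_i,E_j$ lie in the same eigenspace the identity is vacuous, so one instead uses the umbilicity from Lemma \ref{derdlem}(ii). This pins down $\nabla_{E_2}E_2=-\zeta_2E_1$ and the shape of $\nabla_{E_3}E_3,\nabla_{E_4}E_4$ (the $\beta_3,\beta_4$ come from the as-yet-undetermined mean-curvature components within $D_3$). (2) Compute $\nabla_{E_1}E_i$ for $i>1$ from Lemma \ref{abc60}(ii) applied with the triple $(1,i,j)$: since $\langle\nabla_{E_i}E_1,E_j\rangle=\zeta_i\langle E_i,E_j\rangle=0$ for $j\neq i$, we get $(\lambda_i-\lambda_j)\langle\nabla_{E_1}E_i,E_j\rangle=0$, hence $\langle\nabla_{E_1}E_i,E_j\rangle=0$ whenever $\lambda_i\neq\lambda_j$; this kills $\langle\nabla_{E_1}E_2,E_j\rangle$ and $\langle\nabla_{E_1}E_3,E_2\rangle$ etc., leaving only rotation within $D_3$, i.e. $\nabla_{E_1}E_3=\rho E_4$, $\nabla_{E_1}E_4=-\rho E_3$. (3) Compute $\nabla_{E_2}E_j$ and $\nabla_{E_3}E_2$, $\nabla_{E_4}E_2$ by the same device: the identity in Lemma \ref{abc60}(ii) with triple $(2,3,4)$-type indices, combined with skew-symmetry $\Gamma^k_{ij}=-\Gamma^j_{ik}$ and orthogonality to $E_1$ (from $\nabla_{E_2}E_1\perp E_j$), forces $\nabla_{E_3}E_2=\nabla_{E_4}E_2=0$ and $\nabla_{E_2}E_3=qE_4$, $\nabla_{E_2}E_4=-qE_3$. (4) For $\nabla_{E_3}E_4$ and $\nabla_{E_4}E_3$: the components along $E_1$ vanish since $\langle\nabla_{E_3}E_4,E_1\rangle=-\langle E_4,\nabla_{E_3}E_1\rangle=-\zeta_3\langle E_4,E_3\rangle=0$, the components along $E_2$ vanish from step (3) by metric compatibility, so $\nabla_{E_3}E_4=\beta_3 E_3$, $\nabla_{E_4}E_3=-\beta_4 E_4$ where consistency of $\langle\nabla_{E_3}E_4,E_3\rangle$ with $\langle\nabla_{E_3}E_3,E_4\rangle$ identifies the coefficient with the $\beta_3$ introduced in step (1) (similarly $\beta_4$). (5) Finally assemble the brackets: $[E_1,E_2]=\nabla_{E_1}E_2-\nabla_{E_2}E_1=0-\zeta_2E_2=-\zeta_2E_2$ and $[E_3,E_4]=\nabla_{E_3}E_4-\nabla_{E_4}E_3=\beta_3E_3+\beta_4E_4$; integrability of $\mathrm{span}\{E_1,E_2\}$ and of $\mathrm{span}\{E_3,E_4\}$ is then immediate from the Frobenius criterion since these brackets stay inside the respective spans.

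I expect the main obstacle to be the bookkeeping in steps (2)--(4): each vanishing statement relies on picking exactly the right index triple in Lemma \ref{abc60}(ii), checking that the relevant eigenvalue gap $\lambda_i-\lambda_j$ is nonzero (which is where the hypothesis $\lambda_2\neq\lambda_3=\lambda_4$ and $\lambda_1\neq\lambda_i$ on $\{\nabla f\neq0\}\cap M_{\mathcal A}$ enters), and then invoking $E_j(\lambda_i)=0$ from Lemma \ref{raas} at the appropriate moment. The one genuinely non-formal point is reconciling the "$\beta_3$" appearing in $\nabla_{E_3}E_3$ with the "$\beta_3$" in $\nabla_{E_3}E_4$: this is a metric-compatibility identity $\langle\nabla_{E_3}E_3,E_4\rangle=-\langle E_3,\nabla_{E_3}E_4\rangle$ together with the observation that $\nabla_{E_3}E_4$ has no $E_1,E_2$ component, so the single free function is consistently named; the same remark handles $\beta_4$ and $\rho,q$ are just the residual rotation parameters within $D_3$ along $E_1$ and $E_2$ respectively. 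Once all coefficients are named the integrability conclusion is a one-line application of Frobenius.
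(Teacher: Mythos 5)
Your proposal is correct and follows essentially the same route as the paper: Lemma \ref{threesolb01} for the $\nabla E_1$ formulas, Derdzi\'{n}ski's Lemma \ref{derdlem}(i) together with $E_j(\lambda_i)=0$ from Lemma \ref{raas} for the diagonal terms, Lemma \ref{abc60}(ii) with the appropriate index triples (where $\lambda_3=\lambda_4$ kills one side and $\lambda_2\neq\lambda_4$ forces the other bracket to vanish) for the cross terms, and metric compatibility to name $\rho$, $q$, $\beta_3$, $\beta_4$ consistently before assembling the brackets and invoking Frobenius.
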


\begin{proof}
The formula for  $\nabla_{E_i} E_1$, $i \geq 1$,  comes from (\ref{lambda06a}).

Then from Lemma \ref{raas} and Lemma \ref{derdlem} (i); $ (\lambda_2 - \lambda_i   )\langle\nabla_{E_2}  E_2, E_i\rangle =  E_i(\lambda_2) =0  $ for $i=3,4$ and
$\langle\nabla_{E_2}  E_2, E_1\rangle = - \langle  E_2,  \nabla_{E_2}  E_1 \rangle= -\zeta_2(s)$.
So, $\nabla_{E_2}  E_2 = -\zeta_2(s) E_1  $.
By similar argument,
$\nabla_{E_3}  E_3 = -\zeta_3  E_1 -  \beta_3  E_4  $, $\nabla_{E_4}  E_4 = -\zeta_4  E_1 + \beta_4 E_3  $, for some functions $\beta_3$ and $\beta_4$.

From Lemma \ref{abc60} {\rm (ii)},
$(\lambda_2 - \lambda_i ) \langle \nabla_{E_1} E_2, E_i\rangle=(\lambda_1 - \lambda_i ) \langle \nabla_{E_2} E_1, E_i\rangle=(\lambda_1 - \lambda_i) \langle\zeta_2 E_2, E_i\rangle=0 ,$ for $i=3,4$.
 So, $\langle\nabla_{E_1}  E_2, E_i\rangle =0$, for $i=3,4$.  As $\langle\nabla_{E_1}  E_2, E_1\rangle = -\langle  E_2, \nabla_{E_1} E_1\rangle=0$, we have   $\nabla_{E_1}  E_2=  0$.

As $\langle\nabla_{E_1}  E_3,  E_2\rangle = -\langle E_3,  \nabla_{E_1}  E_2\rangle=  0$, one can readily get $\nabla_{E_1}  E_3= \rho E_4$ for some function $\rho$  and $\nabla_{E_1}  E_4= -\rho E_3$. And $\nabla_{E_2}  E_3=   q E_4$ for some function $q$  and $\nabla_{E_2}  E_4=  - q E_3$.

From Lemma \ref{abc60} {\rm (ii)},
$(\lambda_2 - \lambda_4 ) \langle \nabla_{E_3} E_2, E_4\rangle=(\lambda_3 - \lambda_4 ) \langle \nabla_{E_2} E_3, E_4\rangle=0 .$
So,  $\langle\nabla_{E_3}  E_2, E_4\rangle  =0$.
As we have $\langle\nabla_{E_3}  E_2, E_a\rangle  =0$ for $i=1, 3$ from above,
we get
 $\nabla_{E_3}  E_2=0$.  Similarly, $\nabla_{E_4}  E_2=0$.

One can easily compute $\nabla_{E_3}  E_4 = \beta_3 E_3  $ and $\nabla_{E_4}  E_3 = - \beta_4 E_4  $.
From above we get $[E_1, E_2]= -\zeta_2 E_2$ and  $[E_3, E_4]= \beta_3 E_3 + \beta_4 E_4$.
\end{proof}

\begin{lemma} \label{claim112}
  Let $D^1$ and $D^2$ be both two dimensional smooth integrable distributions on a domain $\Omega$ of a four dimensional manifold that span the tangent space $T_p \Omega$ for each $p \in \Omega$. Let $p_0$ be a point in $\Omega$. Then there is a coordinate neighborhood $ (x_1,x_2,x_3,x_4)$ near $p_0$  so that  $D^1$ is tangent to the 2-dimensional level sets $ \{ (x_1, x_2, x_3,x_4) |  \ x_3, x_4 \ {\rm constants} \} $ and $D^2$ is tangent to the level sets $ \{ (x_1, x_2, x_3,x_4) |  \ x_1, x_2 \ {\rm constants} \} $.
\end{lemma}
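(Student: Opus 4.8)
The plan is to apply the Frobenius theorem twice and then merge the two resulting foliation charts into a single coordinate system. First I would use the Frobenius integrability theorem for $D^2$: since $D^2$ is a $2$-dimensional involutive distribution near $p_0$, there is a coordinate chart $(y_1,y_2,y_3,y_4)$ centered at $p_0$ in which $D^2 = \mathrm{span}\{\partial/\partial y_3,\partial/\partial y_4\}$, so the integral leaves of $D^2$ are the slices $\{y_1,y_2\ \text{constant}\}$. In this chart the two functions $y_1,y_2$ are first integrals of $D^2$, i.e. their differentials annihilate $D^2$; equivalently $D^2 = \ker dy_1 \cap \ker dy_2$.

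Next I would produce, by the same theorem applied to $D^1$, two independent first integrals $x_3,x_4$ of $D^1$ near $p_0$, so that $D^1 = \ker dx_3 \cap \ker dx_4$ and the leaves of $D^1$ are the level sets $\{x_3,x_4\ \text{constant}\}$. The candidate coordinate system is then $(y_1,y_2,x_3,x_4)$ (relabelling $y_1=x_1$, $y_2=x_2$ for the statement). One must check this is a genuine coordinate system near $p_0$, i.e. that $dy_1,dy_2,dx_3,dx_4$ are linearly independent at $p_0$. This is exactly where the transversality hypothesis $D^1_p \oplus D^2_p = T_p\Omega$ enters: restricting $dx_3,dx_4$ to $D^2_{p_0}$ gives an isomorphism $D^2_{p_0}\to\mathbb{R}^2$ (since $D^1 = \ker dx_3\cap\ker dx_4$ has trivial intersection with the $2$-dimensional $D^2$, the pair $(dx_3,dx_4)$ is injective on $D^2$), and symmetrically $(dy_1,dy_2)$ is injective on $D^1$; combining these with the direct sum decomposition shows the four covectors are independent. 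Hence $(x_1,x_2,x_3,x_4):=(y_1,y_2,x_3,x_4)$ is a chart near $p_0$.

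Finally I would verify the two tangency claims in this chart. Since $dx_3,dx_4$ both vanish on $D^1$, every vector of $D^1$ has zero $\partial_{x_3}$- and $\partial_{x_4}$-components, so $D^1\subseteq\mathrm{span}\{\partial_{x_1},\partial_{x_2}\}$; both are $2$-dimensional, hence equal, i.e. $D^1$ is tangent to the slices $\{x_3,x_4\ \text{constant}\}$. Symmetrically $dx_1=dy_1$ and $dx_2=dy_2$ vanish on $D^2$, so $D^2 = \mathrm{span}\{\partial_{x_3},\partial_{x_4}\}$, tangent to $\{x_1,x_2\ \text{constant}\}$.

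The only genuine obstacle is the linear-independence check for the four differentials, which is where transversality is used; everything else is a direct invocation of Frobenius and a dimension count. I expect the argument to be short, the delicate point being simply to record carefully why the combined set of first integrals forms a chart rather than a degenerate map.
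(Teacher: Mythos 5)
Your argument is correct, but it takes a genuinely different route from the paper. The paper applies the Frobenius theorem only once (to $D^2$), then builds the missing two coordinates by hand: using transversality it produces vector fields $X_1,X_2\in D^1$ of the normalized form $(1,0,*,*)$ and $(0,1,*,*)$ in the $D^2$-adapted chart, defines $\Phi(x_1,x_2,x_3,x_4)=\phi_{x_1}\psi_{x_2}(0,0,x_3,x_4)$ via their flows, checks $\Phi$ is a local diffeomorphism, and invokes the integrability of $D^1$ to show that the flows of $X_1,X_2$ stay inside the integral surfaces of $D^1$, so that $\Phi_*(\partial_{x_1}),\Phi_*(\partial_{x_2})$ span $D^1$. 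You instead apply Frobenius twice, extract two first integrals for each distribution ($D^1=\ker dx_3\cap\ker dx_4$, $D^2=\ker dy_1\cap\ker dy_2$), and reduce the whole lemma to the linear-algebra observation that transversality forces $dy_1,dy_2,dx_3,dx_4$ to be independent at $p_0$, after which both tangency claims are dimension counts on kernels. Your version is shorter and avoids the flow construction and the leaf-preservation argument entirely; the one point you rightly flag as delicate --- injectivity of $(dx_3,dx_4)$ on $D^2_{p_0}$ --- is handled correctly, since its kernel is $D^2_{p_0}\cap D^1_{p_0}=0$. The paper's construction is more explicit (it exhibits the chart as a composition of flows), but for the purposes of Lemma \ref{claim112b}, which only uses the stated conclusion, nothing is lost by your more economical argument.
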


 \begin{proof}
   By Frobenius theorem, there is a coordinate neighborhood $ {\bf x}:=(x,y,z,w) $ near  $p_0$  so that $D^2$ is tangent to the sets $ \{ (x,y,z,w) |  \ x,y \ {\rm constants} \} $. We may assume that $(x(p_0), y(p_0), w(p_0), z(p_0) )=  (0, 0,0 ,0)$.

Then there are two vector fields $v_1= (a_1, b_1, c_1, d_1):= a_1\frac{\partial}{\partial x} + b_1\frac{\partial}{\partial y} + c_1\frac{\partial}{\partial z}+ d_1\frac{\partial}{\partial w} $ and $v_2=(a_2, b_2, c_2, d_2)$ for points $p$ near $p_0$ in $D^1$,  with $(a_1(p), b_1(p))$ and $(a_2(p), b_2(p))$ being linearly independent as two dimensional vectors; if not, $D^1_{p}$ and $D^2_{p}$ won't span $T_{p} \Omega$.

By considering $X_1:= \alpha_1 v_1  +  \beta_1 v_2$ and  $X_2:= \alpha_2 v_1  +  \beta_2 v_2$ for smooth functions $\alpha_i, \beta_i$, we have smooth vector  fields  $X_1, X_2 \in D^1$,  of the form $X_1(p)= (1,0, a_1(p), a_2(p))  $ and $X_2 = (0,1, b_1(p), b_2(p))$ for $p$ near  $p_0$ with smooth functions $a_i, b_i$, $i=1,2$.

Consider the one-parameter subgroup $\phi_t$  of  $X_1$ and $\psi_s$ of $X_2$;
 $\frac{d }{dt} \phi_t(p) =  (1,0, a_1(\phi_t(p)), a_2(\phi_t(p))  )_{\phi_t(p)} $ and   $\frac{d }{ds} \psi_s(p)  = (0,1, b_1(\psi_s(p)), b_2(\psi_s(p)))$.

 Define a map $\Phi$ on a neighborhood of the origin in $\mathbb{R}^4= \{ (x_1, x_2, x_3, x_4) \}$ into $\mathbb{R}^4=  \{ (x,y,z,w) \}$ by
  $\Phi(x_1, x_2, x_3, x_4)  :=  \phi_{x_1} \psi_{x_2}  (0, 0, x_3, x_4)$. This $\Phi$  gives a local coord. system near  $p_0$.
From $\frac{d }{ds} \psi_s(p)  = (0,1, b_1(\psi_s(p)), b_2(\psi_s(p))) $, we get $\psi_{x_2}  (0, 0, x_3, x_4)=(0, x_2, *, *)$ and similarly
 $\phi_{x_1} \psi_{x_2}  (0, 0, x_3, x_4) = \phi_{x_1}(0, x_2, *, *)  =(x_1, x_2, *, *)$.

So, $\Phi (x_1, x_2, x_3, x_4) = (x_1, x_2, *, *)$.
  Then we get $\Phi_* ( \frac{\partial}{ \partial x_3}  ), \Phi_* ( \frac{\partial}{ \partial x_4}  )   \in  {\rm span}(\frac{\partial}{ \partial z} , \frac{\partial}{ \partial w}) =D^2 $. So,  $D^2$ is spanned by $\Phi_* ( \frac{\partial}{ \partial x_3}  )$ and  $\Phi_* ( \frac{\partial}{ \partial x_4}  ) $.

\medskip
As $D^1$ is integrable, in a neighborhood of each point  $q_0:= (0, 0, c, d)$ near the origin, there is a unique surface $S_{q_0}$ containing $q_0$ which is tangent to the distribution $D^1$ at each point of $S_{q_0}$. As $X_1$ and $X_2$ are vector fields on  $S_{q_0}$, at each point $q \in S_{q_0}$ we have $\{ \psi_{x_2} (q)  \  | \   x_2 \in (-\epsilon, \epsilon) \} \subset S_{q_0}$ and $\{ \phi_{x_1} (q) \  | \  x_1 \in (-\epsilon, \epsilon) \} \subset S_{q_0}$ for small $\epsilon$.
Therefore the set $\{  \phi_{x_1} \psi_{x_2}  (0, 0, c, d) \  |  \   x_1, x_2 \in (-\varepsilon, \varepsilon)    \}$, for small $\varepsilon$,
 coincides with  $S_{q_0}$ near $q_0$.
So, we get
$\Phi_* ( \frac{\partial}{ \partial x_1}  ),   \Phi_* ( \frac{\partial}{ \partial x_2}  )  \in  D^1 ,$
and $D^1$ is spanned by $\Phi_* ( \frac{\partial}{ \partial x_1}  ), \Phi_* ( \frac{\partial}{ \partial x_2}  ) $.
 Now we have obtained a new coordinates system $\Phi^{-1}  \circ {\bf x} $ with the desired property. This proves the lemma.
\end{proof}

Using Lemma \ref{claim112na} and
 Lemma \ref{claim112}, we can express the metric $g$ in the following lemma.

\begin{lemma} \label{claim112b}
Let $(M, g, f)$ be a four dimensional gradient Ricci soliton with harmonic Weyl curvature.
Suppose that  $ \lambda_2  \neq \lambda_3=  \lambda_4$ for an adapted frame fields $E_j$, $j=1,2,3,4$,  on an open subset $U$ of $M_{\mathcal{A}} \cap \{ \nabla f \neq 0  \}$.

\smallskip
Then for each point $p_0$ in $U$, there exists a neighborhood $V$ of $p_0$ in $U$ with coordinates $(s,t, x_3, x_4)$  such that $\nabla s= \frac{\nabla f }{ |\nabla f |}$ and $g$ can be written on $V$ as

\begin{equation} \label{mtr1a}
g= ds^2 +  p(s)^2  dt^2 +    h(s)^2 \tilde{g},
\end{equation}
 where  $p:=p(s)$ and $h:=h(s)$ are smooth functions and
 $\tilde{g}$ is (a pull-back of) a Riemannian metric on a $2$-dimensional domain with $x_3, x_4$ coordinates.

 We  get   $E_1 =\frac{\partial }{\partial s} $ and  $E_2 =\frac{1}{p} \frac{\partial }{\partial t} $.

\end{lemma}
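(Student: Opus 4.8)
The plan is to convert the pointwise connection data of Lemma \ref{claim112na} into a coordinate form for $g$. Write $D^1$ for the distribution spanned by $E_1,E_2$ and $D^2$ for the one spanned by $E_3,E_4$; these are $g$-orthogonal because $\{E_i\}$ is orthonormal, and both are integrable by Lemma \ref{claim112na}. Reading off the formulas there, $\nabla_XY\in D^1$ whenever $X,Y\in\{E_1,E_2\}$, so $D^1$ is autoparallel and its leaves totally geodesic; and since $\lambda_3=\lambda_4$ forces $\zeta_3=\zeta_4$, the $D^2$-leaves are totally umbilic with mean curvature vector $-\zeta_3(s)\,E_1=-\nabla\!\big(\int\zeta_3(s)\,ds\big)$ (using $\zeta_3=\zeta_3(s)$ from Lemma \ref{raas} and $E_1=\nabla s$). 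First I would apply Lemma \ref{claim112} to get coordinates near $p_0$ in which $D^1$ is tangent to $\{x_3,x_4=\mathrm{const}\}$ and $D^2$ to $\{x_1,x_2=\mathrm{const}\}$. Since $E_1=\nabla s\in D^1$ we have $ds(E_3)=ds(E_4)=0$, so $s$ is constant on the $D^2$-leaves and hence $s=s(x_1,x_2)$; as $ds\neq0$ I may assume $\partial s/\partial x_1\neq0$ and use $(s,x_2,x_3,x_4)$ as coordinates, still with $D^1=\langle\partial_s,\partial_{x_2}\rangle$ and $D^2=\langle\partial_{x_3},\partial_{x_4}\rangle$.

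The second step is to normalize the $D^1$-block. Because $D^1$ is autoparallel and $D^1\perp D^2$, differentiating $g(\partial_{x_A},\partial_{x_B})$, $A,B\in\{s,x_2\}$, in the $x_3,x_4$ directions shows the $D^1$-block metric $\hat g$ depends only on $(s,x_2)$; in particular every $D^1$-leaf is isometric to the fixed $2$-dimensional surface $\hat g=g_{AB}(s,x_2)\,dx^A dx^B$, on which $s$ has unit gradient. Fermi coordinates for $s$ on that surface replace $x_2$ by a coordinate $t$ with $\hat g=ds^2+p(s,t)^2dt^2$; pulling $t$ back produces coordinates $(s,t,x_3,x_4)$ on a neighbourhood $V$ of $p_0$ with $g=ds^2+p(s,t)^2dt^2+\gamma_{ij}(s,t,x_3,x_4)\,dx^i dx^j$ (no cross terms, since $D^1\perp D^2$) and, from the construction together with orthonormality, $E_1=\partial_s$ and $E_2=\frac{1}{p}\partial_t$. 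Now $\nabla_{E_2}E_1=\zeta_2(s)E_2$ from Lemma \ref{claim112na} reduces, after a Christoffel computation, to $\partial_s\ln p=\zeta_2(s)$, so $p(s,t)=C(t)\,e^{\int\zeta_2(s)\,ds}$; the substitution $t\mapsto\int C(t)\,dt$, which changes nothing else and preserves $E_1=\partial_s$ and $E_2=\frac{1}{p}\partial_t$, makes $p=p(s)$.

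The third step handles the $D^2$-block in two stages. First, from $\nabla_{E_3}E_1=\zeta_3(s)E_3$ and $\nabla_{E_4}E_1=\zeta_4(s)E_4=\zeta_3(s)E_4$, together with $E_3,E_4$ being a $\gamma$-orthonormal basis of $D^2$, a Christoffel computation yields $\frac{1}{2}\partial_s\gamma_{ij}=\zeta_3(s)\gamma_{ij}$, whence $\gamma_{ij}(s,t,x_3,x_4)=h(s)^2\mu_{ij}(t,x_3,x_4)$ with $h(s):=e^{\int\zeta_3(s)\,ds}>0$. Second, to kill the $t$-dependence of $\mu_{ij}$, I would restrict the connection formulas of Lemma \ref{claim112na} to a level hypersurface $\Sigma_c=\{s=c\}$ of $f$: there the line field $\langle E_2\rangle$ and the plane field $\langle E_3,E_4\rangle$ are each autoparallel, so $\Sigma_c$ is locally a Riemannian product of an interval (tangent to $E_2$, hence to $\partial_t$) with a surface (tangent to $D^2$, hence to $\partial_{x_3},\partial_{x_4}$); consequently the induced metric $p(c)^2dt^2+\gamma_{ij}(c,t,x_3,x_4)\,dx^i dx^j$ splits, which forces $\gamma_{ij}(c,\cdot\,)$ to depend only on $(x_3,x_4)$. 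Since $c$ is arbitrary, $\mu_{ij}=\tilde g_{ij}(x_3,x_4)$ is a Riemannian metric on the $(x_3,x_4)$-domain, and $g=ds^2+p(s)^2dt^2+h(s)^2\tilde g$ with $E_1=\partial_s$, $E_2=\frac{1}{p}\partial_t$, as claimed.

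I expect the main difficulty to be organizational rather than conceptual: one must choose the Fermi coordinate $t$ and the product coordinates on the various $\Sigma_c$ so that they assemble into one smooth coordinate system on a whole neighbourhood of $p_0$ (not merely leaf by leaf), and then check that the successive reparametrizations of $t$ and the conformal factor absorbed into $\tilde g$ do not disturb the earlier normalizations $E_1=\partial_s$, $E_2=\frac{1}{p}\partial_t$ and the block form of $g$. The analytic facts that make the ODEs above integrable — that $\zeta_2,\zeta_3,\zeta_4$ depend on $s$ alone — are supplied by Lemma \ref{raas}, and the Christoffel-symbol computations, though routine, must be carried out with the block metric regarded as a genuine function of all four coordinates until those ODEs have been solved.
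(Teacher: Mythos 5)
Your proposal is correct and follows essentially the same route as the paper: the same orthogonal splitting into the distributions spanned by $E_1,E_2$ and by $E_3,E_4$, the coordinates of Lemma \ref{claim112}, and the same ODEs $\partial_s \ln p=\zeta_2(s)$ and $\tfrac{1}{2}\partial_s g_{ij}=\zeta_3(s)\,g_{ij}$ integrated using the $s$-dependence from Lemma \ref{raas}. The only mechanical differences are that the paper produces the coordinate $t$ by showing the $1$-form $\omega_2/p$ (with $\omega_2=g(E_2,\cdot)$ and $p=e^{\int\zeta_2}$) is closed, rather than via Fermi coordinates on the totally geodesic $D^1$-leaves followed by a rescaling of $t$, and it removes the $t$-dependence of the $D^2$-block by a direct Christoffel computation from $\langle\nabla_{\partial_i}\partial_j,\partial_t\rangle=0$ rather than by splitting the level hypersurface $\Sigma_c$ as a local Riemannian product.
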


 \begin{proof} Let $D^1$ be the 2-dimensional distribution spanned by $E_1 = \nabla s$ and $E_2$. Also let $D^2$ be the one spanned by $E_3$ and $E_4$.   Then $D^1$ and $D^2$ are both integrable by Lemma \ref{claim112na}.
We may consider the coordinates  $(x_1, x_2, x_3, x_4)$ from Lemma \ref{claim112}, so that $D^1$ is tangent to the 2-dimensional level sets

 $ \{ (x_1, x_2, x_3,x_4) |  \ x_3, x_4 \ {\rm constants} \} $ and $D^2$ is tangent to the level sets $ \{ (x_1, x_2, x_3,x_4) |  \ x_1, x_2 \ {\rm constants} \} $.
 As $D^1$ and $D^2$ are $g$-orthogonal, we can get the metric description for $g$ as follows;

$g= g_{11}dx_1^2 +   g_{12} dx_1 \odot dx_2  +  g_{22}dx_2^2 +    g_{33}dx_3^2 +   g_{34} dx_3 \odot dx_4 + g_{44}dx_4^2 $, where $\odot$ is the symmetric tensor product and $g_{ij}$ are functions  of $(x_1, x_2, x_3, x_4)$.

\medskip
 As $E_1= \nabla s   \in D^1$, we have $ds= g(E_1, \cdot)$.
  We define a $1$-form $\omega_2 ( \cdot ):= g ( E_2,  \cdot)$.
One can readily see that
  $ds^2 +   \omega_2^2  = g_{11}dx_1^2 +   g_{12} dx_1 \odot dx_2  +  g_{22}dx_2^2$.  In fact, one may feed $(E_i, E_j)$ to both sides and use the fact that each of $E_1$ and $E_2$ is of the form $a\partial_1 + b\partial_2$ as
 they are tangent to the sets $ \{ (x_1, x_2, x_3,x_4) |  \ x_3, x_4 \ {\rm constants} \} $,
   while each of $E_3$ and $E_4$ is of the form $c\partial_3 + d\partial_4$ for a similar reason; here we have set  $ \partial_{i} :=  \frac{\partial}{\partial x_i}$.

Recalling $[E_1, E_2] =- \zeta_2 (s)E_2$, we define a function $p(s) = e^{\int_{s_0}^s \zeta_2(u) du}$ for a constant $s_0$ so that  $\zeta_2=   \frac{p^{'}}{p}$.
Then, the 2-form $d (\frac{\omega_2}{p}) $ satisfies
$d (\frac{\omega_2}{p})(E_1, E_2) = -\frac{dp \wedge \omega_2 }{p^2}(E_1, E_2)  + \frac{1}{p} d \omega_2(E_1, E_2) =- \frac{p^{'}}{p^2}+ \frac{p^{'}}{p^2} = 0$.
And for $i \in \{3,4 \}$ and  for any $j  \in \{1,2,3,4 \}$,
 $d (\frac{\omega_2}{p})(E_i, E_j) = -\frac{dp \wedge \omega_2 }{p^2}(E_i,  E_j)  + \frac{1}{p} d \omega_2(E_i,  E_j) =  \frac{1}{p} d\omega_2(E_i,  E_j)= -\frac{1}{p} \omega_2([E_i,  E_j]) =  0$ by Lemma \ref{claim112na}.

 So, $d (\frac{\omega_2}{p})=0$ and   $\frac{\omega_2}{p} = d t$ for some function $t$ modulo a constant in a neighborhood of $p_0$.
 The metric $g$ can be now written as
\begin{equation} \label{ggg}
g= ds^2 +   p(s)^2 dt^2 +    g_{33}dx_3^2 +   g_{34} dx_3 \odot dx_4 + g_{44}dx_4^2,
\end{equation}
where $g_{ij}$ are functions  of $(x_1, x_2, x_3, x_4)$.
In the coordinates system $(s,t, x_3, x_4)$,  one easily gets $E_1 =\frac{\partial }{\partial s} $ and $E_2 =\frac{1}{p} \frac{\partial }{\partial t} $.

\medskip
 Now we use new coordinates $(s,t, x_3, x_4)$  in computations below, so that $\partial_{1}=\frac{\partial }{\partial s}$ and  $\partial_{2} =\frac{\partial }{\partial t}$, etc..
 From Lemma \ref{claim112na},
we have $ <\nabla_{E_i}  E_{j} ,  E_2>=0  $ for $i, j \in \{ 3,4 \}$.
As $ \partial_{3}$ and $\partial_{4}$ are both of the form $\gamma E_3 + \delta E_4$,
 we have that $< \nabla_{\partial_{i}}  \partial_{j} ,  \partial_2>= 0 $ for $i,j \in \{ 3,4  \}$.

We set  $ g_{ij}= g( \partial_i , \partial_{j}  )$. Due to (\ref{ggg}), for $i,j \in \{ 3,4  \}$;
 \begin{eqnarray} \label{what03}
0 & =   < \nabla_{\partial_{i}}  \partial_{j} ,  \partial_{2}> =   \sum_{k=1}^4 <\Gamma^{k}_{ij}  \partial_k ,  \partial_{2}  > \nonumber \hspace{2cm} \\
 &=   \sum_{k, l=1}^4 <  \frac{1}{2} g^{kl}( \partial_i g_{l{j}}+ \partial_j g_{l{i}} - \partial_l g_{ij} )\partial_k ,  \partial_{2}  > \hspace{0.7cm}   \nonumber   \\ & =- \sum_{k, l=1}^4  \frac{1}{2} g^{kl}  \partial_l g_{ij} <\partial_k ,  \partial_{2}  >   =  -\frac{1}{2}  \partial_{2} g_{ij}. \hspace{1.3cm}
\end{eqnarray}
We have shown;
\begin{eqnarray} \label{bbaa}
\frac{\partial g_{33}}{\partial t} =  \frac{\partial g_{34}}{\partial t} =\frac{\partial g_{44}}{\partial t}=0.
\end{eqnarray}
We consider the second fundamental form of a leaf for $D^2$ with respect to $E_1$;
$H^{E_1}  ( u , u ) =  -  < \nabla_{u} u ,  E_1>  $. As $D^2$ is totally umbilic by Lemma  \ref{derdlem} {\rm (ii)}, $H^{E_1} ( u , u ) =    \zeta \cdot g( u , u) $ for some function  $\zeta$ and any $u$ tangent to $D^2$.
Then, $H^{E_1} (E_3, E_3 ) = -  < \nabla_{E_3} E_3 ,  E_1> =    \zeta_3   $
So, $\zeta= \zeta_3 $, which is a function of $s$ only by Lemma \ref{raas}.

For $i, j \in \{ 3,4 \}$,  we compute similarly as in (\ref{what03}),
\begin{eqnarray*}
\zeta_3  g_{ij}& =H^{E_1} ( \partial_i , \partial_{j} )  =   -  < \nabla_{\partial_i}  \partial_{j} ,   \frac{\partial }{\partial s}> =  -  <\sum_k \Gamma^{k}_{i{j}}  \partial_k ,   \frac{\partial }{\partial s}  > \\
& =  - \sum_k <  \frac{1}{2} g^{kl}( \partial_i g_{lj} +\partial_{j} g_{li} - \partial_l g_{ij} )\partial_k ,  \frac{\partial }{\partial s} >       = \frac{1}{2} \frac{\partial }{\partial s} g_{i{j}}.
\end{eqnarray*}
 So,  $\frac{1}{2} \frac{\partial }{\partial s} g_{i{j}} =   \zeta_3  g_{ij}$. Integrating it, for $i, j \in \{ 3,4 \}$, we get $ g_{ij} = e^{C_{ij}} h(s)^2$. Here the function $h(s)>0$ is independent of $i,j$ and each function $C_{ij}$ depends only on $x_3, x_4$ by (\ref{bbaa}).

 Now  $g$ can be  written as $g= ds^2 +   p(s)^2 dt^2 +    h(s)^2 \tilde{g} $, where $\tilde{g}$ can be viewed as a Rimannian metric in a domain of $(x_3, x_4)$-plane.
 \end{proof}

\section{Analysis of the metric when $\lambda_2 \neq \lambda_3 =\lambda_4$}

We shall study more about the metric $g = ds^2 + p(s)^2  dt^2 + h(s)^2  \tilde{g}$ of (\ref{mtr1a}) obtained in Lemma \ref{claim112b}.

\begin{lemma} \label{112typee}
Let $(M, g, f)$ be a four dimensional gradient Ricci soliton with harmonic Weyl curvature which satisfies
the hypothesis of Lemma \ref{claim112b}.
For the metric $g = ds^2 + p(s)^2  dt^2 + h(s)^2  \tilde{g}$ of (\ref{mtr1a}), the two dimensional metric $\tilde{g}$ has constant curvature, say $k$.

\end{lemma}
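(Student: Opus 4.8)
The plan is to show that the Ricci curvature of $g$ in the $E_3, E_4$ directions, as computed from the warped-product form $g = ds^2 + p(s)^2\,dt^2 + h(s)^2\,\tilde g$, forces the intrinsic curvature of $\tilde g$ to be a function of $s$ alone; since $\tilde g$ does not depend on $s$, this function must be constant. First I would record the relevant connection and curvature formulas for a metric of this triply-warped type. Writing $E_3, E_4$ for the orthonormal frame obtained by normalizing $\partial_{x_3}, \partial_{x_4}$ with the factor $h(s)$, one computes $\nabla_{E_i}E_1 = \frac{h'}{h}E_i$ for $i=3,4$, so the mixed curvature terms $R_{1ii1}$ contribute $-h''/h$, while the ``horizontal'' curvature $R_{3443}$ splits as a sum of a term built from $h'/h$ (the extrinsic part, coming from the warping over $s$) and the intrinsic Gauss curvature of $\tilde g$ rescaled by $h^{-2}$. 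Concretely, the standard warped-product identity gives
\begin{equation*}
R_{3443} = \frac{\tilde k}{h^2} - \Bigl(\frac{h'}{h}\Bigr)^2 - (\text{a term involving }p, h),
\end{equation*}
where $\tilde k$ denotes the Gauss curvature of $\tilde g$ as a function of $(x_3,x_4)$ only.

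Next I would invoke Lemma~\ref{raas}: the Ricci eigenvalues $\lambda_i$ depend only on $s$, and in particular $\lambda_3 = \lambda_4 = R_{33}$ depends only on $s$. Since $R_{33} = R_{1331} + R_{3443} + R_{3223}$ and the terms $R_{1331}$, $R_{3223}$ are manifestly functions of $s$ alone (they are built from $p, h$ and their $s$-derivatives via Lemma~\ref{claim112na}'s connection coefficients), it follows that $R_{3443}$ is a function of $s$ only. Combined with the displayed formula above, $\frac{\tilde k}{h^2}$ is a function of $s$ only; hence $\tilde k(x_3,x_4) = h(s)^2 \cdot (\text{function of }s)$. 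But $\tilde k$ does not depend on $s$ at all, so both sides must be constant. Therefore $\tilde k \equiv k$ for some constant $k$, i.e. $\tilde g$ has constant curvature.

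The one point that needs a little care is making sure the decomposition of $R_{33}$ into $s$-only pieces plus $R_{3443}$ is legitimate, i.e. that no $(x_3,x_4)$-dependence sneaks in through the other curvature components. This is handled by Lemma~\ref{claim112na}, which pins down every Christoffel-type coefficient ($\zeta_i$, $\beta_3$, $\beta_4$, $\rho$, $q$) and shows the frame $E_1, E_2$ and the frame $E_3, E_4$ each close up nicely; in particular $\beta_3, \beta_4$ arise from $\nabla_{E_3}E_3, \nabla_{E_4}E_4$ and, for the metric \eqref{mtr1a}, reduce to intrinsic data of $\tilde g$ rescaled by $h$, which when assembled into $R_{3443}$ produces exactly the $\tilde k/h^2$ term and nothing else with hidden $s$-dependence issues. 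So the main (mild) obstacle is bookkeeping: carefully separating the extrinsic warping contributions from the intrinsic Gauss curvature in $R_{3443}$ and confirming every other summand in $R_{33}$ is a function of $s$. Once that is done, the argument ``a function of $(x_3,x_4)$ equals $h(s)^2$ times a function of $s$, hence constant'' closes the proof immediately.
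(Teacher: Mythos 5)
Your argument is correct, but it is genuinely different from the one in the paper. The paper never touches the Gauss equation or the eigenvalue $\lambda_3$ here: it computes, in the coordinates of Lemma \ref{claim112b}, that $R_{ij}=-\tilde g_{ij}\bigl(hh''+\tfrac{p'}{p}hh'+(h')^2\bigr)+R^{\tilde g}_{ij}$ for fiber indices $i,j\in\{3,4\}$, observes that consequently $\nabla_k R_{ij}=\tilde\nabla_k R^{\tilde g}_{ij}$ for $i,j,k\in\{3,4\}$, and then feeds the harmonic Weyl identity $\nabla_kR_{ij}-\nabla_jR_{ik}=-\tfrac{R_j}{6}g_{ki}+\tfrac{R_k}{6}g_{ij}$ (with $R_j=R_k=0$ since $R$ depends on $s$ only) into this to conclude that $R^{\tilde g}$ is a Codazzi tensor on the two-dimensional fiber; the contracted second Bianchi identity in dimension two then forces $\tilde g$ to have constant curvature. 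You instead use the full strength of Lemma \ref{raas} ($\lambda_3=R_{33}$ is a function of $s$ alone), peel off the $s$-only summands $R_{1331}=-h''/h$ and $R_{2332}=-\tfrac{p'}{p}\tfrac{h'}{h}$ to conclude $R_{3443}$ depends on $s$ only, identify $R_{3443}+(h'/h)^2=\tilde k/h^2$ via the Gauss equation for the totally umbilic leaves, and finish by separation of variables. Both routes are sound. Yours is more elementary at this stage (pure bookkeeping with scalar quantities, no reapplication of the Codazzi/Bianchi machinery) and has the side benefit of producing the formula for $R_{3443}$ that reappears in Lemma \ref{112typeb1}; the paper's route uses less input (only that $R$ is a function of $s$, not that $\lambda_3$ is) and is the argument that generalizes to fibers of dimension $\geq 3$, as in Proposition \ref{lcf1}, where knowing the Ricci eigenvalue is constant on the fiber would not by itself force the fiber metric to be of constant curvature. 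One small point to keep tidy in your write-up: the frame obtained by normalizing $\partial_{x_3},\partial_{x_4}$ need not coincide with the adapted frame $E_3,E_4$, but since $\lambda_3=\lambda_4$ every unit vector in $D^2$ is a Ricci eigenvector with eigenvalue $\lambda_3$, so $R_{33}$ is frame-independent within $D^2$ and your decomposition goes through.
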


\begin{proof}
In local coordinates $(x_1:=s, \ x_2:=t, \  x_3, x_4)$ of  Lemma \ref{claim112b},  we write some Christofel symbols $\Gamma_{ij}^k$ and Ricci curvature of $g$.  In this proof, for any $(0,2)$-tensor $P$,  $P( \frac{\partial}{\partial_{x_i}}, \frac{\partial}{\partial_{x_j}} )$ shall be denoted by $P_{ij}$.  We let $\tilde{\nabla}$,   $\tilde{\Gamma}_{ij}^k$ and $R^{\tilde{g}}_{ij}$ be the Levi-Civita connection,  Christofel symbols and Ricci curvature of $\tilde{g}$, respectively.
 For $i,j,k \in \{ 3,4 \}$, we get;
\begin{eqnarray}
  \Gamma_{ij}^k  =  \tilde{\Gamma}_{ij}^k  \nonumber   \hspace{4.8cm} \\
R_{ij} =  - \tilde{g}_{ij} \{ h h^{''}   +   \frac{p^{'}}{p} h h^{'}    + { h^{'}}^2 \}  +R^{\tilde{g}}_{ij}.  \label{riccicompob}
 \end{eqnarray}
From (\ref{riccicompob}),  for $i,j,k \in \{ 3,4 \}$,  we have $\nabla_k  \tilde{g}_{ij}=\tilde{\nabla}_k  \tilde{g}_{ij}=0$
and  $\nabla_k  R^{\tilde{g}}_{ij}=\tilde{\nabla}_k  R^{\tilde{g}}_{ij}$
so that $\nabla_k R_{ij} = \tilde{\nabla}_k R^{\tilde{g}}_{ij}$.
The condition $\delta W=0$ gives
$\nabla_k R_{ij}  -\nabla_j R_{ik} = -  \frac{R_j}{6} g_{ki} + \frac{R_k}{6} g_{ij}$.
For $i, j, k  \in \{ 3,4 \}$,  $R_j=R_k=0$, so  $\nabla_k R_{ij} =\nabla_j R_{ik}$.

Then, we get $\tilde{\nabla}_k R^{\tilde{g}}_{ij} = \tilde{\nabla}_j R^{\tilde{g}}_{ik} $. By the contracted second Bianchi identity the 2-dimensional metric $\tilde{g}$ then has constant curvature.

\end{proof}

The metric $ \tilde{g}$ of Lemma \ref{112typee} is locally isometric to
 the Riemannian metric $g_0= dr^2 + u(r)^2 d \theta^2$ on a domain in $\mathbb{R}^2$ with polar coordinates $(r, \theta)$, where $u(r)=r$ when $k=0$,  $u(r) =\sin(\sqrt{k} \cdot r) $  when $k>0$ or  $ u(r)=\sinh(\sqrt{-k} \cdot r)$ when $k<0$.
We may identify  $ \tilde{g}$ with  $g_0$ locally and
set $e_3 = \frac{\partial}{\partial r}$ and $e_4 = \frac{1}{u(r)} \frac{\partial}{ \partial \theta }$, which then form an orthonormal basis of $\tilde{g}$.

\begin{lemma} \label{112typeb1}
For the local soliton metric $g = ds^2 + p(s)^2  dt^2 + h(s)^2  \tilde{g}$ of (\ref{mtr1a}) obtained in Lemma \ref{claim112b} with the metric $ \tilde{g}$ of constant curvature $k$,  if we set $E_1 = \frac{\partial }{\partial  s}$, $E_2 = \frac{1}{p(s)}\frac{\partial }{ \partial  t}$,  $E_3 =  \frac{1}{h(s)} e_3$ and $E_4 =  \frac{1}{h(s)} e_4$, where $e_3$ and $e_4$ are  as in the above paragraph, then the connection form, Ricci and scalar curvature of $g$ are as below. Here $R_{ij} = R(E_i, E_j)$ and $R_{ijkl} = R(E_i, E_j, E_k, E_l)$.

\bigskip
$\nabla_{E_1}E_i = 0$, for $i=1,2,3,4$.

$\nabla_{E_i}E_1 =   \zeta_iE_i$, for $i=2,3,4$ with $\zeta_2= \frac{p^{'}}{p}$, $\zeta_3=\zeta_4=  \frac{h^{'}}{h}$.

$\nabla_{E_2}  E_2 = -\zeta_2  E_1$, $\ \ \ \ \nabla_{E_3}  E_3 = -\zeta_3  E_1  $, $\ \ \ \  \nabla_{E_4}  E_4 = -\zeta_4  E_1 + \beta_4 E_3  $.

 $\nabla_{E_2}E_3 =\nabla_{E_3}E_2 = \nabla_{E_4}E_2  =   \nabla_{E_2}E_4 =   0$.

$\nabla_{E_3}E_4 = 0$, $\ \ \ \ \nabla_{E_4}E_3 =  -\beta_4 E_4,  \ $ where $  \ \beta_4 = \frac{u^{'}(r)}{h(s)u(r)}$.
\begin{eqnarray}  \label{ricci34}
R_{1221} &= -  \frac{p^{''}}{p}=-\zeta_2^{'}
 -  \zeta_2^2 \ = \ R_{1ii1} =-\zeta_i^{'}
 -  \zeta_i^2= -  \frac{h^{''}}{h},   \ \ {\rm  for}  \ i \geq 3. \hspace{1.8cm}   \nonumber \\
R_{11} & =  -3\zeta_2^{'}
 -  3\zeta_2^2  = - 3 \frac{h^{''}}{h}. \hspace{7.7cm}  \nonumber \\
R_{22} & =  -\zeta_2^{'}
 -  \zeta_2^2 -2\zeta_2 \zeta_3 = -\frac{h^{''}}{h} -2 \frac{p^{'}}{p} \frac{h^{'}}{h} . \hspace{5.5cm}  \nonumber  \\
R_{33} &=R_{44} =     -\zeta_3^{'}
 -  \zeta_3^2 -\zeta_3 \zeta_2  -(\zeta_3)^2  + \frac{k}{h^2}= -\frac{h^{''}}{h} - \frac{p^{'}}{p} \frac{h^{'}}{h} - \frac{(h^{'})^2}{h^2}  + \frac{k}{h^2}. \hspace{0.5cm}  \nonumber \\
R_{ij}  &=0, \ \ \ \ \ {\rm  if} \ \   i \neq j.  \hspace{8.4cm} \nonumber \\
R \ &= -6\zeta_3^{'}
 -  6\zeta_3^2 -4\zeta_3 \zeta_2  -2(\zeta_3)^2  + 2\frac{k}{h^2}= - 6 \frac{h^{''}}{h} -4  \frac{p^{'}}{p} \frac{h^{'}}{h}  - 2\frac{(h^{'})^2}{h^2}  + 2\frac{k}{h^2}     \nonumber
\end{eqnarray}

\end{lemma}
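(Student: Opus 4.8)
This is a direct computation, so I would simply work out the Levi-Civita connection and then the curvature of the explicit metric $g = ds^2 + p(s)^2\,dt^2 + h(s)^2\tilde g$, using Lemma \ref{112typee} (which lets me write $\tilde g$ locally as $dr^2+u(r)^2d\theta^2$ with $u$ the standard model function), Lemma \ref{claim112na} (which already fixes the shape of all connection forms up to a few unknown functions), and the soliton identity (\ref{0110g}). First I would record the brackets: since $p,h$ depend on $s$ only while $e_3=\partial_r,\ e_4=\tfrac1u\partial_\theta$ involve only the fiber variables, one gets $[E_1,E_2]=-\tfrac{p'}{p}E_2$, $[E_1,E_3]=-\tfrac{h'}{h}E_3$, $[E_1,E_4]=-\tfrac{h'}{h}E_4$, $[E_2,E_3]=[E_2,E_4]=0$ and $[E_3,E_4]=-\tfrac{u'}{hu}E_4$ (the last from $[e_3,e_4]=-\tfrac{u'}{u}e_4$ on a constant-curvature surface). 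Plugging these into the Koszul formula $2\langle\nabla_{E_i}E_j,E_k\rangle = \langle[E_i,E_j],E_k\rangle-\langle[E_i,E_k],E_j\rangle-\langle[E_j,E_k],E_i\rangle$, valid for an orthonormal frame, yields all $\nabla_{E_i}E_j$; in particular $\zeta_2=\tfrac{p'}{p}$, $\zeta_3=\zeta_4=\tfrac{h'}{h}$, $\beta_3=0$, $\beta_4=\tfrac{u'}{hu}$, and the functions $\rho,q$ of Lemma \ref{claim112na} vanish in this frame, giving $\nabla_{E_1}E_i=0$ for all $i$. This re-derives, consistently, the connection relations of Lemma \ref{claim112na}.

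With the connection in hand I would compute the curvature from $R_{ijkl}=\langle\nabla_{E_i}\nabla_{E_j}E_k-\nabla_{E_j}\nabla_{E_i}E_k-\nabla_{[E_i,E_j]}E_k,E_l\rangle$. The planes through $E_1$ give $R_{1221}=-(\zeta_2'+\zeta_2^2)=-\tfrac{p''}{p}$ and $R_{1ii1}=-(\zeta_i'+\zeta_i^2)=-\tfrac{h''}{h}$ for $i=3,4$; the mixed-fiber planes give $R_{2332}=R_{2442}=-\zeta_2\zeta_3$; and the $\tilde g$-fiber plane gives $R_{3443}=\tfrac{k}{h^2}-\zeta_3^2$, which is precisely the Gauss equation for the totally umbilic leaf -- intrinsic curvature $\tfrac{k}{h^2}$ minus the product of umbilicity coefficients. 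All off-diagonal $R_{ijkl}$ vanish because the connection relations above have no cross terms other than the umbilic one; equivalently one may quote the curvature formulas for the multiply warped product $\mathbb{R}\times_p\mathbb{R}\times_h N$.

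Finally I would trace: $R_{11}=R_{1221}+R_{1331}+R_{1441}$, $R_{22}=R_{1221}+R_{2332}+R_{2442}$, $R_{33}=R_{44}=R_{1331}+R_{2332}+R_{3443}$, $R_{ij}=0$ for $i\neq j$, and $R=\sum_iR_{ii}$. The single point that is more than bookkeeping is that (\ref{0110g}) -- available here since $g$ is a gradient Ricci soliton with $\delta W=0$ and we sit in $M_{\mathcal A}\cap\{\nabla f\neq0\}$ -- forces $R_{1221}=R_{1jj1}=\tfrac13R_{11}$, hence $\tfrac{p''}{p}=\tfrac{h''}{h}$, so $R_{11}=-3\tfrac{h''}{h}$ and the $\zeta_2$- and $\zeta_3$-versions of each formula agree; substituting $\zeta_2=\tfrac{p'}{p}$, $\zeta_3=\tfrac{h'}{h}$ and using $\zeta_i'+\zeta_i^2=\tfrac{(\,\cdot\,)''}{(\,\cdot\,)}$ then reproduces the displayed expressions. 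I expect no real obstacle; the only place a careless error would creep in is in carrying the $\tfrac1h$ factors and fiber-frame signs through the rescaling from $\tilde g$ to $h^2\tilde g$.
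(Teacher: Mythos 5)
Your proposal is correct and follows essentially the same route as the paper: the paper's proof simply says the formulas follow by direct computation and that $\frac{p''}{p}=\frac{h''}{h}$ comes from (\ref{0110g}), which is exactly the one non-routine step you isolate. (The only cosmetic discrepancy is the sign of $\beta_4$ versus the bracket $[E_3,E_4]=-\frac{u'}{hu}E_4$, an orientation convention for $e_4$ already present in the paper's own statement and irrelevant to every curvature formula.)
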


\begin{proof} One may verify all the formulas by direct computation. In particular $\zeta_2= \frac{p^{'}}{p}$ and $\zeta_3=\zeta_4=  \frac{h^{'}}{h}$.  We get $\frac{p^{''}}{p} =  \frac{h^{''}}{h}$ from  (\ref{0110g}).

\end{proof}

What emerges from above discussions can be highlighted as
the following soliton on an open set, which results from Lemma \ref{claim112b} and \ref{112typee};

A four dimensional gradient Ricci soliton $(M, g, f)$ with harmonic Weyl curvature
has a connected coordinate neighborhood $(V, (s,t, x_3, x_4))  \subset M_{\mathcal{A}} \cap \{ \nabla f \neq 0  \}$, in which

\begin{equation} \label{mtr1}
g= ds^2 +  p(s)^2  dt^2 +    h(s)^2 \tilde{g}\ \ \ \ {\rm  on} \ V,
\end{equation}
where
 $\tilde{g}$ is a 2-dimensional Riemannian metric of constant curtvature $k$ on an $(x_3, x_4)$-domain.
 We have the adapted frame fields
  \begin{eqnarray} \label{mtr11}
\ \ \ \ \ \ \ \ \  E_1 = \frac{\nabla f }{ |\nabla f |}=\frac{\partial }{\partial s}, \ \  \ \ E_2 =\frac{1}{p} \frac{\partial }{\partial t},  \ \ \  E_3 =\frac{1}{h} e_3, \ \ \  E_4 =\frac{1}{h} e_4  \ \ {\rm  on} \ V, \nonumber \\
\ \      {\rm and} \ \ \lambda_2   \neq \lambda_3=  \lambda_4,   \hspace{7cm}
\end{eqnarray}
 where $e_3$ and $e_4$ are an orthonormal frame fields of $ \tilde{g}$ as in Lemma \ref{112typeb1}.

\bigskip

\begin{remark} \label{realan}
As mentioned in Section 2, $g$ and $f$ are real analytic (in harmonic coordinates), so is $|\nabla f|$ where $\nabla f \neq 0$. The Ricci eigenvalues $\lambda_i$ are real analytic in $M_{\mathcal{A}} \cap \{ \nabla f \neq 0  \}$.
So are $\zeta_i(s)=\frac{ 1}{|\nabla f |} (\lambda -   \lambda_i ) $.

Also $R^{'}= dR(E_1)$ is real analytic since it equals $dR( \frac{\nabla  f}{|\nabla  f|} )$. From (\ref{0110g}) $R(E_1, E_2, E_2, E_1) $ is real analytic.
As $-\zeta_2^{'} - \zeta_2^2=-\zeta_3^{'} - \zeta_3^2  =R(E_1, E_2, E_2, E_1)$,   $\zeta_2^{'}$ as well as $\zeta_3^{'}$  are real analytic.

\medskip
To exploit the real analyticity, we shall use the following simple fact; if $P \cdot Q$ equals zero (identically)  on an open connected set $W$ for two real analytic functions $P$ and $Q$, then either $P$ equals zero on $W$ or $Q$ equals zero on $W$.
\end{remark}

For the rest of this section we denote $a:=\zeta_2$ and $b:=\zeta_3$ for convenience.

In the adapted frame field $\{ E_i \}$ of (\ref{mtr11}),
we can write components of the soliton equation $\nabla d f(E_i, E_i) =  -(Rc- \lambda g)(E_i, E_i)$, $i=1,2,3$ as follows;
\begin{eqnarray}
f^{''}  &=   3a^{'}
 + 3a^2 +  \lambda. \hspace{2.2cm} \label{soliton00} \\
f^{'} a &=      a^{'}
 +  a^2 +2b a   + \lambda .  \hspace{1.4cm}  \label{solitonii} \\
 f^{'} b &=      b^{'}
 + b^2 +b a  +b^2  -\frac{k}{h^2}   + \lambda.  \label{solitonjj}
 \end{eqnarray}

\bigskip
In the next section we are going to deduce several linear or quadratic equations in $a$ and $b$ from (\ref{soliton00})-(\ref{solitonjj})
and $\delta W=0$.
But before we get to it, in the next three lemmas we shall understand three linear cases (when $a=0$, $b=0$ and  $a+b=0$  on a domain).

\begin{lemma} \label{bb1}
For the soliton metric $g$ of (\ref{mtr1})   with harmonic Weyl curvature and with the adapted frame fields (\ref{mtr11}),
the function $a$  cannot vanish on $V$.
\end{lemma}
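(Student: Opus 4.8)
The plan is to argue by contradiction: assume $a = \zeta_2$ vanishes identically on $V$ (or, by real analyticity as in Remark \ref{realan}, on an open subset, which then forces it on all of $V$), and derive a contradiction with $\lambda_2 \neq \lambda_3$. Since $a = \zeta_2 = p'/p$, the hypothesis $a \equiv 0$ means $p$ is constant, so the metric $g = ds^2 + p^2\,dt^2 + h(s)^2\tilde g$ splits off a flat $dt^2$-direction. First I would feed $a \equiv 0$ (hence $a' \equiv 0$) into the soliton equations (\ref{soliton00})--(\ref{solitonjj}). Equation (\ref{soliton00}) becomes $f'' = \lambda$, and equation (\ref{solitonii}) degenerates to $0 = \lambda$; thus $\lambda = 0$ and $f'' = 0$, so $f' = f'(s)$ is a (nonzero, since $\nabla f \neq 0$ on $V$) constant, say $f' = \mu \neq 0$.

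Next I would use the curvature relations of Lemma \ref{112typeb1} (valid here because we are in exactly the situation $\lambda_2 \neq \lambda_3 = \lambda_4$). With $\zeta_2 = a = 0$ we read off $R_{1221} = -\zeta_2' - \zeta_2^2 = 0$, and the identity $R_{1221} = R_{1ii1} = -\zeta_3' - \zeta_3^2 = -h''/h$ from (\ref{0110g})/Lemma \ref{112typeb1} forces $\zeta_3' + \zeta_3^2 = 0$, i.e. $h'' = 0$, so $b = \zeta_3 = h'/h$ with $h$ affine in $s$. Then $R_{22} = -\zeta_2' - \zeta_2^2 - 2\zeta_2\zeta_3 = 0$, so $\lambda_2 = R_{22} = 0$. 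On the other hand $R_{33} = -\zeta_3' - \zeta_3^2 - \zeta_2\zeta_3 - \zeta_3^2 + k/h^2 = -\zeta_3^2 + k/h^2 = -(h'/h)^2 + k/h^2$. Plugging $a \equiv 0$, $\lambda = 0$, $f' = \mu$ into (\ref{solitonjj}) gives $\mu\, b = b' + 2b^2 - k/h^2 = -b^2 + (b' + b^2) + b^2 - k/h^2 = -b^2 + b^2 - k/h^2 = -k/h^2$ wait—more carefully, $b' + b^2 = 0$ so $b' = -b^2$, hence $\mu b = -b^2 + 2b^2 - k/h^2 = b^2 - k/h^2$; but $R_{33} = -b^2 + k/h^2 = -(b^2 - k/h^2) = -\mu b$, consistent. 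The point is that $\lambda_3 = R_{33} = -\mu b$, while $\lambda_2 = 0$. Since $\lambda_2 \neq \lambda_3$ we need $\mu b \not\equiv 0$, so $b \neq 0$ on an open set; fine so far. The contradiction must instead come from the full set of equations: from $\mu b = b^2 - k/h^2$ and $b' = -b^2$ together with $b = h'/h$, differentiate or cross-compare to pin down $h$ and $b$ explicitly, then check (\ref{soliton00}) — now reading $\mu' = 0 = 3a' + 3a^2 + \lambda = 0$, automatically satisfied — against the remaining Bianchi/$\delta W = 0$ content, or against the second Bianchi identity $\frac12 R' = R_{11} f'$ from Lemma \ref{solitonformulas}(i): here $R = R_{11} + R_{22} + 2R_{33} = -3\zeta_3^2 \cdot 0\ \ldots$ I would recompute $R$ and $R'$ from Lemma \ref{112typeb1} with $a \equiv 0$ and impose $\frac12 R' = R_{11} f' = (-3h''/h)\mu = 0$, forcing $R' = 0$, hence $R$ constant, which combined with the explicit form of $h$ (affine, or $h'' = 0$) squeezes out either $b \equiv 0$ (killing $\lambda_2 \neq \lambda_3$) or $k = 0$ and $h$ constant (making $g$ a product of two flat factors, hence flat-in-the-$E_2,E_3,E_4$ directions, forcing $\lambda_3 = 0 = \lambda_2$, again a contradiction).

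The main obstacle I expect is bookkeeping: there are several cases hidden in "solve $b' = -b^2$" ($b \equiv 0$, or $b = 1/(s - s_0)$) and in the sign/value of $k$, and one must check each against all of (\ref{soliton00})--(\ref{solitonjj}), the relation $p''/p = h''/h$, and $\frac12 R' = R_{11}f'$. The cleanest route is probably: (1) $a \equiv 0 \Rightarrow \lambda = 0$, $f' = \mu \neq 0$ constant (from (\ref{soliton00}), (\ref{solitonii})); (2) $R_{1221} = 0 \Rightarrow h'' = 0$, so $b' + b^2 = 0$; (3) $\frac12 R' = R_{11}\mu$ with $R_{11} = -3h''/h = 0$ gives $R' \equiv 0$; (4) compute $R$ explicitly in terms of $b$ and $h$ and set its $s$-derivative to zero; (5) conclude $b$ and $k/h^2$ are forced to be constant, incompatible with $b' = -b^2$ unless $b \equiv 0$; (6) $b \equiv 0$ then gives $\lambda_3 = R_{33} = k/h^2 \cdot(\text{something})$ and comparison with $\lambda_2 = 0$ via (\ref{solitonjj}) yields $k = 0$ as well, whence $g$ is flat and $\lambda_2 = \lambda_3 = 0$, contradicting the standing hypothesis $\lambda_2 \neq \lambda_3$. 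This contradiction proves $a$ cannot vanish on $V$.
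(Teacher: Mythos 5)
Your proposal is correct and follows essentially the same route as the paper: $a\equiv 0$ forces $\lambda=0$ via (\ref{solitonii}) and $f'$ constant via (\ref{soliton00}), the identity $-\zeta_2'-\zeta_2^2=-\zeta_3'-\zeta_3^2$ gives $b'+b^2=0$, and (\ref{solitonjj}) then yields the contradiction. The only (harmless) difference is at the last step, where the paper integrates $b=1/(s-c)$ explicitly and reads off $f'=\frac{1}{s-c}(1-k/c_h^2)=0$, while you instead invoke $R'=2R_{11}|\nabla f|=0$ to force $b$ constant, hence $b\equiv 0$ and $k=0$.
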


\begin{proof}
If $a=0$, then $b^{'} + b^2 = a^{'} + a^2=0$. Integrate for $b= \frac{h^{'}}{h}$ to get $\frac{h^{'}}{h}=\frac{1}{s-c}$ for a constant $c$,  as $b \neq a=0$.  So, $h = c_h (s-c)$, for a constant $c_h \neq 0$.  From (\ref{solitonii}), $\lambda=0$. From  (\ref{soliton00}), $f^{''} = 0$ and $f^{'}$ is constant. From (\ref{solitonjj}) we get $f^{'}= \frac{1}{s-c} (1 - \frac{k}{c_h^2} )$. Then, $c_h^2=k>0$ and $f^{'} =0$. So, $g$ is Einstein, a contradiction to the hypothesis $\lambda_2   \neq \lambda_3.$
\end{proof}

\begin{lemma} \label{bb1}
For the soliton metric $g$ of (\ref{mtr1})   with harmonic Weyl curvature and with the adapted frame fields (\ref{mtr11}), assume that $b=0$ on $V$. Then
 $g$ is locally
 isometric to a domain in $ \mathbb{R}^2 \times (N, \tilde{g})$ with $g= ds^2 + s^2 dt^2+  \tilde{g} $, where $\tilde{g}$ is a Riemannian metric of constant curvature $\lambda \neq 0$ on a two dimensional manifold $N$.
And $f = \frac{\lambda}{2}s^2 +C_1$, for a nonzero constant $C_1$.
\end{lemma}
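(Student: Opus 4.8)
The plan is to substitute the hypothesis $b = \zeta_3 = \frac{h^{'}}{h} \equiv 0$ into the warped-product soliton equations (\ref{soliton00})--(\ref{solitonjj}) together with the identity $\frac{p^{''}}{p} = \frac{h^{''}}{h}$ of Lemma \ref{112typeb1}, and then to integrate the resulting ordinary differential equations. First, $b \equiv 0$ forces $h^{'} \equiv 0$, so $h$ is a positive constant; replacing $\tilde g$ by $h^{2}\tilde g$ we may assume $h \equiv 1$, and $\tilde g$ still has some constant curvature. Since $h$ is constant, $\frac{p^{''}}{p} = \frac{h^{''}}{h} = 0$, so $p$ is an affine function of $s$; because the preceding lemma forbids $a = \zeta_2 = \frac{p^{'}}{p}$ from vanishing on $V$, the function $p$ is nonconstant, and since $s$ is determined only up to an additive constant we may translate it so that $p(s) = c_{1}s$ with $c_{1} \neq 0$. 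Then $a = \frac{1}{s}$ and $a^{'} + a^{2} = 0$.

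Next I would read off $\lambda$ and $f$. Setting $b = b^{'} = 0$ in (\ref{solitonjj}) gives $\lambda = \frac{k}{h^{2}}$, i.e. $\lambda$ equals the curvature of $\tilde g$ after the rescaling; moreover $\lambda \neq 0$, for otherwise $f^{'} = |\nabla f| \equiv 0$ and the soliton equation $\nabla d f = -Rc + \lambda g$ would make $g$ Einstein, contradicting $\lambda_{2} \neq \lambda_{3}$ (indeed $R_{22} = 0 \neq \frac{k}{h^{2}} = R_{33}$ already gives $k \neq 0$). Feeding $a = \frac{1}{s}$, $a^{'} + a^{2} = 0$, $b = 0$ into (\ref{solitonii}) yields $f^{'}a = \lambda$, hence $f^{'} = \lambda s$ and $f = \frac{\lambda}{2}s^{2} + C_{1}$ for a constant $C_{1}$; one checks that (\ref{soliton00}) is then automatically consistent, $f^{''} = \lambda = 3a^{'} + 3a^{2} + \lambda$.

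Finally I would identify the metric. Rescaling $t$ to $\bar t := c_{1}t$ turns $p(s)^{2}\,dt^{2}$ into $s^{2}\,d\bar t^{2}$, so on $V$ one has $g = ds^{2} + s^{2}\,d\bar t^{2} + \tilde g$. The summand $ds^{2} + s^{2}\,d\bar t^{2}$ is the Euclidean metric on a domain of $\mathbb{R}^{2}$ in polar coordinates $(s,\bar t)$ (its Gauss curvature $-\frac{u^{''}}{u}$ vanishes for $u(s) = s$), with $|x|^{2} = s^{2}$ there, while $\tilde g$ has constant curvature $\lambda \neq 0$ on a $2$-manifold $N$. Hence $(V,g)$ is isometric to a domain in $\mathbb{R}^{2}\times(N,\tilde g)$ with $g = ds^{2} + s^{2}dt^{2} + \tilde g$, and $f = \frac{\lambda}{2}|x|^{2} + C_{1}$ on the Euclidean factor.

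I do not expect a substantive obstacle: the whole argument is the explicit integration above. The only delicate points are the bookkeeping of the three harmless normalizations (rescaling $\tilde g$ by the constant $h^{2}$, translating $s$ so that $p$ is linear through the origin, and rescaling $t$ by $c_{1}$), so that the stated normal form appears verbatim, and the remark that no further components of the soliton equation need checking, since they hold by hypothesis.
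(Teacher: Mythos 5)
Your proposal is correct and follows essentially the same route as the paper: set $b=0$, use $\frac{p''}{p}=\frac{h''}{h}$ (equivalently $a'+a^2=b'+b^2=0$) to make $h$ constant and $p$ affine and nonvanishing in $s$, read off $\lambda=\frac{k}{h^2}$ from (\ref{solitonjj}) and $f'=\lambda s$ from (\ref{solitonii}), rule out $\lambda=0$ via the Einstein contradiction with $\lambda_2\neq\lambda_3$, and normalize $s$, $t$ and $\tilde g$ to reach the stated form. The only cosmetic difference is that the paper integrates $a'+a^2=0$ directly to get $p=c_p(s-c_1)$, whereas you integrate $p''=0$; these are the same computation.
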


\begin{proof}
If $b=0$, then $a^{'} + a^2=0$. Integrate for $a= \frac{p^{'}}{p}$ to get $ \frac{p^{'}}{p}=\frac{1}{s-c_1}$ for a constant $c_1$,  as $a \neq b=0$.  So, $p = c_p (s-c_1)$, for a constant $c_p \neq 0$.   As $h$ is constant, we set $h=h_0>0$.

From  (\ref{solitonii}), $f^{'} =  \lambda (s-c_1)$.
We get $f(s) = \frac{1}{2} \lambda (s-c_1)^2+C_1$.  If $\lambda=0$, then $f$ is constant and $g$ is Einstein, which violates  $\lambda_2   \neq \lambda_3$ hypothesis.  So, $\lambda \neq 0$.
 From (\ref{solitonjj}), we have $ \frac{k}{h_0^2}=\lambda$. And by absorbing a constant to the variable $t$, we can write the metric $g= ds^2 + (s-c_1)^2 dt^2+  h_0^2 \tilde{g} $, where $h_0^2\tilde{g}$ is a Riemannian metric of constant curvature $ \frac{k}{h_0^2}=\lambda$.  The metric $g$ is isometric to $ ds^2 + s^2 dt^2+   h_0^2  \tilde{g} $. This proves the lemma.
\end{proof}

\begin{lemma} \label{bb6}
For the soliton metric $g$ of (\ref{mtr1})   with harmonic Weyl curvature and with the adapted frame fields (\ref{mtr11}), the function $a+b$  cannot vanish on $V$.
\end{lemma}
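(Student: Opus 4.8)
The plan is to assume, for contradiction, that $a+b$ vanishes identically on $V$ and to play this hypothesis off against the ODE system of Section 5. Since $a=\zeta_2$, $b=\zeta_3$, and the standing hypothesis is $\lambda_2\neq\lambda_3$ (equivalently $a\neq b$), the relation $b=-a$ immediately forces $a$ to be nowhere zero on $V$ and gives $b^2=a^2$. First I would combine two facts: the curvature identity $R_{1221}=R_{1331}$ from Lemma \ref{112typeb1} (see also (\ref{0110g})), which reads $-a'-a^2=-b'-b^2$, hence $a'+a^2=b'+b^2$ and so $a'=b'$ because $a^2=b^2$; and the derivative of the hypothesis, $a'+b'=0$. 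Together these give $a'=b'=0$, so $a$ and $b$ are mutually opposite, nonzero constants on $V$.

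Next I would substitute $a'=b'=0$ and $b=-a$ into the soliton equations (\ref{soliton00})--(\ref{solitonjj}). From (\ref{solitonii}) one gets $f'a=a^2+2ba+\lambda=\lambda-a^2$; since $a$ is a nonzero constant this shows $f'$ is constant, hence $f''=0$, and then (\ref{soliton00}) yields $\lambda=-3a^2$ and consequently $f'=-4a$. Feeding $b=-a$, $\lambda=-3a^2$ and $f'=-4a$ into (\ref{solitonjj}), namely $f'b=2b^2+ba-\tfrac{k}{h^2}+\lambda$, gives $4a^2=-2a^2-\tfrac{k}{h^2}$, so $\tfrac{k}{h^2}=-6a^2$, a nonzero constant on $V$.

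The contradiction then comes from the shape of the metric (\ref{mtr1}): because $h'/h=b=-a$ is a nonzero constant, $h(s)=h_0e^{-as}$ with $h_0>0$, so $\tfrac{k}{h^2}=(k/h_0^2)e^{2as}$ is non-constant on $V$ unless $k=0$; but $k=0$ makes $-6a^2=0$, contradicting $a\neq0$, while $k\neq0$ contradicts $\tfrac{k}{h^2}\equiv-6a^2$. Either alternative is impossible, so $a+b$ cannot vanish on $V$.

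I do not anticipate a genuine obstacle here: the whole argument is a substitution into the already-derived ODEs, carried out locally on the connected neighborhood $V$ where $a,b,h$ are honest functions of $s$. The only points demanding care are invoking $\lambda_2\neq\lambda_3$ to guarantee $a\neq0$ (so that ``$f'a=\lambda-a^2$'' legitimately forces $f'$ to be constant), and bookkeeping of signs when passing between the curvature formulas of Lemma \ref{112typeb1} and the soliton equations (\ref{soliton00})--(\ref{solitonjj}).
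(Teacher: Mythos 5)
Your proposal is correct and follows essentially the same route as the paper: both first deduce from $R_{1221}=R_{1331}$ (i.e.\ $a'+a^2=b'+b^2$) together with $a^2=b^2$ that $a$ and $b$ are opposite nonzero constants, and then substitute into the soliton ODEs (\ref{soliton00})--(\ref{solitonjj}), using the non-constancy of $h=h_0e^{-as}$ to force $k=0$ and arrive at a contradiction with $a\neq 0$. The only difference is bookkeeping: the paper extracts $k=\lambda=0$ from (\ref{solitonii})--(\ref{solitonjj}) first and gets the contradiction from (\ref{soliton00}), whereas you determine $\lambda=-3a^2$ from (\ref{soliton00}) and land the contradiction in (\ref{solitonjj}); both orderings are valid.
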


\begin{proof} Suppose $a+b=0$ on $V$. Then $a^{'} -b^{'} = b^2 -a^2 =0$.  So, $a-b = C$, a constant.
   Then $a=  \frac{p^{'}}{p}= \frac{C}{2}, b= \frac{h^{'}}{h}=  -\frac{C}{2}$. As $a \neq b$, $C \neq 0$.
  Then $h=  c_h e^{ -\frac{C}{2}s}$ for a constant $c_h>0$.
  Put it into (\ref{solitonii}) and  (\ref{solitonjj}), and we have
  $ k  = \lambda=0$ and $f^{'}$ is a constant. Then (\ref{soliton00}) gives
$C^2=0$, which is a contradiction.
\end{proof}

\section{Characterization of the metric when $\lambda_2 \neq \lambda_3 =\lambda_4$}

In this section we shall characterize the soliton metric $g$ of (\ref{mtr1})   with harmonic Weyl curvature and with the adapted frame fields (\ref{mtr11}).

\smallskip
From (\ref{solitonii}) and (\ref{solitonjj}),
\begin{equation} \label{2356}
 (a -b )  f^{'} =   b (a - b) + \frac{k}{h^2}.
 \end{equation}

\noindent Differentiating,
$ (a -b )^{'}   f^{'} + (a -b )   f^{''}=   b^{'} (a - b)  +b (a - b)^{'} -2\frac{k h^{'}}{h^3}.    $

\noindent Meanwhile, from (\ref{soliton00}), (\ref{2356}) and  $a^{'} -b^{'}=-a^{2} +b^{2}  $,
\begin{eqnarray*}
(a -b )^{'}   f^{'} + (a -b )   f^{''} = &  -(a^{2} -b^{2} )   f^{'} + (a -b )   (  - \lambda_1 + \lambda)  \hspace{3cm} \\
=&
  (a +b )  \{  -b (a - b) - \frac{k}{h^2}   \} + (a -b )   (  - \lambda_1 + \lambda). \hspace{0.6cm}
\end{eqnarray*}
So, we get

$   b^{'} (a - b)  +b (b^2 - a^2) -2\frac{k h^{'}}{h^3}= (a +b )  \{  -b (a - b) - \frac{k}{h^2}   \} + (a -b )   (  - \lambda_1 + \lambda).$
Then, as $b = \frac{h^{'}}{h}$,
 \begin{eqnarray*}
b^{'} (a - b) = & (a +b )  \{  - \frac{k}{h^2}   \} + 2\frac{k h^{'}}{h^3} + (a -b )   (  - \lambda_1 + \lambda) \\
  = &  (a -b )  \{  - \frac{k}{h^2}   \} + (a -b )   (  - \lambda_1 + \lambda). \hspace{1.5cm}
\end{eqnarray*}

\noindent As $\lambda_2 \neq \lambda_3$, we have $a -b  \neq 0$. We then have;

\begin{equation} \label{c120}
   2(b^{'} + b^2) + b^2   -  \frac{k}{h^2}
 + \lambda =0.
 \end{equation}

\noindent
From (\ref{solitonii}), (\ref{solitonjj}) and $b^{'} = a^{'} +a^2 - b^2 $, we have

$b(   a^{'}
 +  a^2 +2b a   + \lambda) = a(     b^{'}
 + b^2 +b a  +b^2  -\frac{k}{h^2}   + \lambda ),$
and so
\begin{equation}  \label{c102}
-(a-b)a^{'} -a^3  +ab^2  + \lambda(b-a)= -a\frac{k}{h^2}.
 \end{equation}

\medskip
 Next, we shall exploit the harmonic Weyl curvature condition.
In $\{ E_i \}$, we have
$\nabla_k R_{ij}  -\nabla_j R_{ik} = -  \frac{R_j}{6} g_{ki} + \frac{R_k}{6} g_{ij}$. Then as $\nabla_{E_1}E_2 =  \nabla_{E_1}E_3=0$,
\begin{eqnarray} \label{solitonii81}
0 & =  \nabla_1 R_{22}  - \nabla_2 R_{12}   - \frac{R^{'}}{6} \hspace{4.4cm} \nonumber \\
&=   \nabla_1  (R_{22})   +  R ( \nabla_{E_{2} }  E_{1},   E_{2})   + R ( \nabla_{E_2 }E_2,   E_1) - \frac{R^{'}}{6} \nonumber \\
& =   ( R_{22})^{'}  +
a R_{22} -a R_{11} - \frac{R^{'}}{6}. \hspace{3.3cm}
\end{eqnarray}
\begin{eqnarray} \label{solitonii83}
0 & =  \nabla_1 R_{33}  - \nabla_3 R_{13} - \frac{R^{'}}{6} \hspace{4.4cm} \nonumber \\
 &=   \nabla_1  (R_{33})   +  R ( \nabla_{E_3 }  E_1,   E_3)   + R ( \nabla_{E_3 }E_3,   E_1)  - \frac{R^{'}}{6}\nonumber \\
& =   (  R_{33})^{'}   +b R_{33} -b R_{11}  - \frac{R^{'}}{6}. \hspace{3.4cm}
\end{eqnarray}

\noindent Subtracting (\ref{solitonii83}) from (\ref{solitonii81}), with Lemma \ref{112typeb1}  we get

$( -a b     +b^2  - \frac{k}{h^2}  )^{'}  +
a(- a^{'}
 -  a^2 -2a b )  -(a -b)(-3a^{'}
 -  3a^2 )     -b ( - b^{'}
 -  b^2 -b a  -b^2  + \frac{k}{h^2} )  =0$,
 from which we obtain
 \begin{equation} \label{c101}
 -(a-b)a^{'} -a^3 + b^3 + 2a^2b -2ab^2 = b\frac{k}{h^2}.
 \end{equation}

 \medskip
\noindent Subtracting  (\ref{c102}) from (\ref{c101}),

\begin{equation}  \label{c112}
(a-b) ( 2ab  -b^2+\lambda )= (a+b)\frac{k}{h^2}.
 \end{equation}

\begin{lemma} \label{bb8}
For the soliton metric $g$ of (\ref{mtr1})  with harmonic Weyl curvature and with the adapted frame fields (\ref{mtr11}), assume that $k \neq 0$. Then the following holds;
\begin{equation} \label{c113}
b(\lambda + 3ab )(\lambda -2a^2 +ab  )=0.
\end{equation}
\end{lemma}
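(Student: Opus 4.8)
The plan is to combine the several scalar identities already derived in this section and eliminate the term $k/h^2$ using the real-analyticity dichotomy of Remark \ref{realan}. The key inputs are equation (\ref{2356}), which reads $(a-b)f' = b(a-b) + k/h^2$; equation (\ref{c120}), $2(b'+b^2) + b^2 - k/h^2 + \lambda = 0$; equation (\ref{c112}), $(a-b)(2ab - b^2 + \lambda) = (a+b)k/h^2$; and the soliton equation (\ref{solitonjj}) together with $b' = a' + a^2 - b^2$. Since $k\neq 0$, from (\ref{solitonjj}) one gets $k/h^2 = b' + 2b^2 + ab - f'b + \lambda$, and more usefully, multiplying (\ref{2356}) by $(a+b)$ and comparing with (\ref{c112}) gives $(a+b)(a-b)f' - (a+b)b(a-b) = (a-b)(2ab-b^2+\lambda)$, i.e. after dividing by $a-b\neq 0$ (which holds since $\lambda_2\neq\lambda_3$), a relation $(a+b)f' = (a+b)b + 2ab - b^2 + \lambda$. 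This lets one express $f'$ rationally in $a,b,\lambda$ provided $a+b\neq 0$, and Lemma \ref{bb6} guarantees $a+b$ does not vanish on $V$. The strategy is then to substitute this expression for $f'$ back into (\ref{solitonii}) (which gives $f'a = a' + a^2 + 2ab + \lambda$) to obtain a first relation, and into a second independent combination, and then to use (\ref{c120}) to remove $b'$ (hence $a'$, via $a' = b' - a^2 + b^2$) and $k/h^2$, arriving at a polynomial identity in $a,b,\lambda$ alone.

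First I would derive the clean consequence $(a+b)f' = 2ab - b^2 + \lambda + b(a+b) = 3ab - b^2 + ab + \lambda$, being careful with the algebra — let me instead write it as: from (\ref{c112}) and (\ref{2356}), eliminate $k/h^2$ by taking $(a+b)\times$(\ref{2356}) minus (\ref{c112}), yielding $(a-b)\big[(a+b)f' - (a+b)b - 2ab + b^2 - \lambda\big] = 0$, hence $(a+b)f' = (a+b)b + 2ab - b^2 + \lambda$. Next, from (\ref{solitonii}), $f'a = a' + a^2 + 2ab + \lambda$; multiply this by $(a+b)$ and substitute the expression for $(a+b)f'$: $a\big[(a+b)b + 2ab - b^2 + \lambda\big] = (a+b)(a' + a^2 + 2ab + \lambda)$. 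Using $a' = -a^2 + b^2 + b'$ and then (\ref{c120}) to write $b' = \tfrac12(k/h^2 - b^2 - \lambda) - b^2 = \tfrac12 k/h^2 - \tfrac32 b^2 - \tfrac12\lambda$, and eliminating the remaining $k/h^2$ via (\ref{2356}) (so $k/h^2 = (a-b)f' - b(a-b)$, and $f'$ is itself already expressed), everything collapses to a polynomial equation in $a,b,\lambda$. I would then factor that polynomial; the claim is it factors as a constant times $b(\lambda+3ab)(\lambda - 2a^2 + ab)$ — or rather, the polynomial obtained is divisible by $(a+b)$ (since we multiplied through by it at one point, this is the spurious factor we must discard), and the genuine factor is $b(\lambda+3ab)(\lambda-2a^2+ab)$.

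An alternative and perhaps cleaner route, which I would pursue in parallel as a check, is to directly manipulate the three "quadratic-type" relations (\ref{c120}), (\ref{c112}), and one more obtained by differentiating (\ref{c112}) or by combining (\ref{c101})–(\ref{c102}). Specifically, (\ref{c112}) expresses $k/h^2$ in terms of $a,b,\lambda$ (dividing by $a+b\neq 0$): $k/h^2 = \frac{(a-b)(2ab-b^2+\lambda)}{a+b}$. Differentiating this in $s$, using $(k/h^2)' = -2b\,k/h^2$ (since $b = h'/h$), and substituting $a' = -a^2+b^2+b'$ with $b'$ from (\ref{c120}) expressed via $k/h^2$ again, produces precisely one polynomial relation in $a,b,\lambda$ after clearing the denominator $(a+b)^2$. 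That relation, once the spurious $(a+b)$-power is stripped, should be exactly $b(\lambda+3ab)(\lambda-2a^2+ab) = 0$ up to a nonzero constant.

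\textbf{Main obstacle.} The substantive difficulty is purely computational bookkeeping: one must carry $a,b,\lambda$ and the two auxiliary quantities $a'$ (equivalently $b'$) and $k/h^2$ through several substitutions and correctly identify and cancel the spurious factors (a power of $a-b$, which is legitimate to cancel by the hypothesis $\lambda_2\neq\lambda_3$, and a power of $a+b$, legitimate by Lemma \ref{bb6}) before reading off the claimed factorization. The conceptual content is light — everything needed (the soliton equations (\ref{soliton00})–(\ref{solitonjj}), the Codazzi-derived identities (\ref{c120}), (\ref{c112}), the kinematic relation $b' = a' + a^2 - b^2$, and the analyticity dichotomy) is already in hand — so the risk is arithmetic error rather than a missing idea. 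I would organize the computation by first eliminating $f'$ entirely (obtaining it from the $(a+b)f'$ identity), then eliminating $a'$ via (\ref{soliton00}) rewritten through (\ref{solitonii}), then eliminating $k/h^2$ via (\ref{c112}), leaving a single polynomial to factor.
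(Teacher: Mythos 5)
Your second (``alternative'') route is essentially the paper's own proof, and it works. The paper takes the logarithm of (\ref{c112}), differentiates in $s$ using $(k/h^2)'=-2b\,k/h^2$ and $b'=a'+a^2-b^2$, and obtains (\ref{c119}), namely $-a'(a^2+b^2-ab-\lambda)=(a^2-b^2)(a^2-2ab-\lambda)$; it then eliminates $a'$ by means of $-(a+b)a'=a^3+2ab^2+\lambda b$ (equation (\ref{c104}), obtained from $b\times(\ref{c102})+a\times(\ref{c101})$), and the resulting polynomial identity is exactly $b(\lambda+3ab)(\lambda-2a^2+ab)=0$, with no spurious factor of $a+b$ to strip, contrary to what you anticipate. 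The expression for $a'$ that you propose to substitute (from (\ref{c120}) and (\ref{c112})) coincides with the one given by (\ref{c104}), so this route reproduces the paper's elimination verbatim.

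Your primary route, however, as explicitly carried out in your second paragraph, is circular and terminates in $0=0$. Indeed, $(a+b)\times(\ref{2356})-(\ref{c112})$ gives $(a+b)f'=(a+b)b+2ab-b^2+\lambda=3ab+\lambda$; feeding this into (\ref{solitonii}) gives $(a+b)a'=-(a^3+2ab^2+\lambda b)$, which is precisely (\ref{c104}). But the second expression for $a'$ you then invoke --- from (\ref{c120}) together with $b'=a'+a^2-b^2$, i.e. $2a'=k/h^2-b^2-\lambda-2a^2$, with $k/h^2=(a-b)(2ab-b^2+\lambda)/(a+b)$ substituted from (\ref{c112}) --- simplifies to exactly the same equation: one checks directly that $\tfrac12(a-b)(2ab-b^2+\lambda)-(a+b)\bigl(\tfrac12 b^2+\tfrac12\lambda+a^2\bigr)=-a^3-2ab^2-\lambda b$. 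So the two relations you intend to play off against each other are identical, and the elimination yields a tautology rather than (\ref{c113}). The underlying reason is that (\ref{c120}) is not independent of the other inputs you use: it was itself obtained by differentiating (\ref{2356}), and modulo (\ref{c112}), (\ref{c104}) and the kinematic relation it carries no new information. The genuinely new ingredient the lemma needs is the derivative of (\ref{c112}) itself --- the paper's (\ref{c119}) --- which your first route omits and your second route supplies.
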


\begin{proof} We start from (\ref{c112}). Our hypothesis $k \neq 0$ and Lemma \ref{bb6} implies that  $ 2ab  -b^2+\lambda $ does not vanish.
So, we may take the natural log of (\ref{c112}) and differentiate it;

 \begin{equation}
-a-b + \frac{2a^{'}b+ 2ab^{'} - 2bb^{'}}{2ab - b^2 + \lambda}   = \frac{a^{'} + b^{'}}{a+b} -2b. \nonumber
\end{equation}

Then put $b^{'} = a^{'} +a^2 - b^2 $ into it;
  \begin{eqnarray*}
\frac{   a a^{'}+ (a - b )(a^2 -b^2)  }{2ab - b^2 + \lambda}= \frac{a^{'}+a^2 -b^2}{a+b}.
\end{eqnarray*}

Arranging terms, we obtain;

\begin{equation} \label{c119}
-a^{'} {(a^2+ b^2 -ab -\lambda) }  =   ( a^2-b^2)(a^2 -2ab -\lambda  ).
\end{equation}

\noindent Meanwhile, using that $a-b \neq 0$, from $b \times (\ref{c102}) + a \times (\ref{c101})=0$ we have

\begin{equation} \label{c104}
-(a+b) a^{'}  =  a^3   +2ab^2  +\lambda b.
\end{equation}

\noindent Removing $a^{'}$ in (\ref{c119}) and (\ref{c104}) and simplifying, we can get;
\begin{equation}
b(\lambda + 3ab )(\lambda -2a^2 +ab  )=0. \nonumber
\end{equation}
\end{proof}

We need to characterize the two equalities appearing in (\ref{c113}):   $\lambda + 3ab=0  $ and $\lambda -2a^2 +ab=0$.

\begin{lemma} \label{bb1a}
For the soliton metric $g$ of (\ref{mtr1})  with harmonic Weyl curvature and with the adapted frame fields (\ref{mtr11}),  assume that $k \neq 0$ and that  $h$ is not constant. Then $\lambda + 3ab$ does not vanish on $V$.
\end{lemma}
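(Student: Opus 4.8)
The plan is to argue by contradiction. Suppose $\lambda+3ab$ vanishes at some point of $V$. Since the three factors $b$, $\lambda+3ab$, $\lambda-2a^2+ab$ of the identity~(\ref{c113}) are all real analytic on the connected set $V$ (Remark~\ref{realan}), and since $b=h'/h$ is not identically zero under the hypothesis that $h$ is nonconstant, the zero-product principle for analytic functions forces one of $\lambda+3ab$, $\lambda-2a^2+ab$ to vanish identically on $V$. So it will be enough to rule out the possibility $\lambda+3ab\equiv 0$ on $V$; the stronger pointwise non-vanishing statement then follows, because in that case~(\ref{c113}) forces $\lambda-2a^2+ab\equiv 0$ on $V$, and at a hypothetical zero $p$ of $\lambda+3ab$ the two relations combine (subtract) to give $a(a+b)=0$ at $p$, which is impossible since $a+b$ never vanishes (Lemma~\ref{bb6}) and $a$ never vanishes (Lemma~\ref{bb1}).

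So I would assume $\lambda+3ab\equiv 0$ on $V$ and deduce $f'\equiv 0$. First, differentiating the relation $ab=-\lambda/3$ gives $a'b+ab'=0$; feeding in the standing relation $b'=a'+a^2-b^2$ (equivalently $a'+a^2=b'+b^2$, from~(\ref{ricci34})) and using $a+b\neq 0$ from Lemma~\ref{bb6}, one solves for the derivatives and obtains the clean first-order expressions $a'=-a(a-b)$ and $b'=b(a-b)$. The decisive step is to insert these into the right identity: substituting $a'=-a(a-b)$ into the harmonic-Weyl identity~(\ref{c101}) collapses its left-hand side to $b^2(b-a)$, so $b^2(b-a)=b\,k/h^2$, hence $k/h^2=b(b-a)$ on the open dense set $\{b\neq 0\}$ and thus, by analyticity, wherever it makes sense. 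Plugging this into~(\ref{2356}) gives $(a-b)f'=b(a-b)+k/h^2=0$, and since $\lambda_2\neq\lambda_3$ forces $a\neq b$, we get $f'\equiv 0$ on $V$. But then $f$ is locally constant, the soliton equation degenerates to $Rc=\lambda g$, all the $\lambda_i$ coincide, and this contradicts $\lambda_2\neq\lambda_3$.

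I expect the difficulty to be bookkeeping rather than conceptual: once $\lambda+3ab\equiv 0$ is imposed, most of the identities already available in this section — (\ref{c102}), (\ref{c104}), (\ref{c119}), (\ref{c120}) — become tautologies after the substitution $a'=-a(a-b)$ and so yield nothing, and the challenge is to recognize that the pairing of~(\ref{c101}) with~(\ref{2356}) is the one still carrying nontrivial content, namely the vanishing of $f'$. A secondary technical point, handled throughout by the real-analyticity remark, is upgrading relations valid only on $\{b\neq 0\}$ to relations valid on all of $V$.
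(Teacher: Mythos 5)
Your proof is correct and follows essentially the same route as the paper's: differentiate $ab=-\lambda/3$ and use $a+b\neq 0$ to get $a'=-a(a-b)$, $b'+b^2=ab$, deduce $k/h^2=b(b-a)$ (the paper extracts this same identity from (\ref{c120}) rather than (\ref{c101})), and feed it back into the soliton equations to force $f'=0$, contradicting $\nabla f\neq 0$ on $V$. The only substantive difference is your extra care in upgrading the conclusion from ``not identically zero'' to pointwise nonvanishing via (\ref{c113}) and analyticity, which the paper leaves implicit.
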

\begin{proof}
If $\lambda + 3ab=0$ vanishes, we have

 $0=a^{'}b + ab^{'}=(b^{'} +b^2 - a^2)b+ ab^{'}  =   (a+b)  (b^{'}+b^2 -ab ).$
By Lemma \ref{bb6},
we have $b^{'}+b^2 =ab =-\frac{\lambda}{3} .$
Due to (\ref{c120}),
$b^2 - \frac{k}{h^2} =-\frac{\lambda}{3}  $.
From (\ref{solitonjj}),
$f^{'} b = - \frac{\lambda}{3} - \frac{\lambda}{3}  - \frac{\lambda}{3} +   \lambda=0. $ As $h$ is not constant, we have $f^{'}=0$, a contradiction.
\end{proof}

We study the equation $\lambda -2a^2 +ab =0$;

\begin{lemma} \label{bb1b}
For the soliton metric $g$ of (\ref{mtr1})   with harmonic Weyl curvature and with the adapted frame fields (\ref{mtr11}), assume that $k \neq 0$ and that  $h$ is not constant.  Then $\lambda -2a^2 +ab$  does not vanish on $V$.

\end{lemma}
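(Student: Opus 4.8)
The plan is to argue by contradiction. Suppose $\lambda - 2a^2 + ab \equiv 0$ on $V$, i.e. $ab = 2a^2 - \lambda$ (by the real analyticity recorded in Remark \ref{realan} and connectedness of $V$, it is enough to exclude identical vanishing). I will combine this relation with (\ref{c104}), with (\ref{c112}), and with the identity $b' = a' + a^2 - b^2$ (which is $R_{1221}=R_{1331}$ from Lemma \ref{112typeb1}) to force $g$ to be too degenerate.

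First I would substitute $\lambda = 2a^2 - ab$ into the right-hand side of (\ref{c104}). A one-line computation gives $a^3 + 2ab^2 + \lambda b = a(a+b)^2$, so (\ref{c104}) collapses to $(a+b)a' = -a(a+b)^2$; since $a+b$ is not identically zero by Lemma \ref{bb6}, this yields the clean first-order relation $a' = -a(a+b)$.

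Next I differentiate $ab = 2a^2 - \lambda$ and eliminate $b'$ via $b' = a' + a^2 - b^2$, obtaining $a'(3a-b) = a(a-b)(a+b)$. Inserting $a' = -a(a+b)$ and cancelling $a(a+b)$ (not identically zero, by Lemma \ref{bb1} and Lemma \ref{bb6}) leaves $-(3a-b) = a-b$, hence $b = 2a$, and therefore $\lambda = 2a^2 - ab = 0$. Finally I feed $b = 2a$ and $\lambda = 0$ into (\ref{c112}): its left side $(a-b)(2ab - b^2 + \lambda)$ becomes $(-a)(4a^2 - 4a^2) = 0$, while its right side equals $(a+b)\,k/h^2 = 3a\,k/h^2$, which is nonzero since $a \neq 0$ (Lemma \ref{bb1}), $k \neq 0$ (hypothesis), and $h > 0$. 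This contradiction finishes the proof.

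The computations here are all short. The only points requiring care are the algebraic identity $a^3 + 2ab^2 + \lambda b = a(a+b)^2$ that makes (\ref{c104}) factor so cleanly, and the observation that (\ref{c104}) and (\ref{c112}) were derived solely from the soliton equations and $\delta W = 0$, hence remain available here without invoking the hypothesis $k\neq 0$ from which they were stated.
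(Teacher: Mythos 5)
Your proof is correct, and it shares its opening move with the paper's — both substitute $\lambda=2a^2-ab$ into (\ref{c104}) and use the factorization $a^3+2ab^2+\lambda b=a(a+b)^2$ together with Lemma \ref{bb6} to obtain $a'=-a(a+b)$ — but the two arguments then diverge. The paper integrates: from $a'+a^2+ab=0$ it gets $p'h=c_1$, and from $\tfrac{h''}{h}=\tfrac{p''}{p}$ it gets $h'p=c_2$, hence $b=ca$ for a constant $c$, and then splits into the cases $c\neq 2$ (forcing $a$ constant and $a+b=0$, excluded by Lemma \ref{bb6}) and $c=2$ (forcing $b=2a$, $\lambda=0$, and then $k=0$ via the soliton equations (\ref{solitonii})--(\ref{solitonjj})). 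You instead differentiate the constraint $ab=2a^2-\lambda$, eliminate $b'$ with the identity $b'=a'+a^2-b^2$ from Lemma \ref{112typeb1}, and land directly on $a(a+b)(4a-2b)\equiv 0$, whence $b=2a$ and $\lambda=0$ by real analyticity (Remark \ref{realan}) without any integration or case analysis; your final contradiction comes from (\ref{c112}) rather than (\ref{solitonjj}), since its left side vanishes while its right side is $3ak/h^2\neq 0$. Your route is a bit shorter and avoids introducing the integration constants $c_1,c_2$ and the dichotomy on $c$; it also happens not to need the hypothesis that $h$ is non-constant (if $h$ were constant, $b=2a$ would force $a\equiv 0$, again contradicting the non-vanishing of $a$). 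Both proofs are sound; yours trades the paper's ODE integration for one more differentiation of the algebraic constraint.
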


\begin{proof}
If $\lambda -2a^2 +ab =0$ on $V$,  put $\lambda =2a^2 -ab$ into (\ref{c104}) to get;

 $-a^{'}  =  \frac{a^3  +2a^2b +ab^2   }{  a+b}= a(a+b)$. So,  $a^{'} +a^2+  ab=0 $, i.e. $ p^{''}h + p^{'}h^{'} =0$.
 Integrating this, we get $p^{'} h =c_1$ for a constant $c_1$.
  As $ \frac{h^{''}}{h} = \frac{p^{''}}{p}$, we have  $ h^{''}p+p^{'}h^{'} =0$, which integrates to
 $h^{'} p =c_2$  for a constant $c_2$. As $a$ does not vanish by Lemma \ref{bb1} and $b \neq 0$ from hypothesis, $c_1c_2$ is not zero.
So $ \frac{h^{'}}{h} = \frac{c_2}{c_1} \frac{p^{'}}{p} $, i.e. $b= ca$, for $c \neq 0$. So,  $0=\lambda -2a^2 +ab = \lambda + (c-2)a^2 $.

\smallskip
If $ c \neq 2$, then $a$ is a nonzero constant.  $a^{'} +a^2+  ab=0 $ yields $a+b = 0$, which is not possible by Lemma \ref{bb6}.

\smallskip
If $c=2$, then $\lambda=0$ and
$ 2a =  b  $. Put these and $a^{'} +a^2+  ab=0 $ into (\ref{solitonii}) to get $f^{'} =2a$.
Then from (\ref{solitonjj}),  we get $k=0$, a contradiction.

\end{proof}

 \begin{lemma} \label{bb9c}
 For the soliton metric $g$ of (\ref{mtr1})  with harmonic Weyl curvature and with the adapted frame fields (\ref{mtr11}),
  assume that $k=0$.

Then $g$ is locally isometric to the metric $ds^2 + s^{\frac{2}{3}} dt^2+  s^{\frac{4}{3}} \tilde{g}$ on a domain of $\mathbb{R}^4$,
  where $ \tilde{g}$ is flat. Also,  $\lambda=0$ and $f=\frac{2}{3} \ln s + C_2$, for a constant $C_2$.

Furthermore, the Ricci curvature components and scalar curvature of $g$ are as follows;
 $R_{11} = \frac{2}{3s^2}$,  $R_{22} = -\frac{2}{9s^2}$, $R_{33}=R_{44} = -\frac{4}{9s^2}$, $R_{ij} =0$, $i \neq j$, and $R= -\frac{4}{9s^2} $. And the Weyl curvature of $g$ is not zero.

 \end{lemma}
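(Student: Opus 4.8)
The plan is to convert the structure and soliton equations of the previous sections into an explicit first-order ODE system for $a:=\zeta_2=p'/p$ and $b:=\zeta_3=h'/h$, integrate it, and then read off $g$, $f$ and the curvature. Throughout I use $a\neq b$ (from $\lambda_2\neq\lambda_3$) and $a\neq 0$ (Lemma~\ref{bb1}); moreover $b\neq 0$, since if $b\equiv 0$ on $V$ then by Lemma~\ref{bb1} one is forced into the product situation with $k=\lambda h_0^2$ and $\lambda\neq 0$, contradicting $k=0$.

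First I would specialize the identities of Section~6 to $k=0$. Then (\ref{c112}) becomes $(a-b)(2ab-b^2+\lambda)=0$, so $\lambda=b^2-2ab$; (\ref{c120}) becomes $2b'+3b^2+\lambda=0$, which together with $\lambda=b^2-2ab$ gives $b'=ab-2b^2$; and $p''/p=h''/h$, i.e.\ $a'+a^2=b'+b^2$ (Lemma~\ref{112typeb1}), then gives $a'=ab-b^2-a^2$. The step I expect to be the real obstacle is that these three relations, though mutually consistent, do not determine $a$ and $b$ by themselves --- indeed one checks that (\ref{c102}) and (\ref{c104}) reduce to identities under them --- so one must still exploit that $\lambda$ is a \emph{constant}. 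Differentiating $\lambda=b^2-2ab$ yields $a'b+ab'-bb'=0$; substituting the expressions just found for $a'$ and $b'$ and cancelling the nonzero factor $b$ collapses this, after a short computation, to $b^2(b-2a)=0$, whence $b=2a$. Consequently $\lambda=b^2-2ab=0$, $b'=-6a^2$, and since $b=2a$ also $a'=-3a^2$.

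Next I would integrate. From $a'=-3a^2$, with $a$ a nonvanishing real-analytic function (Remark~\ref{realan}), we get $(1/a)'=3$, so after a translation of the $s$-coordinate $a=\tfrac{1}{3s}$ and $b=\tfrac{2}{3s}$. Since $a=(\ln p)'$ and $b=(\ln h)'$, this gives $p=C_p s^{1/3}$ and $h=C_h s^{2/3}$; absorbing $C_p$ into $t$ and $C_h^2$ into $\tilde g$ (which remains flat, as $k=0$) puts $g$ in the stated form $ds^2+s^{2/3}dt^2+s^{4/3}\tilde g$ on a domain of $\mathbb{R}^4\setminus\{s=0\}$. For the potential, (\ref{solitonii}) reads $f'a=a'+a^2+2ab+\lambda=2a^2$, so $f'=2a=\tfrac{2}{3s}$ and $f=\tfrac23\ln s+C_2$ (one checks the remaining components of the soliton equation are then automatically satisfied). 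Finally, substituting $a=\tfrac1{3s}$, $b=2a$, $a'=-3a^2$, $b'=-6a^2$, $\lambda=k=0$ into the curvature formulas of Lemma~\ref{112typeb1} yields $R_{11}=6a^2=\tfrac{2}{3s^2}$, $R_{22}=-2a^2=-\tfrac{2}{9s^2}$, $R_{33}=R_{44}=-b'-2b^2-ab=-4a^2=-\tfrac{4}{9s^2}$, $R_{ij}=0$ for $i\neq j$, and $R=-4a^2=-\tfrac{4}{9s^2}$.

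It remains to show $W\not\equiv0$, which I would do by contradiction: if $g$ were locally conformally flat, then in the orthonormal frame diagonalizing the Ricci tensor the sectional curvature is forced by the Ricci data, $R_{1221}=\tfrac12(R_{11}+R_{22})-\tfrac{R}{6}$, whose right side equals $\tfrac{8}{3}a^2$; but Lemma~\ref{112typeb1} gives $R_{1221}=-a'-a^2=2a^2$, and $2a^2\neq\tfrac83a^2$ since $a\neq0$. Hence the Weyl tensor of $g$ cannot vanish.
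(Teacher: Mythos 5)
Your proof is correct, and while it follows the same overall strategy as the paper --- reduce everything to an ODE system for $a=\zeta_2$ and $b=\zeta_3$, force $b=2a$ and $\lambda=0$, and integrate $a'=-3a^2$ --- the way you arrive at $b=2a$ is genuinely different. The paper, after extracting $2ab-b^2+\lambda=0$ from (\ref{c112}), returns to the Codazzi component (\ref{solitonii81}), computes $R$, and obtains the factored identity $(2a-b)(a'+a^2-ab)=0$; it must then spend a separate argument excluding the branch $a'+a^2-ab=0$ (which forces $ab$ and hence $b$ constant, leading to $a=b$). You instead combine (\ref{c120}) (already available from Section~6) with $\lambda=b^2-2ab$ to get $b'=ab-2b^2$ and $a'=ab-b^2-a^2$, correctly observe that these relations alone do not close the system, and then exploit the constancy of $\lambda$: differentiating $\lambda=b^2-2ab$ collapses to $b^2(b-2a)=0$, and $b\not\equiv 0$ (which you rightly deduce from the $k=\lambda h_0^2\neq 0$ conclusion of the $b\equiv 0$ lemma) gives $b=2a$ outright by real analyticity. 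This is a slightly more economical route since it avoids the extra branch entirely; the paper's route has the merit of not needing (\ref{c120}) and of exhibiting explicitly which consequence of $\delta W=0$ is being used at this stage. Your explicit verification that $W\neq 0$ via the locally-conformally-flat identity $R_{1221}=\tfrac12(R_{11}+R_{22})-\tfrac{R}{6}$ is also a welcome addition, as the paper merely asserts this fact.
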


 \begin{proof}  As $k=0$ and $a \neq b$,   $2ab -b^2 + \lambda=0$ from (\ref{c112}).
From the computation in Lemma \ref{112typeb1}, we get $ \  R =  -6 ( a^{'} + a^2)  -   8ab-2  \lambda $.
(\ref{solitonii81}) becomes;
\begin{eqnarray*}
0 & = -\{ a^{'} +a^2   +  2 ab  \}^{'}  -
a \{a^{'} + a^2  +   2ab  \} +3a ( a^{'} + a^2) \hspace{3.3cm}   \nonumber \\
&  - \frac{ 1}{6} \{  -6 ( a^{'} + a^2)  -   8ab-2  \lambda \}^{'}  \hspace{6.3cm} \nonumber \\
&= -\frac{2}{3} (ab)^{'} + 2a( a^{'}+ a^2-a b) \hspace{7.2cm} \nonumber \\
&= -\frac{2}{3} \{ a^{'}b + a(a^{'} + a^2 -b^2) \} + 2a( a^{'}+ a^2-a b)  \hspace{4.2cm} \nonumber \\
&= -\frac{2}{3} a^{'}b + \frac{4}{3} aa^{'} +  \frac{4}{3}  a^3  +  \frac{2}{3} ab^2 -2 a^2 b.  \hspace{5.9cm}
\end{eqnarray*}

We get;
\begin{eqnarray*}
(2a-b) (a^{'} + a^2 -ab)=0.
\end{eqnarray*}
If $a^{'}+ a^2 -ab=0$, we get  $p^{''} =  \frac{p^{'} h^{'}}{h}$. Then $\frac{p^{'}}{h}=c_1$, a constant.    From $ \frac{ h^{''}}{h}=\frac{p^{''}}{p}=  \frac{p^{'} h^{'}}{ph}$, we also get $\frac{h^{'}}{p}=c_2 $, a constant.  So, $ab =\frac{p^{'}h^{'}}{ph} =c_1c_2  $.  And $2ab - b^2 + \lambda=0$ tells that $b$ is a constant. If $b=0$, then $\lambda=k=0$ and from (\ref{solitonii}) $f^{'}a=0$. So, $f^{'}=0$ and $g$ is Einstein, a contradiction to the hypothesis.
Now $b$ is a nonzero constant. Then  $b^{'} + b^2 =a^{'}+ a^2=ab$ gives  $a=b$, a contradiction to the hypothesis.

\medskip

If $2a=b$, then  $0=   2ab - b^2 + \lambda = \lambda $.
From $a^{'} +a^2 = b^{'} +b^2 = 2a^{'} + 4a^2$, we get  $a^{'} +3a^2 =0$. Integrating it to get $a=\frac{p^{'}}{p}= \frac{1}{3s -c_2}$ for a constant $c_2$.
 (\ref{solitonii}) gives
$f^{'} a = 2a^2 $, so that $f^{'} = 2a=\frac{2}{3s -c_2} $.
As $2 \frac{p^{'}}{p} = \frac{ h^{'}}{h} $, we have $p^2 = e^c h$ for a constant $c$.
We get $p = e^{c_3} (3s- c_2)^{\frac{1}{3}}$ and $h= e^{c_4} (3s- c_2)^{\frac{2}{3}}$.

So,  $g$ is locally isometric to the metric
 $ds^2 + s^{\frac{2}{3}} dt^2+  s^{\frac{4}{3}} \tilde{g}$ on a domain of $\mathbb{R}^4$, where $ \tilde{g}$ is flat. And $f=\frac{2}{3} \ln s + C_2$, for a constant $C_2$.

  \medskip
  One can check that the above $(g,f)$ satisfy the soliton equation including (\ref{soliton00}), (\ref{solitonii}), (\ref{solitonjj}) and the harmonicity of Weyl curvature, and so is a steady Ricci soliton.
One can easily compute the curvature components of $g$.

 \end{proof}

Based on the real analyticity of $a,b,$ $ a^{'}$ and $b^{'}$ from Remark \ref{realan}, we combine the previous lemmas to obtain the next proposition.

\begin{prop} \label{prop2}
Let $(M, g, f)$ be a four dimensional gradient Ricci soliton with harmonic Weyl curvature.
Suppose that  $ \lambda_2  \neq \lambda_3=  \lambda_4$ for an adapted frame fields $E_j$, $j=1,2,3,4$,  in an open subset $U$ of $M_{\mathcal{A}} \cap \{ \nabla f \neq 0  \}$.

\smallskip
Then for each point $p_0$ in $U$, there exists a neighborhood $V$ of $p_0$ in $U$ with coordinates $(s,t, x_3, x_4)$  in which
$(V, g,f)$ can be one of the following;

\medskip
{\rm (i)} $(V, g)$ is isometric to a domain in $ \mathbb{R}^2 \times N$ with $g= ds^2 + s^2 dt^2+  \tilde{g} $, where $(N, \tilde{g})$ is a Riemannian manifold of constant curvature $\lambda \neq 0$.
 And $f = \frac{\lambda}{2} s^2+C_1$, for a constant $C_1$.

{\rm (ii)} $(V, g)$ is  isometric to a domain in $\mathbb{R}^4$ with the Riemannian metric
 $ds^2 + s^{\frac{2}{3}} dt^2+ s^{\frac{4}{3}} \tilde{g}$, where $ \tilde{g}$ is flat. Also,  $\lambda=0$ and $f=\frac{2}{3} \ln s+C_2$, for a constant $C_2$. The metric $g$ is not locally conformally flat.
\end{prop}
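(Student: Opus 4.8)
The plan is to assemble the preceding lemmas into an exhaustive case analysis, using the real analyticity recorded in Remark~\ref{realan} to upgrade pointwise algebraic identities to statements valid on an entire connected neighborhood. Fix $p_0 \in U$. First I would use Lemma~\ref{claim112b} to choose a connected coordinate neighborhood $V \ni p_0$ inside $U$ on which $g = ds^2 + p(s)^2\,dt^2 + h(s)^2\,\tilde{g}$ with the adapted frame field~(\ref{mtr11}), and Lemma~\ref{112typee} to arrange that $\tilde{g}$ has constant curvature $k$. Writing $a = \zeta_2 = p'/p$ and $b = \zeta_3 = h'/h$, the functions $a$, $b$, $\lambda + 3ab$ and $\lambda - 2a^2 + ab$ are all real analytic on $V$; recall also that $a$ is nowhere zero on $V$ (by the lemma ruling out $a \equiv 0$) and that $a + b$ is nowhere zero on $V$ (Lemma~\ref{bb6}). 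The argument then branches on the value of $k$.

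If $k = 0$, Lemma~\ref{bb9c} applies directly and delivers conclusion~(ii) verbatim, including the curvature formulas and the fact that the Weyl tensor is nonzero --- which in dimension four is precisely the statement that $g$ is not locally conformally flat. If $k \neq 0$, I would first treat the subcase that $h$ is constant, i.e. $b \equiv 0$ on $V$; then Lemma~\ref{bb1} applies and, after shifting and rescaling $s$ and $t$, yields conclusion~(i) with $f = \frac{\lambda}{2}s^2 + C_1$. It then remains to show that $k \neq 0$ together with $h$ nonconstant is impossible. For this I would invoke Lemma~\ref{bb8} to get the identity $b(\lambda + 3ab)(\lambda - 2a^2 + ab) \equiv 0$ on $V$; since the three factors are real analytic and $V$ is connected, applying the two-factor principle of Remark~\ref{realan} twice shows one of them vanishes identically on $V$. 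The factor $b$ cannot, because $h$ is nonconstant; and Lemmas~\ref{bb1a} and~\ref{bb1b} (whose hypotheses $k \neq 0$ and $h$ nonconstant are exactly in force) say that $\lambda + 3ab$ and $\lambda - 2a^2 + ab$ are each nowhere zero on $V$. This contradiction empties the subcase, so $k \neq 0$ forces $h$ to be constant and we land in conclusion~(i). Since $k = 0$ and $k \neq 0$ exhaust all cases, and the subcases within each are exhaustive, this would complete the proof.

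Essentially all of the analytic work --- the ODE reductions such as~(\ref{c113}), the curvature computations, and the treatment of the linear degeneracies $a \equiv 0$, $b \equiv 0$, $a + b \equiv 0$ --- is already packaged in the preceding lemmas, so the only real obstacle at the level of the proposition is bookkeeping: one must check that the dichotomies ``$k = 0$ versus $k \neq 0$'' and ``$b \equiv 0$ versus $b \not\equiv 0$'' are genuinely exhaustive on the connected $V$ (true because $h$, hence $b$, is real analytic there), and that the ``a product of real analytic functions vanishing identically forces a factor to vanish identically'' step is legitimate on a connected open set. A minor point deserving attention is that Lemmas~\ref{bb1a} and~\ref{bb1b} are phrased as ``does not vanish on $V$'' (nowhere zero), which is more than enough to contradict the identity of Lemma~\ref{bb8}; I would also make sure no shrinking of $V$ beyond that of Lemma~\ref{claim112b} is required, so that $p_0$ stays in the neighborhood throughout.
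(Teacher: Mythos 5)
Your proposal is correct and follows essentially the same route as the paper: the $k=0$ case is settled by Lemma~\ref{bb9c}, and for $k\neq 0$ the identity of Lemma~\ref{bb8} combined with the real-analyticity principle of Remark~\ref{realan} reduces everything to the three factors, handled by Lemma~\ref{bb1} (the $b\equiv 0$ case, giving type (i)) and Lemmas~\ref{bb1a}, \ref{bb1b} (which rule out the other two factors). Your reorganization --- splitting first on whether $h$ is constant and then deriving a contradiction in the nonconstant subcase --- is only a cosmetic rearrangement of the paper's argument, and your attention to the exhaustiveness of the dichotomies and to the ``vanishes identically'' reading of the nonvanishing lemmas is exactly the bookkeeping the paper leaves implicit.
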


\begin{proof} We exploit the real analyticity. Lemma \ref{bb9c} settles the $k=0$ case.
Lemma \ref{bb8} divides the $k \neq 0$ case into three subcases $b=0$, $\lambda + 3ab=0$ and $\lambda -2a^2 +ab=0$ which are
treated in  Lemmas \ref{bb1}, \ref{bb1a} and  \ref{bb1b}, respectively.

\end{proof}

\section{4-dimensional soliton with $\lambda_2 = \lambda_3 =\lambda_4$.}

In this section we treat the remaining case of $\lambda_2 = \lambda_3 =\lambda_4$  for an adapted frame field.

 \begin{prop} \label{lcf1}
Suppose that $(M, g,f)$ is  a four dimensional gradient Ricci soliton with harmonic Weyl curvature and non constant $f$ and that $\lambda_2 = \lambda_3 =\lambda_4 \neq \lambda_1$ for an adapted frame field in an open subset $U$ of $M_{\mathcal{A}} \cap \{ \nabla f \neq 0  \}$.

\smallskip
Then for each point $p_0$ in $U$, there exists a neighborhood $V$ of $p_0$ in $U$ where  $g$ is a warped product;
\begin{equation} \label{metr}
g= ds^2 +    h(s)^2 \tilde{g},
\end{equation}
 for a positive function $h$,
  where the Riemannian metric $\tilde{g}$ has constant curvature, say $k$.  In particular, $g$ is locally conformally flat.
 \end{prop}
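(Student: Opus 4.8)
The plan is to exploit the two-eigenspace structure of the Codazzi tensor $\mathcal{A}=Rc-\frac{R}{6}g$ to put $g$ into warped-product form, and then to observe that the triple coincidence $\lambda_2=\lambda_3=\lambda_4$ forces the base metric to be Einstein — which in dimension three is the same as constant curvature.

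First I would record the frame picture. Since $\lambda_1\neq\lambda_2=\lambda_3=\lambda_4$, the Codazzi tensor $\mathcal{A}$ has on $V$ exactly two eigenspace distributions: the geodesic line field $D_1=\operatorname{span}\{E_1\}$ with $E_1=\nabla f/|\nabla f|=\partial_s$ (by Lemma \ref{threesolb}), and the rank-three distribution $D_2=\operatorname{span}\{E_2,E_3,E_4\}=E_1^{\perp}$, which is tangent to the regular level sets $\Sigma_c$ of $f$. By Lemma \ref{derdlem}(ii), $D_2$ is integrable with totally umbilic leaves, and these leaves are the level sets $\Sigma_c$. Lemma \ref{threesolb}(v) already supplies coordinates $(s,x_2,x_3,x_4)$ in which $g=ds^2+\sum_{i,j>1}g_{ij}(s,x_2,x_3,x_4)\,dx_i\,dx_j$. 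The umbilicity factor of $\Sigma_s$ relative to $E_1$ is computed by evaluating $-\langle\nabla_u u,E_1\rangle=\zeta\,g(u,u)$ at $u=E_2$, which by Lemma \ref{threesolb01} gives $\zeta=\zeta_2=\frac{1}{|\nabla f|}(\lambda-\lambda_2)$; by Lemma \ref{raas} this depends on $s$ only. The same Christoffel-symbol computation as in the proof of Lemma \ref{claim112b} then yields $\tfrac12\partial_s g_{ij}=\zeta(s)\,g_{ij}$, so integrating in $s$ gives $g_{ij}(s,x)=h(s)^2\tilde g_{ij}(x)$ with $h(s)=\exp\!\big(\int_{s_0}^{s}\zeta\big)>0$ and $\tilde g_{ij}(x)=g_{ij}(s_0,x)$, i.e. $g=ds^2+h(s)^2\tilde g$ on $V$, with $E_i=\frac1h e_i$ ($i>1$) for a $\tilde g$-orthonormal frame $\{e_i\}$ on the base.

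Next comes the main point: showing $\tilde g$ has constant curvature. For the warped product $g=ds^2+h(s)^2\tilde g$ over the three-dimensional base $(\Sigma,\tilde g)$ the O'Neill curvature identities give, for $i,j>1$,
\[
Rc_g(E_i,E_j)=\frac{1}{h^2}\,Rc_{\tilde g}(e_i,e_j)-\Big(\frac{h''}{h}+2\frac{(h')^2}{h^2}\Big)\delta_{ij}.
\]
On the other hand $E_i$ is a Ricci eigenvector of $g$, so $Rc_g(E_i,E_j)=\lambda_i\delta_{ij}=\lambda_2(s)\,\delta_{ij}$, where $\lambda_2$ is a function of $s$ alone by Lemma \ref{raas}. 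Equating and solving, $Rc_{\tilde g}(e_i,e_j)=h(s)^2\big(\lambda_2(s)+\frac{h''}{h}+2\frac{(h')^2}{h^2}\big)\delta_{ij}$; since the left-hand side is intrinsic to $\tilde g$ and independent of $s$, the scalar function on the right must be a constant $\nu_0$. Hence $Rc_{\tilde g}=\nu_0\,\tilde g$, so the three-manifold $(\Sigma,\tilde g)$ is Einstein and therefore of constant sectional curvature $k:=\nu_0/2$.

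It remains to see that $g=ds^2+h(s)^2\tilde g$ with $\tilde g$ of constant curvature $k$ is locally conformally flat: putting $r=\int ds/h(s)$ rewrites $g=h(r)^2\big(dr^2+\tilde g\big)$, and $dr^2+\tilde g$ is, locally, a metric product of a line with a space form, which is conformally flat (for instance $\mathbb{R}\times\mathbb{S}^3$ is conformal to $\mathbb{R}^4\setminus\{0\}$, and similarly in the flat and hyperbolic cases). Being conformal to a locally conformally flat metric, $g$ is itself locally conformally flat, which finishes the proof. The only step that truly uses the hypothesis of this section is the middle one: it is exactly the equality $\lambda_2=\lambda_3=\lambda_4$, together with their joint dependence on $s$ only, that collapses $Rc_{\tilde g}$ to a multiple of $\tilde g$ in rank three; the warped-product reduction and the conformal-flatness statement are otherwise routine.
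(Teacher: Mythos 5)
Your proof is correct, and the warped-product reduction (umbilicity of the level sets plus integration of $\tfrac12\partial_s g_{ij}=\zeta(s)\,g_{ij}$) is essentially the paper's, up to the cosmetic difference that you read the umbilicity factor off the soliton equation via Lemma \ref{threesolb01} while the paper reads it off Lemma \ref{derdlem}(i). Where you genuinely diverge is the proof that $\tilde g$ has constant curvature. The paper follows Derdzi\'{n}ski's Lemma 4: it writes out the warped-product Ricci components and feeds them back into the Codazzi identity $\nabla_1 R_{ij}-\nabla_i R_{1j}=\frac{R_1}{6}g_{ij}$ coming from $\delta W=0$, extracting $R^{\tilde g}_{ij}=H(s)\,\tilde g_{ij}$. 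You instead use only the O'Neill identity together with the hypothesis $\lambda_2=\lambda_3=\lambda_4$ and Lemma \ref{raas}: since $Rc_g$ restricted to the fibre directions is $\lambda_2(s)\cdot\mathrm{Id}$, the fibre Ricci tensor $Rc_{\tilde g}$ is forced to equal an $s$-dependent multiple of $\tilde g$, hence a constant multiple by $s$-independence of the left-hand side (or by Schur's lemma), and a $3$-dimensional Einstein metric has constant curvature. Your route is more elementary --- it never reinvokes $\delta W=0$ once the frame is set up --- but it leans on the full triple coincidence of eigenvalues; the paper's computation is the one that survives in the weaker setting of Lemma \ref{112typee}, where only $\lambda_3=\lambda_4$ coincide and the same $\delta W=0$ manipulation is what shows the $2$-dimensional fibre metric has constant curvature. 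Your closing observation that $g$ is conformal to $dr^2+\tilde g$ after the substitution $r=\int ds/h$ correctly supplies the local conformal flatness, which the paper asserts without detail.
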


 \begin{proof}
Near $p_0$ in $U$, we use a local
 coordinates system $(x_1 := s, \ x_2, x_3, x_4)$ from Lemma \ref{threesolb} {\rm (v)} in which
 the metric
  $g = ds^2 + \sum_{i, j \geq 2}^{4}g_{ij} dx_i dx_j$ with $g_{ij} =g_{ij}(x_1, \cdots, x_4)$.

 By Lemma \ref{raas}, near $p_0$, each $\lambda_i$, $i = 1,2,3,4$ is a function of $s$ only.
We consider the second fundamental form of the level hypersurfaces $\Sigma_c$ of $f$ with respect to $E_1$;
$H^{E_1}  ( u , u ) =  -  \langle \nabla_{u} u ,  E_1\rangle  $. As $\Sigma_c$ is totally umbilic by Lemma \ref{derdlem} (ii), $H^{E_1} ( u , u ) =    G \cdot g( u , u) $ for any $u$ tangent to $\Sigma_c$ and some function  $G$.
Then, by Lemma \ref{derdlem} (i) $  \langle \nabla_{E_2} E_2 ,  E_1\rangle =    \frac{ \lambda_2^{'} -\frac{1}{6}R^{'} }{\lambda_2  - \lambda_1  }   $
So, $G= - \frac{ \lambda_2^{'} -\frac{1}{6}R^{'} }{\lambda_2  - \lambda_1  }$ is a function of $s$ only.

For $i, j \in \{2, 3,4 \}$,  setting $\partial_i:= \frac{\partial}{\partial x_i}$, we compute,
\begin{eqnarray*}
G(s) \cdot g_{ij}& =H^{E_1} ( \partial_i , \partial_{j} )  =   -  \langle \nabla_{\partial_i}  \partial_{j} ,   \frac{\partial }{\partial s}\rangle =  -  \langle\sum_{k=1}^4 \Gamma^{k}_{i{j}}  \partial_k ,   \frac{\partial }{\partial s}  \rangle \\
& =  - \sum_k \langle  \frac{1}{2} g^{kl}( \partial_i g_{lj} +\partial_{j} g_{li} - \partial_l g_{ij} )\partial_k ,  \frac{\partial }{\partial s} \rangle       = \frac{1}{2} \frac{\partial }{\partial s} g_{i{j}}.
\end{eqnarray*}
 So,  $\frac{1}{2} \frac{\partial }{\partial s} g_{i{j}} =   G(s)  g_{ij}$. Integrating it, we get $ g_{ij} = e^{C_{ij}} w(s)$. Here the function $w(s)$ is independent of $i,j$ and each $C_{ij}$ depends only on $x_2, x_3, x_4$.

 Now  $g$ can be  written as $g= ds^2 +    h(s)^2 \tilde{g} $, where $\tilde{g}$ can be viewed as a Riemannian metric in a domain of $(x_2, x_3, x_4)$-plane.

 \medskip
 To prove that  $\tilde{g}$ has constant curvature, we modify the proof of Derdzi\'{n}ski's Lemma 4  in \cite{De}, which is stated for harmonic curvature case.

 For $i,j \in \{2, 3,4 \}$, we compute the Christoffel symbols and Ricci curvature of $g$;
\begin{eqnarray}
\Gamma_{ij}^1 =  - h h^{'} \tilde{g}_{ij},  \ \ \ \  \ \ \ \  \Gamma_{1j}^i  =  \frac{h^{'}}{h} \delta_{ij}, \hspace{4.7cm} \nonumber  \\
 \ \ R_{1i}=0, \ \ \ \ \ \  R_{11}= -3 \frac{h^{''}}{h},  \ \ \ \ \
R_{ij} =  - \tilde{g}_{ij} ( h h^{''}   +   2 { h^{'}}^2 )  +R^{\tilde{g}}_{ij}.  \label{riccicompob3}
 \end{eqnarray}

The condition $\delta W=0$ gives
$\nabla_k R_{ij}  - \nabla_j R_{ik} = -  \frac{R_j}{6} g_{ki} + \frac{R_k}{6} g_{ij}$.
In particular, for $i, j  \in \{2, 3,4 \}$,  $\nabla_1 R_{ij}  -\nabla_j R_{i1} =  \frac{R_1}{6} g_{ij}$. From (\ref{riccicompob3}),
\begin{eqnarray*}
 \frac{\partial_1 R}{6} h^2 \tilde{g}_{ij}   =& \frac{\partial_1 R}{6} g_{ij}=\nabla_1 R_{ij}  -\nabla_{i} R_{1 j} \hspace{6cm} \\
=&   \partial_1  R_{ij}   -  R ( \nabla_{\partial_{1} }\partial_{j},   \partial_{i})   + R ( \nabla_{\partial_{i} }\partial_{j},   \partial_{1}) \hspace{4.2cm} \\
 =&  \partial_1  R_{ij}   -  \frac{ h^{'} }{h} R ( {\partial_{j}  },   \partial_{i}) - h h^{'} R (   \partial_{1 },   \partial_{1}) \tilde{g}( \partial_{i } , \partial_{j }  ) \hspace{3.3cm} \\
 =&  - \tilde{g}_{ij} \partial_1 (   h^{''} h  +  2 {h^{'}}^2 )  -  \frac{ h^{'} }{h} [  - \tilde{g}_{ij} ( h h^{''}     +2 { h^{'}}^2 )  +R^{\tilde{g }}_{ij} ] - h h^{'} R_{11} \tilde{g}_{ij}.
 \end{eqnarray*}

As $R$ depends only on $s$, so does $\partial_1 R= \frac{\partial R }{\partial s}$.
Therefore we get $R^{\tilde{g }}_{ij}=H(s) \cdot \tilde{g}_{ij}$ for a function $H(s)$ of $s$ only. So, $\tilde{g}$ is a 3-dimensional Einstein metric.
\end{proof}

 For the metric in (\ref{metr}), $h$ and  $f$ satisfy the following equations from $\nabla \nabla f + Rc = \lambda g$;
\begin{eqnarray}
f^{''} - 3\frac{h^{''}}{h}    =   \lambda,  \hspace{2.5cm} \label{fh} \\
\frac{h^{'}}{h} f^{'}  + \frac{2k}{h^2} -  \frac{h^{''}}{h}  - 2\frac{(h^{'})^2}{h^2} =   \lambda.\label{fh2}
\end{eqnarray}

\begin{remark} \label{rk1}
{\rm If all $\lambda_i$'s, $i=1, \cdots, 4$, are equal,  then the metric is Einstein. And if $f$ is not constant, then the conlusion of Proposition  \ref{lcf1} still holds.  In fact, from the section 1 of \cite{CC},
the Einstein metric $g$ becomes locally of the form $g= ds^2 +    (f^{'}(s))^2 \tilde{g}$ where $\tilde{g}$ has constant curvature. Then, the soliton can be seen to be either Gaussian  or a flat metric with  $\nabla d f=0$; see also Proposition 2 of \cite{PW}.}
\end{remark}

\section{Classification of gradient Ricci solitons with harmonic Weyl curvature}

 We are going to combine Proposition \ref{4dformc}, \ref{prop2} and \ref{lcf1} to  prove
Theorem \ref{local} after we settle the next lemma;
\begin{lemma} \label{notwo}
No two of the local four types of solitons {\rm (i)}$ \thicksim${\rm (iv)} in the statement of  Theorem \ref{local} can exist on a connected soliton.
\end{lemma}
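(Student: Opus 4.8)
The plan is to play off three invariants of a connected gradient Ricci soliton $(M,g,f)$ with $\delta W=0$ that are already pinned down by any single one of the four local models: whether $f$ is constant, whether the Weyl tensor $W$ vanishes identically, and the value of the soliton constant $\lambda$. Recall that $(M,g,f)$ is real analytic in harmonic coordinates, so $\nabla f$ and $W$ are real-analytic tensor fields on $M$; hence each of the first two invariants is all-or-nothing on connected $M$: if $\nabla f$ vanishes on a nonempty open set then $\nabla f\equiv 0$ on $M$, and if $W$ vanishes on a nonempty open set then $W\equiv 0$ on $M$. (In type {\rm (iv)} the latter needs no analyticity, since there $W$ already vanishes on an open dense subset.) And $\lambda$ is simply a fixed constant attached to $(M,g,f)$.

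Next I would record these invariants for the four types. Type {\rm (i)}: $f$ constant. Types {\rm (ii)}, {\rm (iii)}, {\rm (iv)}: $f$ nonconstant on the neighborhood in question (it equals $\frac{\lambda}{2}|x|^2$ with $\lambda\neq 0$, equals $\frac{2}{3}\ln s$, and is ``not constant'', respectively). Type {\rm (ii)}: $\lambda\neq 0$; type {\rm (iii)}: $\lambda=0$. Type {\rm (iv)}: $W=0$; type {\rm (ii)}: $W\neq 0$, because for the product $\mathbb{R}^2\times N_\lambda$ with $\lambda\neq 0$ one has $R(E_1,E_3,E_3,E_1)=0$ whenever $E_1$ is tangent to the $\mathbb{R}^2$-factor and $E_3$ to the $N_\lambda$-factor, whereas conformal flatness, together with $Rc=\mathrm{diag}(0,0,\lambda,\lambda)$, would force this sectional curvature to be a nonzero multiple of $\lambda$ through the Schouten tensor; type {\rm (iii)}: $W\neq 0$ by Lemma \ref{bb9c}.

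It then remains to dismiss each of the six pairs. If type {\rm (i)} held on a nonempty open set and one of {\rm (ii)}, {\rm (iii)}, {\rm (iv)} on another, then $f$ would be constant on the first, hence constant on $M$, hence constant on the second, contradicting nonconstancy there. If types {\rm (ii)} and {\rm (iii)} both occurred, then $\lambda\neq 0$ and $\lambda=0$ simultaneously, impossible. If type {\rm (iv)} occurred together with {\rm (ii)} or {\rm (iii)}, then $W\equiv 0$ on $M$, contradicting $W\neq 0$ on the type-{\rm (ii)} or type-{\rm (iii)} region. These three observations exhaust all six pairs and prove the lemma.

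I do not anticipate a genuine obstacle here; the only points deserving a line of care are the elementary verification that $\mathbb{R}^2\times N_\lambda$ fails to be conformally flat when $\lambda\neq 0$, the ``open dense'' phrasing of type {\rm (iv)} (harmless, since one only needs $W$ to vanish on some nonempty open set), and the observation that a type-{\rm (iv)} soliton that happens to be Einstein — the Gaussian soliton written in polar coordinates — is still kept apart from type {\rm (i)} by the nonconstancy of $f$ that is built into the statement of {\rm (iv)}.
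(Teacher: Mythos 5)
Your proof is correct, and its skeleton is the same as the paper's: propagate pointwise invariants over the connected manifold by real analyticity and check that the four types assign incompatible values. Two of your three separations coincide with the paper's exactly (constancy of the real-analytic $f$ rules out mixing type {\rm (i)} with the others; real-analytic continuation of $|W|^2$ rules out mixing type {\rm (iv)} with {\rm (ii)} or {\rm (iii)}), and you add a welcome detail the paper only asserts, namely the verification via the Schouten tensor that $\mathbb{R}^2\times N_\lambda$ with $\lambda\neq 0$ has $W\neq 0$. The one place you genuinely diverge is the pair {\rm (ii)} versus {\rm (iii)}: the paper continues the scalar curvature, noting $R\equiv 2\lambda$ is locally constant for type {\rm (ii)} while $R=-\tfrac{4}{9s^2}$ is not for type {\rm (iii)}, whereas you simply observe that the soliton constant $\lambda$ is a single number attached to $(M,g,f)$ which is nonzero in type {\rm (ii)} and zero in type {\rm (iii)}. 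Your version is more elementary since it needs no analytic continuation at all for that pair; the paper's version has the minor virtue of not depending on how the constant $\lambda$ is normalized in the statement of the two types. Either way the six pairs are exhausted and the lemma follows.
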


\begin{proof}
When the real analytic function $f$ is constant in an open subset, then it is constant on $M$ as $M$ is connected.
So, if a soliton is the type {\rm (i)} in an open subset, it will be so on $M$.

If $g$ is a locally conformally flat metric on an open subset $U$ with non constant $f$, then   $|W|^2=0$  on $U$ and the real analytic function $|W|^2=0$ everywhere on $M$. So, $g$ is locally conformally flat on $M$ and $f$ is nowhere constant on $M$.  The types {\rm (ii)}  and  {\rm (iii)} do not satisfy  $|W|^2=0$.

If $g$ is isometric, on an open subset $V$, to a domain in $ \mathbb{R}^2 \times N_{\lambda}$,  then
$R=2 \lambda$ on $V$ and by real analyticity  $R=2 \lambda $  on $M$.
But if $g$ is isometric, on another open subset $W$, to the metric
 $ds^2 + s^{\frac{2}{3}} dt^2+ s^{\frac{4}{3}} \tilde{g}$, then the scalar curvature $ R= -\frac{4}{9s^2} $ is not locally constant.
 This proves the lemma.
 \end{proof}

\noindent {\bf Proof of Theorem \ref{local}  } Due to Lemma \ref{notwo} we may consider only one type on $M$.
When $f$ is constant, it corresponds to the type {\rm (i)}.

So, suppose that $f$ is not constant. Note that the statement {\rm (iv)} holds by Proposition \ref{lcf1} and Remark \ref{rk1}.
We denote the open dense subset $M_{\mathcal{A}} \cap \{ \nabla f \neq 0  \}$ by $K$.
 If $K=M$, then  the statements for  {\rm (ii)}  and {\rm (iii)} also hold from Proposition \ref{prop2}.

\smallskip
For the rest of proof we assume that there is a point $p_0 \in M \setminus K$.

 \medskip
When $(K, g)$ is of the type {\rm (ii)}, $(K, g)$ is locally isometric to $ \mathbb{R}^2 \times N_{\lambda}$, where the Ricci tensor is parallel.
 As $K$ is dense in $M$, the Ricci tensor is parallel near $p_0$ with eigenvalues $\lambda$ and $0$ of both multiplicity two by continuity. We can decompose the tangent bundle over a neighborhood of $p_0$; $TM = \eta_1 \oplus \eta_2$, where $\eta_1, \eta_2$ are $2$-dimensional parallel distributions with $Rc_{| \eta_1} = \lambda \cdot {\rm Id}$ and $Rc_{| \eta_2} = 0 \cdot {\rm Id}$.
 By de Rham decomposition theorem \cite[Section 8.3.1]{PP},
 $p_0$ has an open ball $B \subset M$ with $p_0$ as the center, where $B$ is isometric to (to be identified with) a ball in $ \mathbb{R}^2 \times N_{\lambda}$.
Now we can just solve for $f$ from the gradient soliton equation $ \nabla d f = -Rc + \lambda g$ to get;    $f= \frac{\lambda s^2}{2} + C$ where $s(\cdot) := d_{\mathbb{R}^2}( p_0,\cdot) $ is the Euclidean distance function from $p_0$. So, a neighborhood of $p_0$ is of type {\rm (ii)}.

\medskip
Suppose that $(K, g)$ is of the type {\rm (iii)}. Let  $\gamma_1: [0,1] \rightarrow M$ be a smooth path with $\gamma_1(0) =p_0 $ and $\gamma_1(1) \in  K$. Let $c \in [0,1)$ be the largest element in $\{ t  \in [0,1) \ | \  \gamma_1(t) \in M \setminus K \}$.  Define $\gamma$ to be the restriction of $\gamma_1$ on $[c,1]$. Set  $p:= \gamma(c)$ which is in $M \setminus K $.
Then $\gamma((c,1]) \subset  K$. Near any point $q \in \gamma((c,1])$, by Proposition \ref{prop2} we have local coordinates neighborhood $B_q \subset K$ with $(s_q, t, x_3, x_4)$ in which $f= \frac{2}{3} \ln (s_q) +C_q$ with the function $s_q$ and  constant $C_q$ depending on $q$. In a neighborhood $B_r \subset K$ of  another point $r \in \gamma((c,1]) $, we have a similar expression of $f= \frac{2}{3} \ln (s_r) +C_r$.
On a possible overlap region $B_q \cap B_r $, $\frac{2}{3} \ln (s_q) +C_q =  \frac{2}{3} \ln (s_r) +C_r$.
By taking its gradient, we have
 $ \frac{\nabla s_q}{s_q} =  \frac{\nabla s_r}{s_r}$. As $\nabla s_q= \frac{\nabla f}{|\nabla f|} =\nabla s_r$, we get
  $s_q =s_r$ and then $C_q= C_r$.

We may set $s:=s_q$ and  $C:=C_q$ which are independent of $q$ and  $f= \frac{2}{3} \ln (s) + C$ near $\gamma((c,1])$.
As $|\nabla s| \equiv 1$, the oscillation of $s$ along $\gamma$ is less than or equal to the length of $\gamma$, which is finite.
So, $|\nabla f| = \frac{2}{3s}$ cannot be zero at $p$. From Lemma \ref{bb1b}, the Ricci-eigen functions of $g$ are $\lambda_{1} = \frac{2}{3s^2}$,  $\lambda_{2} = -\frac{2}{9s^2}$, $\lambda_{3}=\lambda_{4} = -\frac{4}{9s^2}$. So, $p$ shall stay in $M_{\mathcal{A}}$ by definition. Then $p \in K$. This contradiction implies that $M \setminus K $ is an empty set.

Proposition \ref{4dformc} shows that there are no other types than  {\rm (i)}-{\rm (iv)}.
This proves the theorem.
 \hspace{8.7cm}  $\blacksquare$

\medskip
We remark that the incomplete steady gradient soliton in Theorem \ref{local} {\rm (iii)} has negative scalar curvature, in contrast to the fact that complete steady gradient solitons should have nonnegative scalar curvature.

\medskip

As a Corollary to Theorem \ref{local}, we state a classification of 4-dimensional complete gradient Ricci solitons with harmonic Weyl curvature. The case  Theorem \ref{local} {\rm (iii)} can only yield an incomplete soliton. And for case {\rm (ii)}, when $g$ is complete and  locally isometric to $ \mathbb{R}^2 \times N_{\lambda}$,  its universal cover is isometric to $ \mathbb{R}^2 \times N_{\lambda}$.

\begin{thm} \label{claim112na5}
Let $(M, g, f)$ be a complete four dimensional gradient Ricci soliton $\nabla df = -Rc + \lambda g$ with harmonic Weyl curvature.
Then it is one of the following;

\medskip
{\rm (i)} $g$ is an Einstein metric with $f$ a constant function.

{\rm (ii)} $g$ is isometric to a finite quotient of $ \mathbb{R}^2 \times N_{\lambda}$
where $ \mathbb{R}^2$ has the Euclidean metric and $N_{\lambda}$ is a 2-dimensional Riemannian manifold of constant curvature ${\lambda} \neq 0$.   And $f = \frac{\lambda}{2} |x|^2$ modulo a constant on the Euclidean factor.

{\rm (iii)} $g$ is locally conformally flat.

\end{thm}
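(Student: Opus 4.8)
The plan is to deduce this classification directly from Theorem~\ref{local}, upgrading its \emph{local} conclusions to global ones with the help of completeness. By Theorem~\ref{local} together with Lemma~\ref{notwo}, a connected gradient Ricci soliton with $\delta W=0$ belongs globally to exactly one of the four types (i)--(iv) described there, so it suffices to examine each type in turn. Types (i) and (iv) require essentially no further argument: in type (i) the soliton is Einstein with $f$ constant, which is conclusion (i); in type (iv) the Weyl tensor vanishes on an open dense subset, hence $W\equiv 0$ on all of $M$ (the zero set of $|W|^2$ being closed, or by real analyticity), which is conclusion (iii).

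Next I would eliminate type (iii) using completeness. In that type one has $f=\tfrac{2}{3}\ln s + C$ with one and the same constant $C$ in every chart (cf.\ the proof of Theorem~\ref{local}, which also shows $M\setminus K=\emptyset$ there), so $s=\exp\!\bigl(\tfrac{3}{2}(f-C)\bigr)$ is a globally defined \emph{positive} smooth function on $M$ with $|\nabla s|\equiv 1$ and $\nabla s=E_1$. Since $\nabla_{E_1}E_1=0$ (Lemma~\ref{threesolb}), the integral curve of $-E_1$ through any point is a unit-speed geodesic along which $s$ decreases at unit rate; if $(M,g)$ were complete this geodesic would exist for all time and would drive $s$ to the value $0$, contradicting $s>0$. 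Hence type (iii) never arises for complete solitons. (This is consistent with $\lambda=0$ and $R=-\tfrac{4}{9s^2}<0$ here, whereas complete steady gradient solitons have $R\ge 0$.)

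The main case is type (ii). Here $g$ is locally $\mathbb{R}^2\times N_\lambda$, so its Ricci tensor is parallel with eigenvalues $\lambda$ and $0$, each of multiplicity two, and $TM$ splits orthogonally into two parallel $2$-plane distributions. Passing to the universal cover $\widehat M$ and applying the de~Rham decomposition theorem, $\widehat M$ is a Riemannian product $\mathbb{R}^2\times\widehat N$, where the flat factor, being complete and simply connected, is Euclidean $\mathbb{R}^2$, and $\widehat N$ is the complete simply connected surface of constant curvature $\lambda\neq 0$, i.e.\ $S^2$ for $\lambda>0$ and the hyperbolic plane for $\lambda<0$. The lift $\widehat f$ solves $\nabla d\widehat f=-Rc+\lambda g$ on $\widehat M$; the $\widehat N$- and mixed components of this equation force the $\widehat N$-component of $\nabla\widehat f$ to be parallel along $\widehat N$, hence to vanish (as $\widehat N$ admits no nonzero parallel vector field), so $\widehat f$ depends only on the Euclidean factor and, from the $\mathbb{R}^2$-component, equals $\tfrac{\lambda}{2}|x-x_0|^2$ modulo an additive constant. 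The deck group $G=\pi_1(M)$ acts on $\widehat M$ by isometries preserving $\widehat f$; since $\mathrm{Isom}(\mathbb{R}^2\times\widehat N)=\mathrm{Isom}(\mathbb{R}^2)\times\mathrm{Isom}(\widehat N)$ (the two de~Rham factors being non-isometric) and preservation of $\widehat f$ pins the point $x_0$ in the Euclidean factor, after centering we obtain $G\subset O(2)\times\mathrm{Isom}(\widehat N)$. For $\lambda>0$ this ambient group is compact, so the discrete group $G$ is finite; in general the analysis of this deck group (discrete, free, properly discontinuous, preserving $\widehat f$) exhibits $M$ as a finite quotient of $\mathbb{R}^2\times N_\lambda$, with $f=\tfrac{\lambda}{2}|x|^2$ modulo a constant on the Euclidean factor. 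This is conclusion (ii).

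The only step that demands care is type (ii): one must verify that $\widehat M$ splits \emph{globally} as $\mathbb{R}^2\times\widehat N$, that all isometries of this product respect the two factors, and that the constraint imposed by the globally defined potential $f$ (which singles out the center $x_0$ of the Euclidean factor) together with proper discontinuity of the deck group yields precisely the finite-quotient description. The other three cases drop out immediately from Theorem~\ref{local}, the only extra ingredient being the short geodesic-incompleteness argument ruling out type (iii).
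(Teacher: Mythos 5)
Your proposal is correct and follows essentially the same route as the paper: Theorem~\ref{local} reduces everything to the four local types, type (iii) is discarded because it is necessarily incomplete, and type (ii) is handled by passing to the universal cover, which splits as $\mathbb{R}^2\times N_\lambda$ by the de~Rham decomposition. The paper states these steps in two sentences without elaboration, so your geodesic argument for incompleteness of type (iii) and your deck-group discussion simply supply details the paper leaves implicit.
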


Complete locally conformally flat {\it steady} gradient Ricci solitons are  classified to be either flat or isometric to the Bryant soliton, in \cite{CC1, CM}. This result and Theorem \ref{claim112na5} yield  Theorem \ref{steady}.
We also understand better complete expanding gradient Ricci solitons  with harmonic Weyl curvature as in  Theorem \ref{expand}.

\medskip
As mentioned in the introduction, we can show the local classification of gradient Ricci soliton with {\it harmonic curvature}  as a corollary of Theorem \ref{local}.

\begin{cor} \label{har2}
Let $(M, g, f)$ be a (not necessarily complete) four dimensional gradient Ricci soliton satisfying $\nabla df = -Rc + \lambda g$ with harmonic curvature. Then it is locally one of the three types {\rm (i)}-{\rm (iii)} below;  for each point $p$, there exists a neighborhood $V$ of $p$  such that
$(V, g,f)$ can be one of the following;

{\rm (i)} $g$ is an Einstein metric and $f$ is constant.

{\rm (ii)}  $g$ is isometric to a domain in $ \mathbb{R}^2 \times N_{\lambda}$
where $ \mathbb{R}^2$ has the Euclidean metric and $N_{\lambda}$ is a 2-dimensional Riemannian manifold of constant curvature ${\lambda} \neq 0$.   And $f = \frac{\lambda}{2} |x|^2$ modulo a constant on the Euclidean factor.

{\rm (iii)} $g$ is  isometric either to a domain in the Gaussian soliton or to a domain in $ \mathbb{R} \times M_{\lambda}$ with the product metric, where $M_{\lambda}$ is a 3-dimensional Riemannian manifold of constant curvature $ \frac{ \lambda}{2} \neq 0$, and $f = \frac{\lambda}{2} |x|^2$ modulo a constant on the Euclidean factor.

\end{cor}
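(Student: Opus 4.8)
The plan is to deduce the corollary directly from Theorem \ref{local}. First I would record two consequences of harmonic curvature: it is equivalent to the Ricci tensor being a Codazzi tensor, so in particular $\delta W=0$; and tracing the identity $\nabla_i R_{jk}=\nabla_j R_{ik}$ against $g^{ik}$ and using the contracted second Bianchi identity yields $\nabla R=0$, i.e. the scalar curvature $R$ is locally constant. Thus $(M,g,f)$ is a $4$-dimensional gradient Ricci soliton with $\delta W=0$, and Theorem \ref{local} tells us that locally $g$ is of one of the four types {\rm (i)}--{\rm (iv)} there. Type {\rm (iii)} is excluded immediately because in that case $R=-\frac{4}{9s^{2}}$ is not locally constant (Lemma \ref{bb9c}); and types {\rm (i)} and {\rm (ii)} of Theorem \ref{local} are exactly types {\rm (i)} and {\rm (ii)} of the corollary. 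So the only real work is to sharpen type {\rm (iv)} to type {\rm (iii)} of the corollary.

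For that, assume $f$ is non-constant and $g$ is of type {\rm (iv)}. Near each point of $K:=M_{\mathcal{A}}\cap\{\nabla f\neq 0\}$ we have, by Proposition \ref{lcf1}, $g=ds^{2}+h(s)^{2}\tilde g$ with $\tilde g$ of constant curvature and $E_1=\nabla s=\nabla f/|\nabla f|$ a Ricci eigenvector with eigenvalue $\lambda_1=R_{11}=-3h''/h$. Since $R$ is constant, Lemma \ref{solitonformulas}{\rm (i)} gives $Rc(\nabla f,\cdot)=0$, hence $\lambda_1\equiv 0$ on $K$, hence $h''\equiv 0$ and $h$ is affine in $s$. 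I would then substitute $h''=0$ into the soliton equations (\ref{fh})--(\ref{fh2}). If $h'\equiv 0$, then $g=ds^{2}+h^{2}\tilde g$ is a Riemannian product $\mathbb{R}\times M^{3}$, (\ref{fh2}) forces $M^{3}$ to have constant curvature $\frac{\lambda}{2}$, and $f''=\lambda$ from (\ref{fh}); for $\lambda\neq 0$ we may absorb the affine part of $f$ by translating $s$ to get $f=\frac{\lambda}{2}s^{2}+\mathrm{const}$, while $\lambda=0$ gives a flat metric with $\nabla df=0$. If $h'\not\equiv 0$, after reparametrizing $s$ and rescaling $\tilde g$ I may take $h(s)=s$; then $f''=\lambda$ together with $f'=\lambda s+\frac{2-2k}{s}$ (from (\ref{fh2})) forces $k=1$, so $g=ds^{2}+s^{2}\tilde g$ is flat and $f=\frac{\lambda}{2}s^{2}+\mathrm{const}$, i.e. a piece of the Gaussian soliton. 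In all branches we have obtained, locally on $K$, a domain in $\mathbb{R}\times M_\lambda$ with $M_\lambda$ of constant curvature $\frac{\lambda}{2}\neq 0$, or a domain in the Gaussian soliton (the flat $\lambda=0$ case being the trivial flat soliton of Remark \ref{rk1}).

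Finally I would upgrade this to a statement valid near every point of $M$, not just on $K$. In each case above the Ricci tensor is parallel on $K$, hence on all of $M$ by density and real analyticity; so near a point $p_0\in M\setminus K$ the de Rham decomposition theorem, applied exactly as in the proof of Theorem \ref{local} for type {\rm (ii)} \cite[Section 8.3.1]{PP}, splits a neighborhood of $p_0$ isometrically as a product of constant-curvature factors, and solving $\nabla df=-Rc+\lambda g$ for $f$ on that product recovers the potential in the stated form (cf. Remark \ref{rk1} and \cite{PW}). The main obstacle is the case bookkeeping in type {\rm (iv)}: checking that the ODE system together with constancy of $R$ leaves only the rigid solutions, correctly isolating the degenerate flat branch, and then patching the local descriptions across $M\setminus K$. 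All of these, though, closely follow arguments already present in Sections 6--8 and in the proof of Theorem \ref{local}.
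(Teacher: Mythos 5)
Your proof is correct and follows essentially the same route as the paper: exclude Theorem \ref{local} {\rm (iii)} by constancy of the scalar curvature, then resolve type {\rm (iv)} by forcing $h''=0$ and solving the ODEs (\ref{fh})--(\ref{fh2}), which leaves only the product $\mathbb{R}\times M_{\lambda/2}$ and the Gaussian cases. The only differences are cosmetic: the paper obtains $f''=\lambda$ from Lemma \ref{solitonformulas} {\rm (ii)} and then $h''=0$ from (\ref{fh}) (rather than via $\lambda_1=0$ from Lemma \ref{solitonformulas} {\rm (i)}), and it leaves implicit the extension from the open dense set $M_{\mathcal{A}}\cap\{\nabla f\neq 0\}$ to all of $M$ that you spell out with parallel Ricci and de Rham splitting.
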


\begin{proof} In this proof we do not rely on Theorem 1.2 of \cite{PW} as it works for a complete soliton.

The soliton metric $ds^2 + s^{\frac{2}{3}} dt^2+ s^{\frac{4}{3}} \tilde{g}$ in Theorem \ref{local} {\rm (iii)} does not have constant scalar curvature, so does not have harmonic curvature.

Note that the above
{\rm (iii)} should come from  Theorem \ref{local} {\rm (iv)}, in which the metric is of the form $ g=ds^2 +    h(s)^2 \tilde{g},$
where $\tilde{g}$ has constant curvature.
Lemma \ref{solitonformulas}  {\rm (ii)} gives $R + |\nabla f|^2 - 2\lambda f = {\rm constant}$. We differentiate with the local variable $s$ where $|\nabla f| \neq 0$, and get
$2 f^{'} f^{''} = 2\lambda f^{'}$ since $R$ is constant. So,   $f^{''} = \lambda$.  From (\ref{fh}), $h^{''} =0$.
Either $h=a$ or $h=bs$ for constants $a, b \neq 0$ after shifting $s$ by a constant.

When $h=a$, from (\ref{fh2}) we get $ \frac{k}{a^2} =  \frac{ \lambda}{2}$. We have $g = ds^2 +  \tilde{g}$ where $ \tilde{g}$
 has constant curvature $ \frac{ \lambda}{2}$. And we may set $f= \frac{\lambda}{2} s^2+C$ by shifting $s$.  As $f$ is not constant, $\lambda \neq 0$.

When $h=bs$, using (\ref{fh2}) and $f^{''} = \lambda $ we obtain that $f^{'} = \lambda s$ and $k= b^2$. We get $f = \frac{1}{2}\lambda s^2 +C$ so that
$\lambda \neq 0$.
And $g = ds^2 +  s^2 \tilde{g}, $ where $ \tilde{g}$ has constant curvature $+1$.  This yields the Gaussian soliton.

\medskip
(As an alternative to settle {\rm (iii)},  the section 2.2 of \cite{CM} may be cited. But that section is based on the existence of a self-similar solution, which exists if the soliton metric is complete \cite{Z2}. Here the metric may be incomplete.)
\end{proof}

\begin{remark}
{\rm In Theorem \ref{local} {\rm (iii)} we have got a four-dimensional incomplete soliton. One may ask if there exist {\it complete} non-conformally-flat gradient Ricci solitons of dimension$\geq 5$ with harmonic Weyl curvature and  $\lambda \leq 0$.

There are a number of objects to study by extending our method; it would be interesting to characterize the higher dimensional gradient Ricci solitons with harmonic Weyl curvature as well as other Ricci solitons.
Of course, other geometric structures than solitons can also be approached by the method here. }
\end{remark}

\begin{remark}
{\rm  There are a number of literatures on {\it orbifolds} in the theory of Ricci flow, for instance
\cite{CWL, KL}. As our result is a local description, it is possible to state an orbifold version of
Theorem \ref{claim112na5}.}
\end{remark}

\begin{remark}
{\rm B.L. Chen proved a local version of Hamilton-Ivey type estimate for three dimension in \cite{Ch}, which has been extended to $W=0$ case by Zhang \cite{Z}.
From Theorem \ref{local}, one may ask if such a local version still holds when $\delta W=0$.
}
\end{remark}

\end{document}